\newcommand{\Z}{\mathbb{Z}}
\newcommand{\R}{\mathbb{R}}
\renewcommand{\L}{\mathsf{L}^2}
\renewcommand{\H}{\mathsf{H}}
\newcommand{\HS}{\mathcal{H}}
\newcommand\deps{\Lambda\e}
\renewcommand{\a}{\mathfrak{a}}
\newcommand{\A}{{\mathcal{A}}}
\newcommand{\Res}{{\mathcal{R}}}
\newcommand{\J}{{J}}
\newcommand{\dom}{\mathrm{dom}}
\newcommand{\supp}{\mathrm{supp}}
\newcommand{\eps}{\varepsilon}
\newcommand{\e}{_{\varepsilon}}
\newcommand{\ie}{_{i,\eps}}  
\newcommand{\al}{\alpha}
\newcommand{\ga}{\gamma}
\renewcommand{\d}{\,\mathrm{d}}
\newcommand{\ds}{\displaystyle}
\newcommand{\I}{\mathcal{I}}
\newcommand{\Id}{\mathrm{I}}
\newcommand{\cupl}{\bigcup\limits}
\newcommand{\suml}{\sum\limits}
\newcommand{\liml}{\lim\limits}
\newcommand{\wt}{\widetilde}
\newcommand{\ceq}{\coloneqq}
\theoremstyle{plain}
\newtheorem{theorem}{Theorem}[section]
\newtheorem*{theorem*}{Theorem}
\newtheorem{lemma}[theorem]{Lemma}
\newtheorem*{lemma*}{Lemma}
\theoremstyle{remark}
\newtheorem{remark}[theorem]{Remark}
\newtheorem*{remark*}{Remark} 
\newtheorem{example}[theorem]{Example}
\newtheorem*{example*}{Example} 
\theoremstyle{definition}
\numberwithin{equation}{section}
\begin{document}
\title
[Operator estimates for homogenization of the Robin Laplacian  in a perforated domain]
{Operator estimates for homogenization of the Robin Laplacian in a perforated domain}

\author[Andrii Khrabustovskyi]{Andrii Khrabustovskyi$^{1,2}$}
\address{$^1$ Department of Physics, Faculty of Science, University of
  Hradec Kr\'{a}lov\'{e}, Czech Republic} 
\address{$^2$ Department of Theoretical Physics,
Nuclear Physics Institute of the Czech Academy of Sciences, \v{R}e\v{z}, Czech Republic} 
\email{andrii.khrabustovskyi@uhk.cz}

\author[Michael Plum]{Michael Plum$^3$}
\address{$^3$ Institute of Analysis, Faculty of Mathematics, Karlsruhe Institute of Technology, Germany} 
\email{michael.plum@kit.edu}

\thispagestyle{empty}

\subjclass[2010]{35B27, 35B40, 35P05, 47A55}

\begin{abstract}
Let $\varepsilon>0$ be a small parameter. We consider the domain $\Omega_\varepsilon:=\Omega\setminus D_\varepsilon$, where $\Omega$ is an open domain in $\mathbb{R}^n$,   and $D_\varepsilon$ is a family of small balls  of the radius $d_\varepsilon=o(\varepsilon)$ distributed periodically with period $\varepsilon$. Let $\Delta_\varepsilon$ be the Laplace operator in  $\Omega_\varepsilon$ subject to the  Robin condition ${\partial u\over \partial n}+\gamma_\varepsilon u = 0$ with $\gamma_\varepsilon\ge 0$  on the boundary of the holes  and the Dirichlet condition on the exterior boundary. Kaizu (1985, 1989) and Brillard (1988) have shown that, under appropriate assumptions on $d_\varepsilon$ and $\gamma_\varepsilon$, the operator $\Delta_\varepsilon$ converges in the strong resolvent sense to the sum of the Dirichlet Laplacian in $\Omega$ and a constant potential. We improve this result deriving  estimates on the rate of convergence in terms of  $L^2\to L^2$ and $L^2\to H^1$ operator norms. As a byproduct we establish the estimate on  the distance between the spectra of the associated operators.  
\end{abstract}

\keywords{homogenization; perforated domain; norm resolvent convergence; operator estimates; spectral convergence; varying Hilbert spaces}

\maketitle

\section{Introduction}\label{sec:1}

We revisit one of the classical problems in homogenization theory -- homogenization of the Laplacian in a domain with a lot of tiny holes. For the Dirichlet Laplacian it is also  known as \emph{crushed ice problem}. Here we focus on holes with the Robin boundary conditions. 
In the introduction we  recall the setting of the problem along with some important known results, and then sketch the main outcomes of the present work.

\subsection{Homogenization of Robin Laplacian in perforated domains}\label{subsec:1:1}
Let $\eps>0$ be a small parameter.
We consider the following perforated domain:
$$\Omega\e\ceq \Omega\setminus \Big(\cupl_i \overline{D\ie}\Big).$$
Here $\Omega$ is a fixed open domain in $\R^n$ ($n\geq 2$), $D\ie\Subset\Omega$ are identical open balls of the radius $d\e$
distributed evenly in
$\Omega$ along an $\eps$-periodic lattice.  The domain $\Omega\e$ is given in Figure~\ref{fig1}. A more accurate 
description of $\Omega\e$ is postponed to the next section. In the following, we refer to the sets $D\ie$ as \emph{holes}.

In the present paper we focus on the case of a low concentration
of the holes, namely
\begin{gather}\label{Lambda}
\deps\ceq {d\e\over \eps}\to 0\text{ as }\eps\to 0.
\end{gather}

We consider the following boundary-value problem in $\Omega\e$:
\begin{gather}\label{BVP:e}
\begin{cases}
-\Delta u\e + u\e =f&\quad\text{in }\Omega\e,\\
\ds{\partial u\e\over\partial n} +\gamma\e u\e=0&\quad\text{on }\cup_i\partial  D\ie,\\
u\e=0&\quad\text{on }\partial\Omega.
\end{cases}
\end{gather}
Here $f\in\L(\Omega)$ is a given function, ${\partial\over\partial n}$ stands for the derivative along the
exterior unit normal  to the boundary $\partial\Omega\e$, and the  constant $\gamma\e$ satisfies $0\leq \gamma\e<\infty$. 
Homogenization theory is aimed to describe the behavior of the solution $u\e$ to this problem as $\eps\to 0$.

\begin{figure}[h]
\center{
\begin{picture}(230,160)
\includegraphics[width=0.5\textwidth]{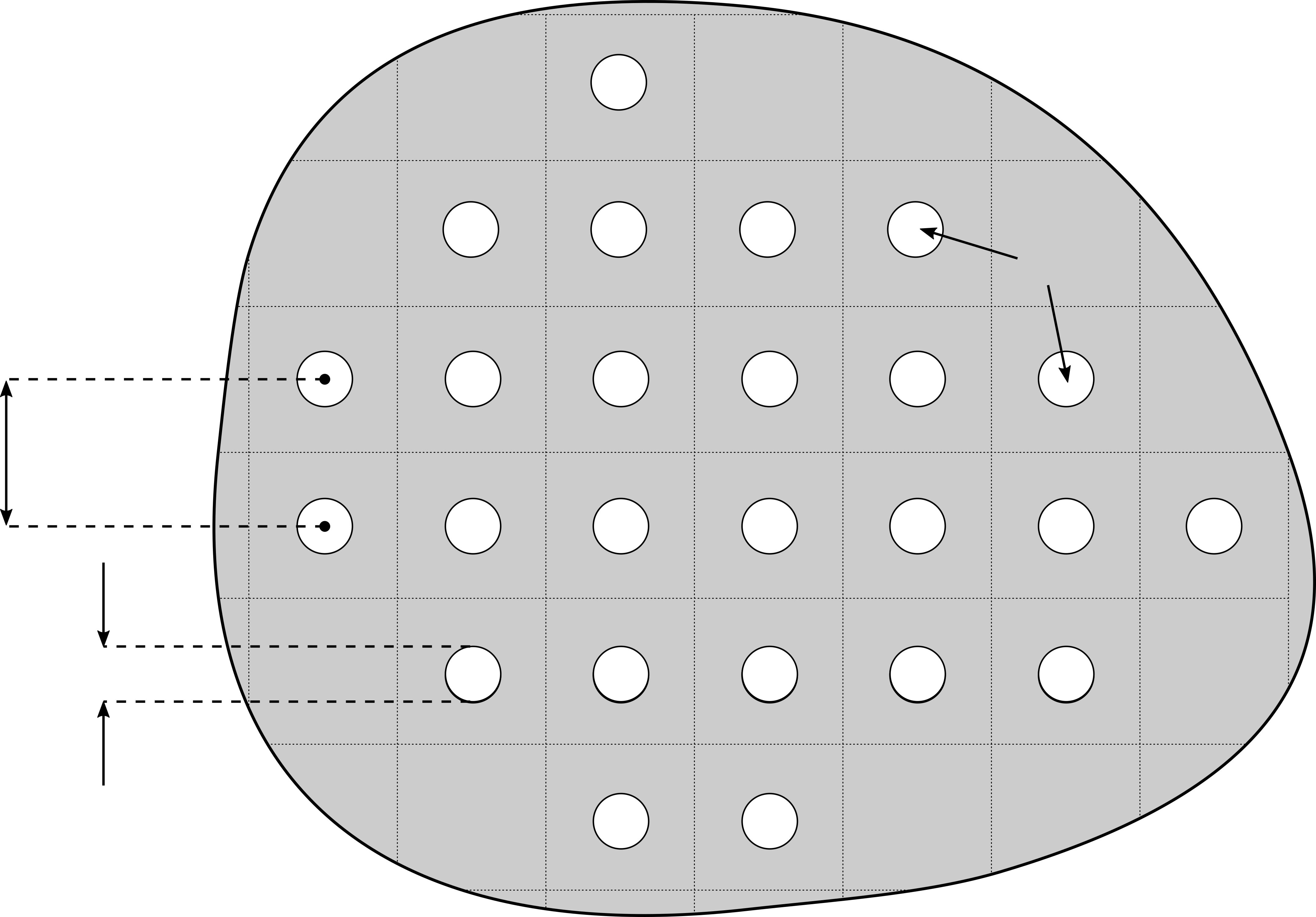}

\put(-220,37){$2d\e$}
\put(-227,75){$\eps$}
\put(-48,110){$D\ie$}

\end{picture}}
\caption{The domain $\Omega\e$}\label{fig1}
\end{figure} 
 
In was demonstrated in \cite{Ka89,Br88} that the asymptotic behavior of $u\e$ is determined by the asymptotic  
behaviour of two quantities $P\e\ge 0$ and $Q\e>0$ given by
\begin{gather}\label{peqe}
\ds
{P\e}\ceq  \varkappa_n{\ga\e d\e^{n-1}\over \eps^{n}},\qquad
{Q\e}\ceq 
\begin{cases}\ds
  (n-2)\varkappa_n{d\e^{n-2}\over \eps^{n}},&n\ge 3,\\[2mm]\ds
 {2\pi \over |\ln d\e| \eps^2},&n=2,
\end{cases}
\end{gather}
with $\varkappa_n$ standing for the surface area of the unit sphere in $\R^n$.
We define
\begin{gather}
\label{pq}
{P}\ceq \lim_{\eps\to 0}P\e,\quad  {Q}\ceq \lim_{\eps\to 0} Q\e,\quad P,Q\in [0,\infty]
\end{gather}
(here we assume that both limits in \eqref{pq}, either finite or not, exist).
Two cases are possible \cite{Ka89,Br88}:
if $P=\infty$ and $Q=\infty$, then 
\begin{gather}
\label{scenario:1}
\left\|u\e\right\|_{\L(\Omega\e)}\to 0\quad\text{as}\quad\eps\to 0,
\end{gather}
otherwise, if  $P<\infty$ or $Q<\infty$, then
\begin{gather}
\label{scenario:2}
\left\|u\e-u\right\|_{\L(\Omega\e)}\to 0\quad\text{as}\quad\eps\to 0,
\end{gather}
where $u$ is the solution to the problem
\begin{gather}\label{BVP:0}
\begin{cases}
-\Delta u  + Vu + u =f&\text{in }\Omega,\\
u=0&\text{on }\partial\Omega,
\end{cases}
\end{gather}
with the constant potential $V$ defined by 
\begin{gather*}
V\ceq 
 \left\{
\begin{array}{llcl}
P,&P>0&\text{and}&Q=\infty,\\[1mm]
Q,&P=\infty&\text{and}&Q>0,\\[1mm]
PQ(P+Q)^{-1},&P>0&\text{and}&Q>0,\\[1mm]
0,&P=0&\text{or}&Q=0.
\end{array}
\right.
\end{gather*}
It is easy to see that
\begin{gather}
\label{Ve}
V=\liml_{\eps\to 0}V\e,\quad\text{where }
V\e\ceq {P\e Q\e\over P\e+Q\e }. 
\end{gather}

\begin{example}\label{example:1}
Let $n\geq 3$. Let 
$ d\e=\eps^s$ with $s>1$ (this restriction is caused by the assumption \eqref{Lambda}), $\gamma\e=\eps^t$ with $t\in\R$. 
Then
$P\e=\varkappa_n\eps^{t+(n-1)s-n}$ and $Q\e=(n-2)\varkappa_n\eps^{(n-2)s-n}$.
In Figure~\ref{fig2} we sketch five subsets
of the set of admissible parameters $\{(s,t)\in\R^2:\ s>1\}$. If $(s,t)$ belongs to the dark gray area (respectively, the light gray area), then \eqref{scenario:1} holds (respectively, \eqref{scenario:2}--\eqref{BVP:0} with $V=0$ hold). If $(s,t)$ is on the dashed bold open interval (respectively, on the solid bold open ray), one has \eqref{scenario:2}--\eqref{BVP:0} with $V=\varkappa_n$ (respectively, with $V=(n-2)\varkappa_n$). Finally, we have \eqref{scenario:2}--\eqref{BVP:0} with $V=\varkappa_n{n-2\over n-1}$ provided
$s=-t={n\over n-2}$.

\begin{figure}[h]
\center{
\begin{picture}(300,150)
\includegraphics[width=0.42\textwidth]{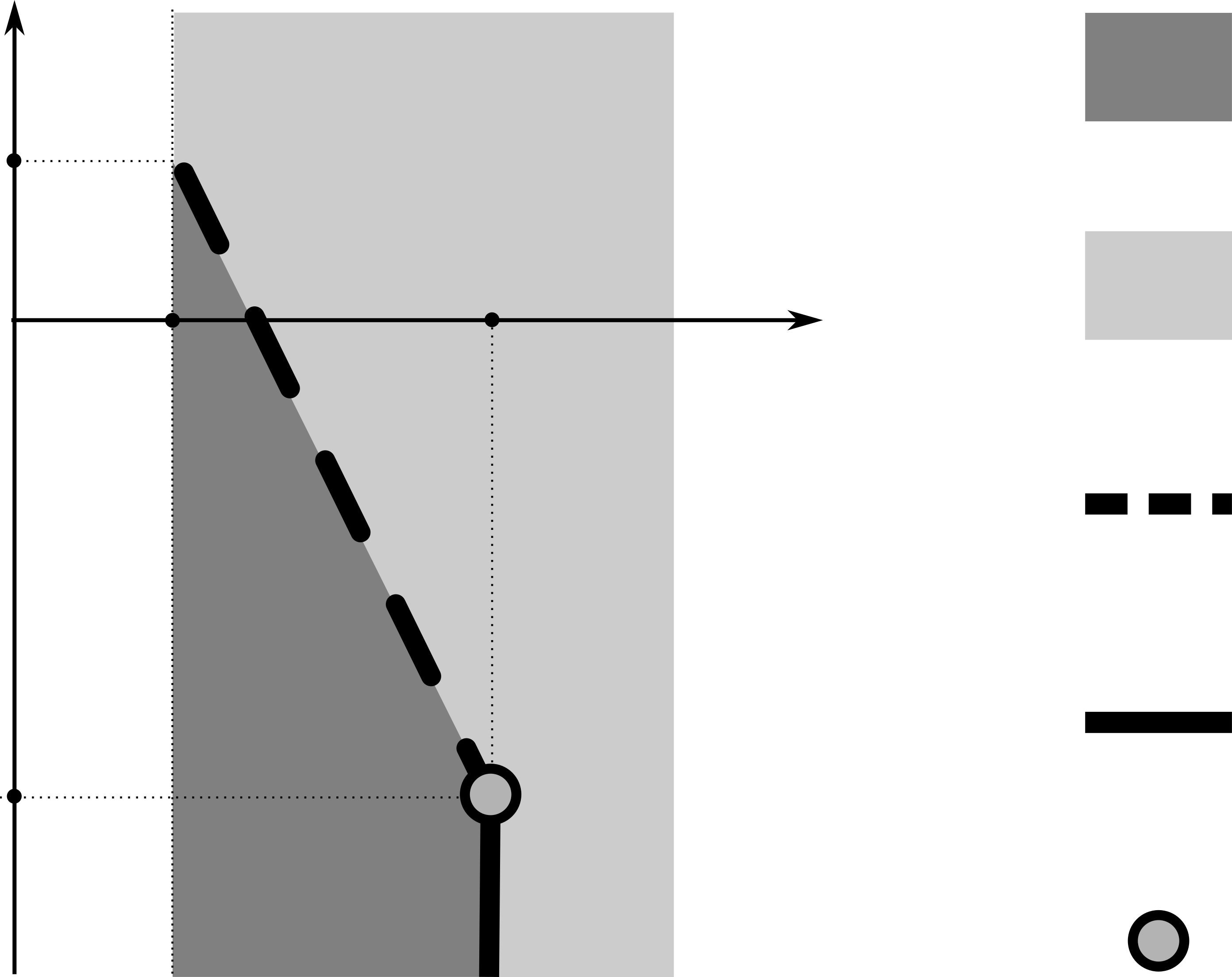}
\put(10,135){$P=\infty$ and $Q=\infty$}
\put(10,102){$P=0$ or $Q=0$} %\put(100,102){$\Longrightarrow$ $V=0$}
\put(10,70){$P>0$ and $Q=\infty$}%\put(100,70){$\Longrightarrow$ $V=p$}
\put(10,36){$P=\infty$ and $Q>0$}%\put(100,36){$\Longrightarrow$ $V=q$}
\put(10,3){$P>0$ and $Q>0$}  %\put(100,3){$\Longrightarrow$ $V=pq(p+q)^{-1}$}

\put(-165,87){$^1$}
\put(-130,87){${n\over n-2}$}
\put(-159,87){${n\over n-1}$}
\put(-203,23){${-n\over n-2}$}
\put(-190,118){$^1$}

\put(-190,141){$^t$}
\put(-75,89){$^s$}

\end{picture}}
\caption{Example~\ref{example:1}\label{fig2}}
\end{figure}

\end{example}

For the case
$(\ga\e=\gamma\,\wedge\, P>0)$ 
the above result was obtained by Kaizu in \cite{Ka85a}; in Figure~\ref{fig2} it corresponds to the point $ ({n\over n-1},0)$. The case $(Q>0\,\wedge\,P>0)$ was studied in \cite{Ka85b,Ka86} for $n=3$.
All other cases were investigated by Brillard in \cite{Br88} and, using another methods, by  Kaizu  in \cite{Ka89}. Actually, the articles \cite{Ka86,Br88,Ka89} dealt with the  holes of 
the form $D\ie\cong d\e D$, where the  set $D$ may have an arbitrary shape (i.e., it is not necessary a ball).
As for the  ball-shaped holes, 
$u\e$ converges to zero if $P=Q=\infty$;
otherwise, if $P<\infty$ or $Q<\infty$, $u\e$ converges to the solution $u$ of the boundary value 
problem \eqref{BVP:0} with an appropriately modified constant potential $V$.
Namely,
if $P>0$ and $Q=\infty$, then $V=|\partial D|\varkappa_n^{-1}P$, where $|\partial D|$ is the surface area of $D$; 
if $P=\infty$ and $Q>0$, then $V=\mathrm{cap}(D)\varkappa_n^{-1}(n-2)^{-1}Q$ if $n\ge3$, where $\mathrm{cap}(D)$ is the Newton capacity of $D$ (see, e.g., \cite{T96} for its definition),  and $V=Q$ if $n=2$; 
if $P>0$ and $Q>0$, then $V$ is the minimum of some capacity-type functional depending on $P,Q$ and $D$ (cf.~\cite[Proposition~3.3]{Br88}); if either $P=0$ or $Q=0$, then  $V=0$ as before. 
In the current work, in order to make the presentation simpler, we 
restrict ourselves to the ball-shaped holes.

The same homogenization problem with non-periodically distributed holes or/and with a non-constant $\gamma\e$ was studied, e.g., in \cite{Ka90,BG90,DGSZ19}. Qualitatively, the main result in these papers remains the same as in  \cite{Ka89,Br88}, but
the potential $V$ in \eqref{BVP:0} may be non-constant.  

Earlier, Marchenko and Khruslov~\cite{MK64}, and
Cioranescu and Murat~\cite{CM82}
considered a similar problem for  $\gamma\e=\infty$ which  
corresponds to 
Dirichlet conditions $u\e=0$ on $\partial D\ie$.
In these articles quite general shapes and distributions of holes are allowed.
Notably, the result is similar to the one for Robin conditions, and for identical ball-shaped holes distributed $\eps$-periodically
it reads as follows: if $Q=\infty$, then \eqref{scenario:1} holds, 
otherwise one has \eqref{scenario:2}--\eqref{BVP:0} with $V=Q$.
The cases $Q=0$ and $Q=\infty$ were also considered in \cite{RT75}, moreover,
the authors investigated a similar problem for randomly distributed holes under assumptions resembling  
$Q>0$.

Despite in the present paper we deal with holes which are asymptotically smaller than the period (see~\eqref{Lambda}),
in this introduction we would like to devote some attention  to the case when they are of the same order. 
Namely, let
$d\e=r\eps$ with $r\in (0,\frac12)$.  
We define $P,Q$ as in \eqref{pq}, \eqref{peqe}; 
clearly, $Q=\infty$. Like in the case \eqref{Lambda},
$u\e$ converges to zero (cf.~\eqref{scenario:1}) provided $P=\infty$.
Otherwise, if $P<\infty$, $u\e$ converges in the sense \eqref{scenario:2} to  $u\in \H^1_0(\Omega)$ 
satisfying
\begin{gather*}
-\mathrm{div}(A \nabla u)  + Pu + Bu = Bf.
\end{gather*}
Here   $B>0$ is the measure of the set $\overline{\square\setminus B_{r}}$,
where $\square$ is the unit cube, and $B_r$ is the ball of  radius $r$, both having the same center;
$A\gg 0$ is a constant matrix,
whose entries are calculated by solving a certain boundary value problem on $\square\setminus \overline{B_r}$. 
For $\gamma\e=0$ (Neumann holes) this result was obtained in \cite{CS79},
for $\gamma\e>0$ (Robin holes) -- in 
\cite{Br88,Ka89}. Similar results for more general
shapes and distributions of holes can be found, e.g., in \cite{Kh79,BG90}.

It worth to mention that in \cite{Ka89} (see also the earlier articles \cite{Ka85b,Ka86})  
not only linear, but also 
\emph{non-linear} Robin boundary conditions ${\partial u\e\over\partial n} + \gamma\e g(x,u\e)=0$ were treated. 
In this case a nonlinear term $V(u)$   appears in the limiting equation. 
After \cite{Ka89},  a huge number of articles concerning non-linear Robin problems in perforated domains appeared. 
Not pretending to present an exhaustive overview of these articles here, we refer only to some of them -- \cite{Go95,DGSZ17,GK16,GPS12,MS09}, more results and references can be found in the recent monograph \cite{DGS21}.

Finally, 
 one can also study a \emph{surface distribution of holes}, i.e., holes
  being located near some hypersurface $\Gamma$ intersecting $\Omega$.
  For the first time this problem was considered in~\cite{MK64} for  Dirichlet holes.  
  Robin holes case was treated  in \cite{LOPS97}; the complete analysis including non-linear case can be found in \cite{GPS12}.  
  The result reads as follows: 
the solution $u\e$ to the problem \eqref{BVP:e} converges in $\L(\Omega\e)$ to the function $u$   
  (i.e., \eqref{scenario:2} holds), where $u\in\H^1_0(\Omega)$ satisfies 
  $-\Delta u + u = f$ in $\Omega\setminus\Gamma$ and certain interface conditions on $\Gamma$. 
 
\subsection{Main results}\label{subsec:1:3}

{
In all mentioned above papers 
concerning homogenization of the linear boundary value problems in domains with a lot of tiny holes
(cf.~
\cite{Br88,Ka89,Ka90,BG90,DGSZ19,MK64,CM82,LOPS97,GPS12,Ka85a,Kh79})
the convergence result  was established for the fixed right-hand-side $f\in\L(\Omega)$.
In the language of operator theory this means that one has  \emph{strong resolvent convergence}  
 of the differential operators associated with the boundary value problems \eqref{BVP:e} and \eqref{BVP:0} \footnote{Strictly
speaking, we are not able to treat the classical resolvent convergence,
since the operators act in different Hilbert spaces $\L(\Omega\e)$ and $\L(\Omega)$. 
Nevertheless, using the operator $\J\e f\ceq  f\restriction_{\Omega\e}$,
one gets its natural analogue for varying domains $\Omega\e
\subset \Omega$, see Section~\ref{sec:2}.}. 
In the current work we improve \eqref{scenario:1}--\eqref{scenario:2}, by proving the 
uniform convergence with respect to
$f\in \{g\in\L(\Omega):\  \|g\|_{\L(\Omega)}=1\}$. In another words, we establish  \emph{norm resolvent convergence} of the underlying operators. Moreover, we estimate the rate of this 
convergence (the so-called \emph{operator estimates}). }

Recall that $u\e$ and $u$ stand for the solutions to the problems \eqref{BVP:e} and \eqref{BVP:0}, respectively.
Our first result reads as follows. Let $P<\infty$ or $Q<\infty$. Then one has the estimate
\begin{gather}\label{NRC:1}
\|u\e-u\|_{\L(\Omega\e)}\leq C\eta\e \|f\|_{\L(\Omega)},\quad
\eta\e\to 0\text{ as }\eps\to 0.
\end{gather}
Here $C$ is a positive constant independent of $\eps$, and 
$\eta\e$ is given below in \eqref{eta}. 
As a byproduct of \eqref{NRC:1}, we obtain the estimate for the distance between the spectra of the underlying operators.

Besides the convergence in the $\L(\Omega)\to \L(\Omega\e)$ operator norm,
in some cases (see~\eqref{goodPQ})
we also prove the convergence in the $\L(\Omega)\to \H^1(\Omega\e)$ operator norm. Namely,
we derive the estimate
\begin{gather}\label{NRC:2}
\|u\e-u\|_{\H^1(\Omega\e)}\leq 
C\eta\e'\|f\|_{\L(\Omega)},\text{ where }\eta\e'\ceq \max\left\{V\e Q\e^{-1/2};\,\eta\e\right\}
\end{gather} 
(recall that $Q\e$ and $V\e$ are given in \eqref{peqe} and \eqref{Ve}, respectively).
Evidently, one has  
\begin{gather}
\label{Vpq}
V\e Q\e^{-1/2}\leq \min\{Q\e^{1/2};\, P\e Q\e^{-1/2}\},
\end{gather} 
whence the right-hand-side of \eqref{NRC:2} converges to zero provided either
\begin{gather}\label{goodPQ}
(P<\infty\ \wedge\ Q=\infty)\quad\text{or}\quad(P=0\ \wedge\ Q>0)\quad\text{or}\quad Q=0.
\end{gather}
To get a reasonable estimate in the remaining case $(P\not= 0\, \wedge \, Q>0)$ we need a special corrector;
the corresponding result reads as follows:
\begin{gather*}
\|u\e-(1+G\e)u \|_{\H^1(\Omega\e)}\leq C\wt\eta\e\|f\|_{\L(\Omega)},\
\quad\wt\eta\e\to 0.
\end{gather*}
Here $G\e$ is a smooth function given in \eqref{Ge},
while $\wt\eta\e$ is defined in \eqref{wtdeltan}.

In the last section we also discuss possible improvements of 
  \eqref{NRC:1}--\eqref{NRC:2} for $P,Q$ satisfying
\begin{equation}\label{spec.case}
(P<\infty\ \wedge\ Q=\infty)\quad\text{or}\quad(P=0\ \wedge\ Q>0).
\end{equation}

Finally, we derive the estimate
\begin{gather}\label{NRC:4}
\|u\e\|_{\L(\Omega\e)}\leq 
C\max\left\{P\e^{-1};\,Q\e^{-1};\,\eps^2\right\}\|f\|_{\L(\Omega)},
\end{gather}
giving the required convergence result if $P=Q=\infty$.

The main results are collected in Theorems~\ref{th1}, \ref{th2}, \ref{th3}, \ref{th4}, \ref{th5} and \ref{th2a}. 
We formulate them in operator terms.
Our proofs (except the one of \eqref{NRC:4}) rely on the abstract results from \cite{KP21,AP21,P06} 
concerning the resolvent and spectral convergence   in varying Hilbert spaces.

\subsection{Previous works on operator estimates in homogenization}\label{subsec:1:2}

Operator estimates in homogenization theory is a rather young
  topic. It was initiated by  Birman and  Suslina \cite{BS04,BS07},  Griso \cite{G04,G06}, Zhikov and  Pastukhova \cite{Z05a,Z05b,ZP05} (see also the overview \cite{ZP16}) 
  for   
  the classical homogenization problem concerning elliptic
  operators of the form
  $\mathrm{div}\left(A (\frac . \eps)\nabla\right)$,
   where $A(\cdot)$ is a $\mathbb{Z}^n$-periodic function.

For Dirichlet holes ($\gamma\e=\infty$) an operator estimate was established 
by the first author and Post in \cite{KP18} under the assumption $Q<\infty$. In a subsequent work
 Ann\'e and Post \cite{AP21} derived operator estimates 
for the Laplace-Beltrami operator on a Riemannian manifold with Dirichlet holes 
satisfying the conditions resembling either the case  $Q=0$ or the case $Q=\infty$,
and for Neumann holes satisfying (a kind of) condition \eqref{Lambda}.
For the Neumann Laplacian in periodically perforated domains with holes of the same
smallness order as the period operator estimates were obtained in \cite{Z05b,Su18}. 
We also mention the work \cite{CDR18}, where
the norm resolvent convergence without estimates on its rate was demonstrated for three particular cases: 
Neumann holes ($\gamma\e=0$) satisfying \eqref{Lambda},
Dirichlet holes satisfying $Q\e=Q>0$, and
Robin holes with $\gamma\e=\gamma$, $d\e=\eps^{n/(n-1)}$.

For the surface distribution of holes (see the end of the previous subsection) some operator estimates
were derived in  \cite{BCD16,GPS13}. 
In \cite{GPS13} operator estimates were obtained 
for the Robin Laplacian in a bounded domain $\Omega\subset\mathbb{R}^3$ with a lot of small
 holes located in a neighborhood of a plane intersecting $\Omega$; we note that
 the restrictions on $\Omega$ (boundedness) and on the dimension ($n=3$) 
played an important role in the proofs in \cite{GPS13}.
In   \cite{BCD16}
operator estimates were derived for
elliptic operators posed in a two-dimensional domain with a lot of small
 holes located in a neighborhood of a  curve; various boundary conditions were treated, in particular, the Robin conditions with $\eps$-independent coupling constant $\gamma$.

Finally, we mention the closely related papers  
\cite{BBC10} and \cite{BCFP13}, where operator estimates were deduced, respectively,
  for elliptic operators with frequently alternating boundary
  conditions and for boundary value problems in domains with
  oscillating boundary.

Let us stress that, in contrast to \cite{Br88,Ka89},  in this paper we do not assume that  $\Omega$ is bounded. We also have no restrictions on the dimension $n$ as in \cite{BCD16,GPS13}.

\section{Setting of the problem and main results}\label{sec:2}

Let $\Omega $ be an open domain in $\R^n$,  $n\geq 2$.
We assume that $\partial\Omega$ is $\mathsf{C}^2$ smooth, moreover, if $\Omega$ is unbounded,
we additionally require it to be
\emph{uniformly regular of class $\mathsf{C}^2$} in the sense of Browder \cite[Definition~1]{Br61}. 
The latter  is automatically fulfilled, for example, for domains with compact smooth boundaries or for compact smooth perturbations of half-spaces.
This requirement is merely technical, and is needed solely to enjoy the global $\H^2$-regularity of solutions to the homogenized 
problem.  If $\Omega$ is bounded, our results remain valid under less restrictive
assumptions on $\partial\Omega$, see Remark~\ref{remark:smooth} at the end of Section~\ref{sec:5}.

Let $\eps$ be a positive parameter.
We assume that $\eps$ is small enough, namely $\eps\in (0,\eps_0]$,
where $\eps_0\in (0,1)$ will be specified later; see \eqref{de}--\eqref{lnln}. 
We denote  
$$
\begin{array}{l}
\square\ceq (-1/2,1/2)^n,\qquad
\square\ie\ceq \eps(\square+i),\ i\in\Z^n,\qquad
\I\e\ceq\left\{i \in \Z^n:\ \square\ie \subset \Omega\right\},
\end{array}
$$
i.e., $\I\e$ is the set of those indices $i\in\Z^n$ for which the corresponding cell
$\square\ie$ lies entirely in $\Omega$.
For $i\in\I\e$ we define $x\ie\ceq i\eps$ and
\begin{gather*}
D\ie\ceq \left\{x\in\R^n:\ |x-x\ie|<d\e\right\}, 
\end{gather*}
where
$d\e\in (0,{\eps/2})$.
Finally,  we set  
\begin{gather*}
  \Omega\e \ceq \Omega \setminus\left( \bigcup_{i \in \I\e}\overline{ D\ie}\right).
\end{gather*} 
The domain $\Omega\e$ is depicted on Figure~\ref{fig1}.

Next we describe the family of operators $\mathcal{A}\e$ which  will be the main object of our interest. Let $\gamma\e\ge 0$.
We   introduce the sesquilinear form $\a\e$ in the Hilbert space $\L(\Omega\e)$  by
\begin{equation}
\label{ae}
\a\e[u,v]=
\ds\int_{\Omega\e}\nabla u\cdot\overline{\nabla v} \d x+
\sum_{i\in\I\e}\gamma\e\int_{\partial D\ie}u \overline{v}\d s,\quad 
\dom(\a\e)=\left\{u\in \H^1(\Omega\e):\, u\restriction_{\partial\Omega}=0\right\}.
\end{equation} 
where $\d s$ is the surface measure on ${\partial D\ie}$.
The second term in \eqref{ae} is indeed finite for $u\in\H^1(\Omega\e)$ which follows easily from  the trace inequality 
\begin{gather*}
\|u\|_{\L(\partial D\ie)}\leq  
C\e \|u\|_{\H^1(\square\ie\setminus\overline{D\ie})},\ u\in\H^1(\Omega\e),
\end{gather*}
where the constant $C\e>0$ is independent of $i\in\I\e$.
Furthermore, it is also straightforward to check that the form 
$\a\e[u,v]$ is symmetric, densely defined, closed, and positive. Then, by the first representation theorem \cite[Chapter 6, Theorem 2.1]{Ka66}, there exists the unique self-adjoint and positive operator $\A\e$ associated with $\a\e$,
i.e., $\dom(\A\e)\subset\dom(\a\e)$ and
\begin{gather*}
(\A\e u,v)_{\L(\Omega\e)}= \a\e[u,v],\quad\forall u\in
\dom(\A\e),\ \forall  v\in \dom(\a\e).
\end{gather*}
The operator $\A\e$ is the Laplace operator in $\Omega\e$ subject to the Dirichlet conditions
on $\partial\Omega$ and the Robin conditions ${\partial u\over \partial n}+\gamma_\varepsilon u = 0$ on 
the boundary of the holes $D\ie$, $i\in\I\e$. The solution to the boundary value problem \eqref{BVP:e} is given by 
$u\e=(\A\e+\Id)^{-1} f\e$, where $f\e\ceq f\restriction_{\Omega\e}$.

Recall that 
$P\ceq \liml_{\eps\to 0}P\e$, $Q\ceq \liml_{\eps\to 0}Q\e$, $V\ceq \liml_{\eps\to 0}V\e,$
where $P\e$, $Q\e$ are defined in \eqref{peqe}, and $V\e$ is given in \eqref{Ve}.
\smallskip

\emph{We start from the case when either $P<\infty$ or $Q<\infty$.}
At first, we introduce the limiting operator $\A$.
Let $\a$ be a sesquilinear form in $\L(\Omega)$  given by
\begin{gather}\label{a}
{\a}[u,v]=\int_{\Omega} \nabla u\cdot \overline{\nabla v}\d x +
V\int_{\Omega}  u\overline{v}\d x,\quad
\dom(\a)=\H^1_0(\Omega).
\end{gather}
We denote by $\A$ the self-adjont  operator associated with this form.
Evidently, $$\A=-\Delta_{\Omega}  +V,$$ 
where $\Delta_\Omega$ is the Dirichlet Laplacian on $\Omega$.
Standard theory of elliptic PDEs (see, e.g., \cite{Ev98}) yields 
$\dom(\A)\subset\H^2_{\rm loc}(\Omega)$. Furthermore, due to the uniform regularity of $\Omega$ \cite{Br61}, 
one has a global $\H^2$-regularity of functions from $\dom(\A)$, namely,
$\dom(\A)=\H^2(\Omega)\cap\H^1_0(\Omega)$.

By  $\J\e$ we denote the operator of restriction to $\Omega\e$, i.e.,  
\begin{gather}\label{J}
\J\e:\L(\Omega)\to \L(\Omega\e),\quad \J\e f\ceq f\restriction_{\Omega\e}.
\end{gather}
Note that $\J\e$ can also be regarded as an operator from $\H^1(\Omega)$ to $\H^1(\Omega\e)$.

Let us specify the range for the small parameter $\eps$. 
In view of \eqref{Lambda}, \eqref{pq} and the equality $|\ln d\e|^{-1}=(2\pi)^{-1} Q\e \eps^{2}$, which holds for $n=2$, there exists  $\eps_0\in (0,1)$ such  that
\begin{gather}
\label{de}
{\sup} _{\eps\in (0,\eps_0]} \Lambda\e\leq 1/4,\\
\label{pq+} 
\begin{cases}
\sup_{\eps\in (0,\eps_0]}P\e<\infty,&\text{provided }P<\infty,\\
\sup_{\eps\in (0,\eps_0]}Q\e<\infty,&\text{provided }Q<\infty,\\
\sup_{\eps\in (0,\eps_0]}P^{-1}\e<\infty,&\text{provided }P=\infty,\\
\sup_{\eps\in (0,\eps_0]}Q^{-1}\e<\infty,&\text{provided }Q=\infty,\\
\inf_{\eps\in (0,\eps_0]}Q\e>0,&\text{provided }Q>0,
\end{cases}
\\
\label{lnln}
{\sup} _{\eps\in (0,\eps_0]} \frac{|\ln\eps|}{|\ln d\e|}\leq 1/2,\text{ provided }n=2\text{ and }Q>0.
\end{gather}
In the following we always assume that $$0<\eps\le\eps_0.$$
Note that $V\e\leq\min\{P\e;\, Q\e\}$,  whence \eqref{pq+} implies
\begin{gather}\label{V+} 
{\sup}_{(0,\eps_0]}V\e<\infty\text{ if  }P<\infty\text{ or }Q<\infty.
\end{gather}

Finally, we define 
\begin{gather}
\label{eta}
\eta\e\ceq
\begin{cases}
\max\left\{|V\e-V|;\,\eps;\,\deps\right\},& n\ge 5,\\[1mm]
\max\left\{|V\e-V|;\,\eps;\,\deps|\ln\deps|\right\},&n=4,\\[1mm]
\max\left\{|V\e-V|;\,\eps;\,\deps^{1/2} \right\},& n= 3,  \\[1mm]
\max\left\{|V\e-V|;\,\eps|\ln\eps|;\,|\ln\deps|^{-{1/2}}\right\},& n=2,
\end{cases}
\end{gather}  
where  $\Lambda\e$ is given in \eqref{Lambda}. 
Due to \eqref{Lambda} and \eqref{Ve}, $\eta\e\to 0$ as $\eps\to 0$.

We are now in position to formulate the   main results of this work.
In the following the notation  $\|\cdot\|_{X\to Y}$ stands  for the  norm of a bounded linear operator acting between Hilbert spaces $X$ and $Y$. By $C$ we denote generic positive constants being independent of $\eps$  (but it may depend on  $n$,  $\Omega$,  $\eps_0$, and on the values of the suprema and the infimum in \eqref{pq+}). 
We denote by $\Id$ the identity operator, either in $\L(\Omega)$ or in $\L(\Omega\e)$.

\begin{theorem}\label{th1}
Let $P<\infty$ or $Q<\infty$.
Then one has  
\begin{gather}\label{th1:est}
\left\|(\A\e+\Id)^{-1}\J\e  - \J\e(\A+\Id)^{-1}\right\|_{\L(\Omega)\to \L(\Omega\e)}\le C \eta\e.
\end{gather} 
\end{theorem}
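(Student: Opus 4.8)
The plan is to compare the two resolvents via a variational (energy) argument in the varying Hilbert space framework of \cite{KP21,AP21,P06}. Write $u\e \ceq (\A\e+\Id)^{-1}\J\e f$ and $u \ceq (\A+\Id)^{-1} f$ for an arbitrary $f\in\L(\Omega)$ with $\|f\|_{\L(\Omega)}=1$; because $\A\e+\Id$ is coercive with lower bound $1$, it suffices to estimate the linear functional
\begin{gather*}
\a\e[u\e - \J\e u, v] + (u\e - \J\e u, v)_{\L(\Omega\e)}
= (f, v)_{\L(\Omega\e)} - \a\e[\J\e u, v] - (\J\e u, v)_{\L(\Omega\e)}
\end{gather*}
uniformly over $v\in\dom(\a\e)$ with $\|v\|_{\H^1(\Omega\e)}\le 1$, and then take $v = u\e - \J\e u$ (or rather an appropriately modified test function, see below). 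The first term $(f,v)_{\L(\Omega\e)}$ is rewritten using the limiting equation \eqref{BVP:0}: since $u$ solves $-\Delta u + Vu + u = f$ in $\Omega$ and $\dom(\A)=\H^2(\Omega)\cap\H^1_0(\Omega)$, integration by parts on $\Omega\e$ gives
\begin{gather*}
(f,v)_{\L(\Omega\e)} = \int_{\Omega\e}\nabla u\cdot\overline{\nabla v}\d x + V\int_{\Omega\e} u\overline v\d x + \int_{\Omega\e} u\overline v\d x - \sum_{i\in\I\e}\int_{\partial D\ie}\frac{\partial u}{\partial n}\,\overline v\d s,
\end{gather*}
where the boundary term on $\partial\Omega$ vanishes because $v\restriction_{\partial\Omega}=0$ (the sign on the hole boundaries reflecting that $n$ points into the holes). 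Subtracting $\a\e[\J\e u,v]+(\J\e u,v)_{\L(\Omega\e)}$, the gradient and the $+u$ terms cancel exactly, and we are left with the discrepancy functional
\begin{gather*}
\ell\e(v) \ceq V\int_{\Omega\e} u\,\overline v\d x - \sum_{i\in\I\e}\gamma\e\int_{\partial D\ie} u\,\overline v\d s - \sum_{i\in\I\e}\int_{\partial D\ie}\frac{\partial u}{\partial n}\,\overline v\d s,
\end{gather*}
which must be shown to be bounded by $C\eta\e\|v\|_{\H^1(\Omega\e)}$.

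The heart of the matter is that this $\ell\e$ is \emph{not} small termwise; smallness only emerges after a cell-by-cell compensation that encodes the definitions of $P\e$, $Q\e$ and $V\e$. The natural device is to introduce, on each punctured cell $\square\ie\setminus\overline{D\ie}$, the radial comparison function $w\ie$ solving the model problem $-\Delta w\ie = 0$ in an annulus with $\frac{\partial w\ie}{\partial n}+\gamma\e w\ie$ prescribed on $\partial D\ie$ — equivalently, the capacitary-type potential whose Dirichlet energy and boundary integral reproduce $Q\e$ and $P\e$. Using $w\ie$ one rewrites
\begin{gather*}
\sum_{i\in\I\e}\Big(\gamma\e\int_{\partial D\ie} u\,\overline v\d s + \int_{\partial D\ie}\frac{\partial u}{\partial n}\,\overline v\d s\Big)
\end{gather*}
as an integral over $\Omega\e$ of $\nabla(\text{something})\cdot\overline{\nabla v}$ plus a zeroth-order term with coefficient $\approx V\e$, modulo controllable errors. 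Concretely: on each cell, $\sum_i \gamma\e\int_{\partial D\ie}u\bar v\d s \approx P\e\int u\bar v$ up to an error governed by oscillation of $u$ and $v$ over the small hole, and the normal-derivative sum is handled by noting that $\frac{\partial u}{\partial n}$ on $\partial D\ie$ is $O(d\e)$ times $\|\nabla^2 u\|$ near $x\ie$ (by $\H^2$-regularity and Taylor expansion of $u$ around $x\ie$), whose aggregated contribution scales like $\eps$ times $\|u\|_{\H^2(\Omega)}\,\|v\|_{\L(\Omega\e)}$ after using a trace/Poincaré inequality on the shell $\square\ie\setminus\overline{D\ie}$. The mismatch $|V\e - V|$ and the purely geometric remainders (traces of $u$ and $v$ on the small spheres, controlled by $\|v\|_{\H^1(\square\ie\setminus\overline{D\ie})}$ via the scaled trace inequality quoted before \eqref{ae}, and by smoothness of $u$) are exactly what produce the dimension-dependent quantities $\deps$, $\deps|\ln\deps|$, $\deps^{1/2}$, $|\ln\deps|^{-1/2}$ appearing in \eqref{eta} — the logarithms in $n=2,4$ coming from the logarithmic behaviour of the annular capacity potential in those critical dimensions.

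Assembling: summing the cell estimates and using Cauchy–Schwarz together with $\sum_{i\in\I\e}\|v\|_{\H^1(\square\ie\setminus\overline{D\ie})}^2 \le \|v\|_{\H^1(\Omega\e)}^2$ and the global $\H^2$-bound $\|u\|_{\H^2(\Omega)}\le C\|f\|_{\L(\Omega)}$ (uniform regularity of $\Omega$), one obtains $|\ell\e(v)|\le C\eta\e\|v\|_{\H^1(\Omega\e)}$. Taking $v$ to be (a small modification of) $u\e-\J\e u$, the coercivity of $\a\e[\cdot,\cdot]+(\cdot,\cdot)_{\L(\Omega\e)}$ then yields $\|u\e-\J\e u\|_{\H^1(\Omega\e)}\le C\eta\e\|f\|_{\L(\Omega)}$ and a fortiori the $\L(\Omega)\to\L(\Omega\e)$ estimate \eqref{th1:est}. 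I expect the main obstacle to be the correct choice and analysis of the cell comparison function $w\ie$: one must split $\ell\e(v)$ so that the leading $P\e$ and $Q\e$ parts recombine into precisely the zeroth-order coefficient $V\e = P\e Q\e/(P\e+Q\e)$ rather than into $P\e$ or $Q\e$ separately, which requires handling the interaction between the Robin boundary term and the harmonic extension carefully — and tracking the remainder in each dimension to land exactly on the rates in \eqref{eta} rather than something worse. The pieces $|V\e-V|$ and the $O(\eps)$ elliptic-regularity error are comparatively routine; the delicate part is the geometric trace estimate on the thin perforated cells and its critical-dimension logarithmic corrections.
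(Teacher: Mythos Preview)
Your proposal tracks the paper's strategy in spirit --- both are variational --- but there is a real gap at precisely the step you flag as delicate. In the paper the comparison is not between $u\e$ and $\J\e u$ but between $u\e$ and $\J\e^1 u$, where $\J\e^1$ (defined in \eqref{J1}) builds the corrector $G\ie$, a scaled fundamental solution, \emph{into} the trial function. The normalisation of $G\ie$ is chosen so that $\J\e^1 f$ satisfies the Robin condition $\partial_n(\J\e^1 f)+\gamma\e(\J\e^1 f)=0$ \emph{exactly} on each $\partial D\ie$ (see \eqref{G:bk}); the boundary contribution in the analogue of your $\ell\e$ then vanishes identically, and the remaining bulk term $-\sum_i(u,\Delta(f\ie G\ie\phi\ie))$ produces the coefficient $V\e$ via the flux identity \eqref{intGie}. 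The recombination $P\e,Q\e\mapsto V\e=P\e Q\e/(P\e+Q\e)$ is thus hard-wired into the definition of $G\ie$, not recovered post hoc from estimating $\ell\e$.

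Your ``concrete'' route --- approximate the Robin sum by $P\e\int u\bar v$ and treat the normal-derivative sum as a lower-order remainder --- yields the wrong potential: it gives $P$, not $V$, and collapses exactly when $0<Q<\infty$. Your claim that $\partial u/\partial n=O(d\e\|\nabla^2 u\|)$ on $\partial D\ie$ is also off: the leading part is $\nabla u(x\ie)\cdot n$, which integrates to zero only against constants, so one must replace $v$ by its cell mean at a cost controlled by $\|\nabla v\|$, and the resulting term is not small --- it is precisely what must be \emph{cancelled} against part of the Robin contribution, and that cancellation is where $Q\e$ enters. Indeed, Section~\ref{sec:7} of the paper carries out exactly your un-corrected argument (with $\J\e^1 f=f\restriction_{\Omega\e}$) and obtains the rate $\eta\e''$ of \eqref{eta:prime:2}, which contains the factor $P\e Q\e^{-1/2}$ and tends to zero only under the additional hypothesis \eqref{spec.case}. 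To get \eqref{th1:est} in the full range $P<\infty$ or $Q<\infty$, the corrector has to be inserted into the trial function from the start, not invoked afterwards as a bookkeeping device for rewriting $\ell\e$.
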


The next result shows that in some cases 
(see~Remark~\ref{rem:goodPQ} below),
the resolvents of $\A\e$ and $\A$ are also close in the
$(\H^1\to \L)$ operator norm. { We define 
\begin{gather}\label{eta:prime:1}
\eta\e'\ceq \max\left\{V\e Q\e^{-1/2};\,\eta\e\right\},   
\end{gather}
where $\eta\e$ is given in \eqref{eta}, $Q\e$ and $V\e$ are defined in 
\eqref{peqe} and \eqref{Ve}, respectively.}

\begin{theorem}\label{th2}
Let  $P<\infty$ or $Q<\infty$. Then one has 
\begin{gather}\label{th2:est} 
\left\|(\A\e+\Id)^{-1}\J\e  - \J\e(\A+\Id)^{-1}\right\|_{\L(\Omega)\to \H^1(\Omega\e)}\le 
C\eta\e'.
\end{gather}
\end{theorem}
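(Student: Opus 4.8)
The plan is to upgrade Theorem~\ref{th1} from an $\L\to\L$ estimate to an $\L\to\H^1$ estimate by keeping track of the Dirichlet energy of the error, which is already implicitly controlled in the proof of Theorem~\ref{th1}. Write $w\e\ceq(\A\e+\Id)^{-1}\J\e f-\J\e(\A+\Id)^{-1}f$ for $f\in\L(\Omega)$, so that $u\e=(\A\e+\Id)^{-1}\J\e f$ solves \eqref{BVP:e} with right-hand side $f\e$ and $u=(\A+\Id)^{-1}f$ solves \eqref{BVP:0}. Since $\|w\e\|_{\H^1(\Omega\e)}^2=\|w\e\|_{\L(\Omega\e)}^2+\|\nabla w\e\|_{\L(\Omega\e)}^2$ and the first term is already bounded by $C\eta\e\|f\|$ via Theorem~\ref{th1}, the whole task reduces to estimating $\|\nabla w\e\|_{\L(\Omega\e)}$. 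First I would test the weak formulation of \eqref{BVP:e} against $w\e$ itself (this is legitimate because $w\e\in\dom(\a\e)$, as $\J\e u$ vanishes on $\partial\Omega$), obtaining
\begin{gather*}
\|\nabla w\e\|_{\L(\Omega\e)}^2+\sum_{i\in\I\e}\gamma\e\|w\e\|_{\L(\partial D\ie)}^2+\|w\e\|_{\L(\Omega\e)}^2
=\a\e[u\e,w\e]+(u\e,w\e)-\a\e[\J\e u,w\e]-(\J\e u,w\e),
\end{gather*}
and the first two terms on the right combine to $(f\e,w\e)_{\L(\Omega\e)}$, while the remaining two measure how far $\J\e u$ is from solving \eqref{BVP:e}.

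The core of the argument is therefore to estimate the ``discrepancy functional''
\begin{gather*}
\mathcal{E}\e(w\e)\ceq(f\e,w\e)_{\L(\Omega\e)}-\a\e[\J\e u,w\e]-(\J\e u,w\e)_{\L(\Omega\e)}.
\end{gather*}
Using $-\Delta u+Vu+u=f$ in $\Omega$ and integrating by parts on $\Omega\e$ (taking care of the boundary terms on $\cup_i\partial D\ie$, where $\J\e u$ has no reason to satisfy the Robin condition), one finds that $\mathcal{E}\e(w\e)$ splits into a ``volume'' contribution of the form $(V\e-V)$ times an $\L$-pairing plus an artifact of replacing $V\e$ by the actual geometric terms $P\e,Q\e$, and a ``boundary/corrector'' contribution supported near the holes involving $\nabla u$ and the Robin trace of $u$ on $\partial D\ie$. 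The volume part is $O(|V\e-V|)\|f\|\|w\e\|_{\L}$ plus a term that, after summing over the cells and using the smallness $\deps\to0$ together with the $\H^2$-regularity of $u$ (Sobolev trace and the mean-value/Poisson-type estimates on the annuli $\square\ie\setminus\overline{D\ie}$ already used for Theorem~\ref{th1}), is $O(\eta\e)\|f\|\|w\e\|_{\H^1}$. The genuinely new contribution is the one I expect to be the main obstacle: the boundary layer term has the shape $\sum_i\gamma\e\int_{\partial D\ie}u\,\overline{w\e}\d s$, and after Cauchy--Schwarz and the scaled trace inequality on $\partial D\ie$ this produces the factor
\begin{gather*}
\Big(\sum_i\gamma\e\|u\|_{\L(\partial D\ie)}^2\Big)^{1/2}\Big(\sum_i\gamma\e\|w\e\|_{\L(\partial D\ie)}^2\Big)^{1/2},
\end{gather*}
where the second factor is absorbed by the left-hand side, but the first must be shown to be $O(V\e Q\e^{-1/2})\|f\|$; this is exactly where the weight $V\e Q\e^{-1/2}$ in the definition \eqref{eta:prime:1} of $\eta\e'$ comes from, and its control requires combining $\gamma\e d\e^{n-1}\eps^{-n}\sim P\e$ with the capacity-type estimate $\|w\e\|_{\L(\partial D\ie)}^2\lesssim$ (something involving $Q\e^{-1}$ and $\|\nabla w\e\|^2_{\L(\square\ie\setminus\overline{D\ie})}$). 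In other words, the inequality $V\e\le\min\{P\e;Q\e\}$ together with $V\e Q\e^{-1/2}\le\min\{Q\e^{1/2};P\e Q\e^{-1/2}\}$ is precisely what makes the boundary term subcritical.

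Once $\mathcal{E}\e(w\e)\le C\eta\e'\,\|f\|_{\L(\Omega)}\,\|w\e\|_{\H^1(\Omega\e)}$ is established, the identity above gives
\begin{gather*}
\|\nabla w\e\|_{\L(\Omega\e)}^2+\|w\e\|_{\L(\Omega\e)}^2\le C\eta\e'\,\|f\|_{\L(\Omega)}\,\|w\e\|_{\H^1(\Omega\e)},
\end{gather*}
and dividing by $\|w\e\|_{\H^1(\Omega\e)}$ yields \eqref{th2:est}; I would dispose of the case $w\e=0$ trivially. Two technical points will need care: (i) the scaled trace inequalities on the perforated cells must be uniform in $i$ and sharp in $\eps$ — these are the same ones invoked right after \eqref{ae} and in the proof of Theorem~\ref{th1}, so I would simply cite them with the explicit $\eps$-dependence recorded there; and (ii) the global $\H^2$-regularity $\dom(\A)=\H^2(\Omega)\cap\H^1_0(\Omega)$, guaranteed by the uniform regularity of $\Omega$, is what allows the elliptic bound $\|u\|_{\H^2(\Omega)}\le C\|f\|_{\L(\Omega)}$ that feeds all the interior and trace estimates near the holes. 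Modulo these, the proof is a single energy identity plus the bookkeeping already developed for Theorem~\ref{th1}, with the extra weight $V\e Q\e^{-1/2}$ tracking exactly the boundary layer of the Robin condition.
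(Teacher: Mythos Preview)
Your energy-identity strategy has a genuine gap in the regime $Q<\infty$ and $P\neq 0$ (in particular when $0<P,Q<\infty$ or when $P=\infty$, $Q<\infty$). After integrating by parts you arrive at
\[
\mathcal{E}\e(w\e)=V\int_{\Omega\e}u\,\overline{w\e}\,\d x-\sum_{i\in\I\e}\int_{\partial D\ie}\Bigl(\frac{\partial u}{\partial\nu}+\gamma\e u\Bigr)\overline{w\e}\,\d s.
\]
The normal-derivative piece is indeed small (by Gauss and $u\in\H^2$), but the Robin piece $\sum_i\gamma\e\int_{\partial D\ie}u\,\overline{w\e}$ is \emph{not} an approximation of $V\e\int_{\Omega\e}u\,\overline{w\e}$: replacing $u$ and $w\e$ by their cell averages $u\ie$, $w\ie$ and using $\gamma\e|\partial D\ie|=P\e\eps^n$ gives $\sum_i\gamma\e\int_{\partial D\ie}u\,\overline{w\e}\approx P\e\int u\,\overline{w\e}$, not $V\e\int u\,\overline{w\e}$. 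The leftover is $(P\e-V)\int u\,\overline{w\e}$, and since $P\e-V\e=P\e^2/(P\e+Q\e)$ has a nonzero limit whenever $Q<\infty$ and $P\neq 0$, this term is $O(1)$ and cannot be absorbed into $\eta\e'\|w\e\|_{\H^1}$. Your Cauchy--Schwarz bound $(\sum_i\gamma\e\|u\|^2_{\L(\partial D\ie)})^{1/2}=O(V\e Q\e^{-1/2})\|f\|$ is likewise false in this range: Lemma~\ref{lemma:crucial+} gives this quantity $\sim P\e^{1/2}\|u\|_{\H^1}$, which even blows up if $P=\infty$.

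This is exactly why the paper's proof does \emph{not} bypass the corrector. It inserts $\J\e^1$ (built from the capacity potential $G\ie$, cf.~\eqref{J1}) and splits
\[
\|u\e-\J\e u\|_{\H^1(\Omega\e)}\le \|u\e-\J\e^1 u\|_{\HS\e^1}+\|(\J\e^1-\J\e)u\|_{\H^1(\Omega\e)}.
\]
The first term is $\le C\eta\e$ by Theorem~\ref{thA1+}; the crucial identity \eqref{G:bk}, $\partial_n G\ie+\gamma\e(G\ie+1)=0$, together with $-\int_{\partial D\ie}\partial_n G\ie=V\e\eps^n$, is precisely what converts the effective boundary coupling from $P\e$ to $V\e$ inside the check of condition~\eqref{thA1:3}. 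The weight $V\e Q\e^{-1/2}$ then enters only in the second term, through $\|\nabla G\ie\|_{\L(Y\ie)}^2=V\e^2 Q\e^{-1}\eps^n$ (see~\eqref{nablaGie}). Your approach, with $\J\e^1=\J\e$, is essentially what the paper carries out in Section~\ref{sec:7} for the restricted regime~\eqref{spec.case}, where $V=P$ and the obstruction $P\e-V\e\to 0$; that is Theorem~\ref{th2a}, not Theorem~\ref{th2}.
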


\begin{remark}\label{rem:goodPQ}
For $P,Q$ satisfying \eqref{goodPQ}
the right-hand-side in \eqref{th2:est} tends to zero which follows immediately from the inequality \eqref{Vpq}.
However, in the remaining case 
$
P\not= 0\, \wedge \, Q>0
$
the estimate \eqref{th2:est} is pointless.
To get a reasonable result for this case, 
one needs  a special corrector; see Theorem~\ref{th3} below.
\end{remark}
 
We define the function
${G}\e\in\mathsf{C}^\infty(\Omega\e)$ by
\begin{gather}\label{Ge}
G\e (x)\ceq 
\suml_{i\in\I\e} G\ie(x)\phi\ie(x),\ x\in\Omega\e.
\end{gather}
Here 
$G\ie$ is the fundamental solution to the Laplace equation in $\R^n$ times the constant $V\e\eps^n$:
\begin{gather}\label{Gie}
G\ie(x)\ceq V\e\eps^n\cdot
\begin{cases}
-\ds{1\over \varkappa_n(n-2)|x-x\ie|^{n-2}},&n\ge 3,\\[3mm]
\ds{1\over 2\pi}\ln |x-x\ie|,&n=2,
\end{cases}
\end{gather} 
the cut-off function $\phi\ie$ is defined via
\begin{gather}\label{Phiie}
\phi\ie(x)\ceq \phi\left({4|x-x\ie|\over\eps}\right),
\end{gather}
where    $\phi\in\mathsf{C}^\infty([0,\infty))$ is  such that
\begin{gather}\label{Phi}
0\le \phi(t)\leq 1,\quad 
\phi(t)=1\text{ as }t\le 1,\quad
\phi(t)=0\text{ as }t\ge 2.
\end{gather}
We shall use the same notation $G\e$ for the operator of multiplication by the function $G\e(x)$.
Finally, we define
\begin{gather}\label{wtdeltan}
\wt\eta\e\ceq
\begin{cases}
\max\{|V\e-V|;\,\eps^{2/(n-2)}\},&n\ge 5,\\
\max\{|V\e-V|;\,\eps|\ln \eps| \},&n= 4,\\
\max\{|V\e-V|;\,\eps\},&n= 3,\\
\max\{|V\e-V|;\,\eps|\ln\eps|\},&n= 2.
\end{cases}
\end{gather}

\begin{theorem}\label{th3}
Let  $Q>0$. Then one has  
\begin{gather}\label{th3:est}
\left\|(\A\e+\Id)^{-1}\J\e  - (\Id+G\e)\J\e  (\A+\Id)^{-1}\right\|_{\L(\Omega)\to \H^1(\Omega\e)}\le C
\wt\eta\e.
\end{gather}   
\end{theorem}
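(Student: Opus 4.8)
\textbf{Proof proposal for Theorem~\ref{th3}.}

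The plan is to treat this as a corrector estimate in the varying-Hilbert-space framework of \cite{KP21,AP21,P06}, refining the computation behind Theorems~\ref{th1} and \ref{th2} rather than starting afresh. Write $u\ceq(\A+\Id)^{-1}f$ and $u\e\ceq(\A\e+\Id)^{-1}\J\e f$; we must bound $\|u\e-(\Id+G\e)\J\e u\|_{\H^1(\Omega\e)}$. First I would record the elementary properties of the corrector function $G\e$: since $G\ie$ is (a multiple of) a harmonic fundamental solution and $\phi\ie$ is supported in the ball of radius $\eps/2$ around $x\ie$, the function $G\e$ is supported in $\bigcup_i B(x\ie,\eps/2)$, is smooth on $\Omega\e$, and on the annulus $\tfrac{\eps}{4}\le|x-x\ie|\le\tfrac{\eps}{2}$ one has $|G\ie|\le CV\e\eps^2$ and $|\nabla G\ie|\le CV\e\eps$ (for $n\ge3$; logarithmic modifications for $n=2$). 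On the inner region $|x-x\ie|\le\tfrac{\eps}{4}$, $G\e=G\ie$ is exactly harmonic, which is the whole point: $\Delta G\e$ is supported only in the transition annuli, where it is $O(V\e\eps^{-1}\cdot V\e\eps^2)=O(V\e^2\eps)$ pointwise on a set of measure $O(\eps^n)$ per cell, i.e.\ summing over $O(\eps^{-n})$ cells gives $\|\Delta G\e\|_{\L(\Omega\e)}^2 = O(V\e^2 \cdot \eps^{2}\cdot \varepsilon^{n}\cdot\eps^{-n})$-type bounds — the key gain is that the \emph{singular part} of the would-be residual has been removed by the explicit harmonic profile.

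Next I would plug $(\Id+G\e)\J\e u$ into the form $\a\e$ and compute the defect $\a\e[(\Id+G\e)\J\e u,v] + ((\Id+G\e)\J\e u,v)_{\L(\Omega\e)} - (\J\e f,v)_{\L(\Omega\e)}$ for $v\in\dom(\a\e)$. The gradient term splits as $\nabla u + u\nabla G\e + G\e\nabla u$; integrating by parts against $\nabla v$, the bulk term $\int\nabla u\cdot\overline{\nabla v}$ combines with $\int u\bar v$ and $-\int f\bar v$ to produce, via the equation $-\Delta u + Vu + u = f$ in $\Omega$, the principal discrepancy $-V\int_{\Omega\e} u\bar v$. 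The claim is that this is cancelled up to small error by two contributions: (i) the Robin boundary term $\sum_i\gamma\e\int_{\partial D\ie}(1+G\e)u\,\bar v\,\d s$, which to leading order equals $\sum_i P\e\eps^{-n}|\partial D\ie|\cdot(\text{local average of }u\bar v)$ — wait, more precisely it contributes the $P\e$-part of $V\e$ — and (ii) the flux of $G\e$ through the sphere $\partial D\ie$: since on $|x-x\ie|\le\eps/4$ we have $\Delta G\ie=V\e\eps^n\delta_{x\ie}$ in the distributional sense away from $D\ie$, integrating $u\Delta G\ie$ by parts over $\square\ie\setminus\overline{D\ie}$ produces the $Q\e$-part, namely $\int_{\partial D\ie}\tfrac{\partial G\ie}{\partial n}u\,\d s \approx V\e\eps^n\cdot(\text{capacity factor})\cdot u(x\ie)$. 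The arithmetic is designed so that the $P\e$ and $Q\e$ contributions assemble into exactly $V\e\eps^n u(x\ie)$ per cell, i.e.\ into $V\e\int_{\Omega\e}u\bar v$ up to replacing cell averages by $u(x\ie)$, and then $|V\e-V|$ accounts for the rest. Estimating the difference between cell-averages/point-values of $u\bar v$ and the actual integrals costs $\eps\|u\|_{\H^2}\|v\|_{\H^1}$, which is controlled by $\|f\|_{\L(\Omega)}$ using the global $\H^2$-regularity $\dom(\A)=\H^2(\Omega)\cap\H^1_0(\Omega)$; the leftover residual terms (the cross term $G\e\nabla u\cdot\overline{\nabla v}$, the $\Delta G\e$ term, higher-order boundary terms) are bounded by the dimension-dependent quantities $\eps^{2/(n-2)}$, $\eps|\ln\eps|$, $\eps$ as in \eqref{wtdeltan}, using the support and size estimates for $G\e$ above together with Cauchy--Schwarz and the trace inequality on $\square\ie\setminus\overline{D\ie}$. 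This yields $|\a\e[(\Id+G\e)\J\e u,v]+((\Id+G\e)\J\e u,v)-(\J\e f,v)|\le C\wt\eta\e\|f\|_{\L(\Omega)}\|v\|_{\H^1(\Omega\e)}$.

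Having this, the conclusion follows by a standard coercivity/Lax--Milgram argument: the form $\a\e[\cdot,\cdot]+(\cdot,\cdot)_{\L(\Omega\e)}$ is uniformly bounded below by $\|\cdot\|_{\H^1(\Omega\e)}^2$, and for the genuine solution $u\e$ one has $\a\e[u\e,v]+(u\e,v)=(\J\e f,v)$ for all $v\in\dom(\a\e)$; subtracting and testing with $v=u\e-(\Id+G\e)\J\e u$ — which indeed lies in $\dom(\a\e)$ since $\J\e u$ vanishes on $\partial\Omega$, $G\e$ is smooth and compactly supported away from $\partial\Omega$, and $G\e$ vanishes near $\partial\Omega$ — gives $\|u\e-(\Id+G\e)\J\e u\|_{\H^1(\Omega\e)}^2\le C\wt\eta\e\|f\|_{\L(\Omega)}\|u\e-(\Id+G\e)\J\e u\|_{\H^1(\Omega\e)}$, hence \eqref{th3:est}. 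Technically one should note $(\Id+G\e)\J\e u\notin\dom(\A\e)$ in general, so this must be phrased via the sesquilinear form (as above), not via the operator $\A\e$ directly.

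The main obstacle I anticipate is the bookkeeping in the second paragraph: correctly matching the Robin boundary contribution (proportional to $|\partial D\ie|\gamma\e = \varkappa_n d\e^{n-1}\gamma\e$) and the corrector-flux contribution (proportional to the capacity $(n-2)\varkappa_n d\e^{n-2}$, resp.\ $2\pi/|\ln d\e|$) so that they add up to precisely $V\e = P\e Q\e/(P\e+Q\e)$ per unit cell-volume — this is the algebraic heart of why $G\e$ is the \emph{right} corrector, and it is exactly here that the harmonic profile $G\ie$ must be the fundamental solution scaled by $V\e\eps^n$ and not $P\e$ or $Q\e$ separately. One must also be careful that the ``cell-average vs.\ point value'' replacements are done consistently for both the $u\bar v$ terms appearing from the bulk and from the boundary, since it is their \emph{difference} (of order $\eps$ relative to $\|u\|_{\H^2}$) that survives, together with the bias $|V\e-V|$, to form the first entry of $\wt\eta\e$ in every dimension.
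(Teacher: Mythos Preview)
Your approach is a direct energy/Lax--Milgram argument, which is genuinely different from the paper's route. The paper does \emph{not} recompute the defect of $(\Id+G\e)\J\e u$ from scratch; instead it uses a triangle inequality through the operator $\J\e^1$ of \eqref{J1} (already analysed in Section~\ref{sec:5}): one term is $\|(\A\e+\Id)^{-1}\J\e-\J\e^1(\A+\Id)^{-1}\|_{\HS\to\HS^1\e}\le C\eta\e\le C\wt\eta\e$ by Theorem~\ref{thA1+} and \eqref{eta:eta}, and the other is $\|(\J\e^1-(\Id+G\e)\J\e)f\|_{\H^1(\Omega\e)}$. The point is that $\J\e^1 f-(\Id+G\e)\J\e f=\sum_i(f\ie-f)\wt\phi\ie+\sum_i(f\ie-f)G\ie\phi\ie$, so \emph{both} remaining terms carry the small factor $f-f\ie$, and their $\H^1$-norms are estimated by H\"older with Sobolev embeddings \eqref{sobolev}--\eqref{sobolev:grad} (this is where the dimension-dependent rates in $\wt\eta\e$ come from; see the quantities $\mathcal I_\eps^1,\mathcal I_\eps^2$ in Subsection~\ref{subsec:6:3}). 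This is cleaner because all the delicate boundary bookkeeping was already done once in Section~\ref{sec:5}.

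Your sketch can be made to work, but as written it has two real gaps. First, when you pass from $\int_{\Omega\e}\nabla u\cdot\overline{\nabla v}+\int u\bar v-\int f\bar v$ to $-V\int_{\Omega\e}u\bar v$ you drop the boundary term $\sum_i\int_{\partial D\ie}(\partial u/\partial n)\bar v$ coming from integration by parts on the perforated domain; this term \emph{is} $O(\wt\eta\e)$ under $Q>0$ (since $d\e\sim\eps^{n/(n-2)}$), but it must be estimated, and this is precisely what the paper avoids by inserting $(f\ie-f)\wt\phi\ie$ into $\J\e^1$ so that the Robin condition holds exactly. Second, your accounting of ``Robin gives the $P\e$-part, flux gives the $Q\e$-part'' is muddled: by \eqref{G:bk} the Robin contribution $\gamma\e\int_{\partial D\ie}(1+G\ie)u\bar v$ and the flux $\int_{\partial D\ie}u(\partial G\ie/\partial n)\bar v$ cancel \emph{exactly}, and the $V\e$ term actually emerges from $-\int u\,\Delta(G\ie\phi\ie)\,\bar v$ on the transition annulus via \eqref{intGie}; but to extract it cleanly you must first replace $u$ by its cell average $u\ie$ (otherwise the cross term $\int(\nabla u\cdot\nabla G\e)\bar v$ that appears is not directly controllable---it is of order $V\e Q\e^{-1/2}$, the very quantity you are trying to avoid). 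Once you write $u=u\ie+(u-u\ie)$ in the corrector terms you are essentially reproducing the $\J\e^1$ construction and the $\mathcal I_\eps^2$-estimate of the paper by hand; the paper's triangle-inequality route simply organises this more transparently.
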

 
\begin{remark}
Let $Q>0$. In this case we have, using \eqref{pq+}:
\begin{gather}\label{Qpositive}
\exists C_1,C_2>0:\quad
C_1\eps^{n}\leq
\mathcal{D}\e\leq C_2\eps^{n},
\end{gather}
as $\eps\in (0,\eps_0]$,
where $$\mathcal{D}\e\ceq
\begin{cases}
d\e^{n-2},&n\ge 3,\\
|\ln d\e|^{-1},&n=2.
\end{cases}
$$
It follows easily from \eqref{Qpositive} (taking into account \eqref{lnln} for $n=2$) that
\begin{gather}\label{eta:eta}
\exists C_3,C_4>0:\quad
C_3\eta\e\leq \wt\eta\e\leq C_4\eta\e.
\end{gather}
\end{remark}

Our next goal is to estimate the distance between the spectra of $\A\e$ and $\A$ in a suitable metric.
Recall that
for closed sets $X,Y\subset\R$  the \emph{Hausdorff distance} between  them is given by
\begin{gather}\label{dH}
d_H (X,Y)\ceq\max\left\{\sup_{x\in X} \inf_{y\in Y}|x-y|;\,\sup_{y\in Y} \inf_{x\in X}|y-x|\right\}.
\end{gather}
The notion of convergence provided by this metric   is too restrictive for our purposes.  Indeed, the closeness  of $\sigma(\A\e)$ and $\sigma(\A)$  in  the metric $d_H(\cdot,\cdot)$ would mean  that  these  spectra  look nearly the same uniformly in the whole  of $[0,\infty)$ -- a situation which is not guaranteed by norm  resolvent  convergence.
To overcome this difficulty, it is convenient to introduce the new metric $\widetilde{d_H}(\cdot,\cdot)$ which is given by
\begin{gather}
\label{dHbar}
\widetilde{d_H}(X,Y)\ceq d_H( \overline{(1+X)^{-1}}, \overline{(1+Y)^{-1}}),\ X,Y\subset[0,\infty),
\end{gather}
where  $(1+X)^{-1}\ceq {\{(1+x)^{-1}:\ x\in X\}}$, $(1+Y)^{-1}\ceq {\{(1+y)^{-1}:\ y\in Y\}}$.
With respect to this metric  two  spectra  can  be  close  even  if  they  differ  significantly  at  high energies. 

\begin{remark}
Let $X\e,\,X$ be closed subsets of $[0,\infty)$.
One can show  (see, e.g., \cite[Lemma~A.2]{HN99}) that $\widetilde{d_H}(X\e,X)\to 0$ if and only if   the following two conditions hold simultaneously:
\begin{itemize}
\item[(i)] $\forall x\in X$ there exists a family $(x\e)_{\eps>0}$ with $x\e\in X\e$ such that $x\e\to x$ as $\eps\to 0 $,

\item[(ii)] $\forall x\in\R\setminus X$ there exist $\delta>0$ such that $X\e\cap (x-\delta,x+\delta)=\varnothing$ for  
small enough  $\eps$.

\end{itemize} 
\end{remark}

\begin{theorem}\label{th4}
Let  $P<\infty$ or $Q<\infty$.
Then one has
\begin{gather} \label{th4:est}
\widetilde{d_H}\left(\sigma(\A\e),\sigma(\A)\right)\leq 
C \eta\e,
\end{gather}
where $\eta\e$ is given in \eqref{eta}.
\end{theorem}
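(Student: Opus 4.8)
\textbf{Proof proposal for Theorem~\ref{th4}.}
The plan is to deduce the spectral estimate \eqref{th4:est} from the operator-norm resolvent estimate \eqref{th1:est} of Theorem~\ref{th1}, via an abstract lemma relating $\widetilde{d_H}$ to the $\L(\Omega)\to\L(\Omega\e)$ distance of the (transplanted) resolvents. Concretely, set $R\e\ceq(\A\e+\Id)^{-1}$ and $R\ceq(\A+\Id)^{-1}$, viewed as bounded, self-adjoint, nonnegative operators on $\L(\Omega\e)$ and $\L(\Omega)$ respectively, with $\|R\e\|,\|R\|\le 1$. Since $\A\e,\A\ge 0$, we have $\sigma(R\e)=\overline{(1+\sigma(\A\e))^{-1}}\subset[0,1]$ and likewise for $R$; hence by definition \eqref{dHbar}, $\widetilde{d_H}(\sigma(\A\e),\sigma(\A))=d_H(\sigma(R\e),\sigma(R))$. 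So it suffices to bound the Hausdorff distance of the two spectra (as subsets of $[0,1]$) by $C\eta\e$.

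The standard tool here is the following: if $T\e,T$ are bounded self-adjoint operators on Hilbert spaces $\HS\e,\HS$ and $\J\e:\HS\to\HS\e$ is a bounded identification operator with $\|\J\e\|\le 1$ and $\|\J\e^*\J\e-\Id\|$ small, then $\sigma(T\e)$ and $\sigma(T)$ are close in $d_H$ whenever $\|T\e\J\e-\J\e T\|$ is small — this is precisely the type of statement furnished by the abstract machinery of \cite{KP21,AP21,P06} cited in the introduction. I would invoke such a lemma with $T\e=R\e$, $T=R$. The two ingredients it requires are: (a) the resolvent commutator bound $\|R\e\J\e-\J\e R\|_{\L(\Omega)\to\L(\Omega\e)}\le C\eta\e$, which is exactly \eqref{th1:est}; and (b) an estimate on how far $\J\e$ is from being unitary, i.e. a bound on $\|\J\e^*\J\e-\Id\|_{\L(\Omega)\to\L(\Omega)}$ and on $\|\J\e\J\e^*-\Id\|_{\L(\Omega\e)\to\L(\Omega\e)}$. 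Here $\J\e^*\J\e$ is multiplication by the characteristic function of $\Omega\e$, so $\Id-\J\e^*\J\e$ is multiplication by the characteristic function of $\bigcup_i\overline{D\ie}$; its norm is $1$, not small — so a naive application fails, and one instead uses that this defect is small \emph{in the relevant seminorm}, namely when composed with the resolvents, because $\|\mathbf{1}_{D\ie}u\|$ is controlled by $|D\ie|^{1/n}$-type factors times $\|u\|_{\H^1}$ for $u\in\H^1$. In fact, since $\mathrm{vol}(\bigcup_i D\ie)\le C\,d\e^n/\eps^n=C\Lambda\e^n\cdot\mathrm{vol}(\Omega)=o(1)$, even the crude bound $\|(\Id-\J\e^*\J\e)Rf\|_{\L(\Omega)}\le C\Lambda\e^{n/2}\|Rf\|_{\L^\infty}$ together with elliptic $\L^\infty$-bounds, or more simply a direct interpolation estimate, gives a defect of order $o(\eta\e)$; the term $\Lambda\e$ (resp.\ $\Lambda\e^{1/2}$, $|\ln\Lambda\e|^{-1/2}$) is in any case dominated by $\eta\e$ for every $n$. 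So the quasi-unitarity defect is $\le C\eta\e$ and does not spoil the estimate.

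With (a) and (b) in hand the argument is short: for $\lambda\in\sigma(R)$ pick a normalized approximate eigenvector $\psi$ of $R$; then $\J\e\psi$ is, up to an error $\le C\eta\e\|\psi\|$, an approximate eigenvector of $R\e$ at the same $\lambda$ (using $R\e\J\e\psi = \J\e R\psi + O(\eta\e) = \lambda\J\e\psi + O(\eta\e)$, and $\|\J\e\psi\|\ge 1-C\eta\e$ by the quasi-unitarity bound), so $\dist(\lambda,\sigma(R\e))\le C\eta\e$; symmetrically, working with $\J\e^*$ in place of $\J\e$ (and the companion defect bound), $\dist(\mu,\sigma(R))\le C\eta\e$ for every $\mu\in\sigma(R\e)$. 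Taking suprema yields $d_H(\sigma(R\e),\sigma(R))\le C\eta\e$, i.e. $\widetilde{d_H}(\sigma(\A\e),\sigma(\A))\le C\eta\e$. The one genuine subtlety — the main obstacle — is item (b): the identification operators $\J\e$ are only quasi-unitary, and one must check that the associated defects enter the final bound at order $\le\eta\e$ rather than at order $1$; this is where the precise form of $\eta\e$ in \eqref{eta} (which already absorbs the volume/capacity corrections in every dimension) is used, and where one should cite the exact hypotheses of the abstract spectral-convergence lemma from \cite{KP21,AP21,P06} to make sure the quasi-unitarity assumptions are met. Everything else is bookkeeping on top of Theorem~\ref{th1}.
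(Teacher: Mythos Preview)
Your approach is correct and is essentially the paper's. The paper invokes the abstract Theorem~\ref{thA2} directly: \eqref{thA2:1} is Theorem~\ref{th1}, \eqref{thA2:2} follows by taking adjoints (since $\J\e^*=\wt\J\e$ by \eqref{cond0:final}), \eqref{thA2:4} holds trivially with $\wt\mu\e=1$, $\wt\nu\e=0$ because $\wt\J\e$ is an isometry, and \eqref{thA2:3} is verified with $\mu\e=1+C\Lambda\e^n$, $\nu\e=C\eps d\e$ via Lemma~\ref{lemma:MK06}; plugging into \eqref{thA2:est} and using $\eps d\e\le\eta\e$, $\Lambda\e<1$ gives \eqref{th4:est}. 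Your Weyl-sequence sketch is precisely the mechanism behind Theorem~\ref{thA2}, and your ``defect small on $\H^1$-functions'' is exactly the form-level condition \eqref{thA2:3}; the $\L^\infty$-route you mention is not needed (and would require $\Omega$ bounded) --- the Poincar\'e-type inequality of Lemma~\ref{lemma:MK06} already does the job.
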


Finally, let $P=Q=\infty$. In this case the solution to the boundary value problem \eqref{BVP:0}
converges to zero. The estimate for the rate of this convergence in the operator norm is given in the theorem below.

\begin{theorem}\label{th5}
Let  $P=Q=\infty$. One has
\begin{gather}\label{th5:est}
\big\|(\A\e+\Id)^{-1} \big\|_{\L(\Omega\e)\to \L(\Omega\e)}\le C\max\left\{P\e^{-1};\,Q\e^{-1};\,\eps^2\right\}.
\end{gather}
\end{theorem}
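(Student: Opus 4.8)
<br>

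The plan is to establish an energy estimate for $u\e=(\A\e+\Id)^{-1}f\e$ directly, without reference to the homogenized problem, since here there is no limiting object to compare to. The key tool is a Poincaré-type inequality on the perforated cell $\square\ie\setminus\overline{D\ie}$ that is quantitatively sensitive to the Robin term on $\partial D\ie$. Concretely, I would first prove: for every $u\in\H^1(\square\ie\setminus\overline{D\ie})$,
\begin{gather*}
\int_{\square\ie\setminus\overline{D\ie}}|u|^2\d x\le C\max\{P\e^{-1};\,Q\e^{-1};\,\eps^2\}\left(\int_{\square\ie\setminus\overline{D\ie}}|\nabla u|^2\d x+\gamma\e\int_{\partial D\ie}|u|^2\d s\right).
\end{gather*}
The $\eps^2$ term comes from the usual Poincaré inequality after subtracting the mean; the point is to control the mean value itself. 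Writing $u=\bar u+(u-\bar u)$ where $\bar u$ is the average over the cell, one estimates $|\bar u|^2$ in terms of either the boundary integral over $\partial D\ie$ (giving the $P\e^{-1}$ factor, via $\gamma\e|\partial D\ie|\sim P\e\eps^n$ and a trace inequality) or the Dirichlet energy near the hole (giving the $Q\e^{-1}$ factor, via the capacity of $D\ie$ relative to the cell, which is $\sim Q\e\eps^n$ — this is exactly the quantity whose scaling defines $Q\e$). The dichotomy in $\max\{P\e^{-1};Q\e^{-1}\}$ reflects which mechanism — absorption on the boundary or a small-capacity obstacle — does the work.

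Second, I would sum over $i\in\I\e$. Since the cells $\square\ie$ tile a subset of $\Omega$ with bounded overlap (in fact none), summing the cellwise inequality yields
\begin{gather*}
\|u\e\|_{\L(\Omega\e')}^2\le C\max\{P\e^{-1};\,Q\e^{-1};\,\eps^2\}\,\a\e[u\e,u\e],
\end{gather*}
where $\Omega\e'\ceq\bigcup_{i\in\I\e}(\square\ie\setminus\overline{D\ie})$. On the boundary layer $\Omega\e\setminus\Omega\e'$ (cells meeting $\partial\Omega$, a strip of width $O(\eps)$) there are no holes, so there the ordinary Poincaré inequality near the Dirichlet boundary gives $\|u\e\|^2\le C\eps^2\|\nabla u\e\|^2$. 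Combining, $\|u\e\|_{\L(\Omega\e)}^2\le C\max\{P\e^{-1};Q\e^{-1};\eps^2\}\,\a\e[u\e,u\e]$.

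Third, I would close the estimate using the equation. Testing $-\Delta u\e+u\e=f\e$ against $u\e$ gives $\a\e[u\e,u\e]+\|u\e\|_{\L(\Omega\e)}^2=(f\e,u\e)_{\L(\Omega\e)}\le\|f\|_{\L(\Omega)}\|u\e\|_{\L(\Omega\e)}$, so in particular $\a\e[u\e,u\e]\le\|f\|_{\L(\Omega)}\|u\e\|_{\L(\Omega\e)}$. Feeding this into the Poincaré bound of the previous step yields $\|u\e\|_{\L(\Omega\e)}\le C\max\{P\e^{-1};Q\e^{-1};\eps^2\}\|f\|_{\L(\Omega)}$, which is precisely \eqref{th5:est} since $u\e=(\A\e+\Id)^{-1}f\e$ and $\|f\e\|_{\L(\Omega\e)}\le\|f\|_{\L(\Omega)}$.

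The main obstacle is the cellwise inequality in the first step, specifically obtaining the sharp $\max\{P\e^{-1};Q\e^{-1}\}$ scaling uniformly in $i$ and $\eps$. The $Q\e^{-1}$ branch requires a careful capacity estimate: one needs that a function with small Dirichlet energy on $\square\ie\setminus\overline{D\ie}$ cannot have a large average unless it is large on $\partial D\ie$, which amounts to a lower bound on the relative capacity $\mathrm{cap}(D\ie,\square\ie)$ of the correct order $\eps^{n}Q\e$ — delicate for $n=2$ where the logarithm enters. One clean route is to test against the capacitary potential of $D\ie$ in $\square\ie$ (explicitly, the harmonic function that is $1$ on $\partial D\ie$, with Neumann data on $\partial\square\ie$, suitably normalized), integrate by parts, and use the Robin boundary term to absorb the resulting boundary contribution; this simultaneously produces both the $P\e^{-1}$ and $Q\e^{-1}$ terms and makes the $\max$ structure transparent. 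A scaling argument reduces everything to the fixed geometry $\square\setminus\overline{B_{\deps}}$, so that uniformity in $i$ is automatic and only the dependence on $\deps\to0$ must be tracked.
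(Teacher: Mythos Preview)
Your approach is correct and structurally identical to the paper's: a cellwise Poincar\'e-type bound, summation over $i\in\I\e$, a boundary-layer estimate near $\partial\Omega$, and closing via the weak formulation $\a\e[u\e,u\e]\le\|f\|\,\|u\e\|$. The only difference is in how the cellwise estimate is obtained. The paper works first on the spherical annulus $Y\ie=\{d\e<|x-x\ie|<\eps/2\}$ and proves (Lemma~\ref{lemma:crucial}) the inequality
\[
\|u\|^2_{\L(Y\ie)}\le C\left(P\e^{-1}\gamma\e\|u\|^2_{\L(\partial D\ie)}+Q\e^{-1}\|\nabla u\|^2_{\L(Y\ie)}\right)
\]
by a direct radial integration (fundamental theorem of calculus from $\partial D\ie$ outward, then Cauchy--Schwarz against $\tau^{1-n}$), which produces the $P\e^{-1}$ and $Q\e^{-1}$ contributions in one stroke; the cube corners $\square\ie\setminus\overline{Y\ie}$ are then controlled by the annulus via a measure-comparison lemma applied to the extension $\wt\J\e^1 u$, which is where the $\eps^2$ term enters. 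Your mean-plus-fluctuation split on the whole punctured cell is equally valid and more conceptual---the identification of $Q\e\eps^n$ with the relative capacity is exactly right---but the paper's radial integration is a bit more economical: it avoids both the uniform Poincar\'e inequality on the punctured cube and the explicit construction of the capacitary potential.
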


The remaining part of the work is organized as follows.
In Section~\ref{sec:3} we present the abstract results for studying convergence of operators in varying Hilbert spaces. In Sections~\ref{sec:4} we collect several useful estimates which will be widely used further. In Section~\ref{sec:5} we verify the conditions of the above abstract theorems in our concrete setting. 
The proofs of  Theorems~\ref{th1}, \ref{th2}, \ref{th3}, \ref{th4}, \ref{th5} are completed in
Section~\ref{sec:6}.  
{Finally, in Section~\ref{sec:7} we revisit the case \eqref{spec.case}, for which we establish an additional 
$(\H^1\to \L)$ operator estimate (Theorem~\ref{th2a}); in some cases this estimate provides better convergence rate than the estimates \eqref{th1:est} and \eqref{th2:est}.
}

\section{Abstract tools}\label{sec:3}

In \cite{P06} Post presented an abstract toolkit
for studying convergence of 
operators in varying Hilbert spaces; further it was elaborated in the
monograph~\cite{P12} and the papers \cite{AP21,KP21,PS19,MNP13}.
Originally this abstract framework was developed to study convergence of the Laplace-Beltrami operator on  manifolds which shrink to a graph. 
Recently, it also has shown to be effective for homogenization problems in domains with holes, see \cite{KP18,AP21}.
The proofs of our main theorems (except Theorem~\ref{th5}) rely on results from \cite{KP21,AP21,P06} which we recall below.
\smallskip 

Let $\HS\e$ and $\HS$ be Hilbert spaces.
Note that within this section $\HS\e$ is just a {notation} for some Hilbert space which (in general) differs from the space $\HS$, i.e., the sub-script $\eps$ does not mean that this space depends on a small parameter. Of course, later we will apply the results of this section to {the} $\eps$-dependent space $\HS\e\ceq\L(\Omega\e)$.

Let $\A\e$ and $\A$ be non-negative, self-adjoint, unbounded operators in $\HS\e$ and $\HS$, respectively,
and
$\a\e$ and $\a$ be the associated 
sesquilinear forms. We denote
$$\Res\e\ceq(\A\e+\Id)^{-1},\quad \Res\ceq(\A+\Id)^{-1}.$$
Along with $\HS\e$ and $\HS$, we also define the Hilbert spaces $\HS\e^k$ and $\HS^k$, $k=1,2$ via
\begin{gather} \label{scale}
\begin{array}{ll}
\HS^k\e\ceq\dom(\A\e^{k/2}),& 
  \|u\|_{\HS\e^k}\ceq\|{(\A\e+\Id)^{k/2}u}\|_{\HS\e},\\[2mm]
\HS^k\ceq\dom(\A^{k/2}),& 
  \|f\|_{\HS^k}\ceq\|{(\A+\Id)^{k/2}f}\|_{\HS}.\
\end{array}
\end{gather} 
Note that
\begin{gather}\label{scale+}
\begin{array}{ll}
\HS^1\e=\dom(\a\e),&
\|u\|_{\HS^1\e}^2=\a\e[u,u]+\|u\|^2_{\HS\e},\\[2mm]
\HS^1=\dom(\a),&
\|f\|_{\HS^1}^2=\a[f,f]+\|f\|^2_{\HS}.
\end{array}
\end{gather}
Furthermore,
\begin{gather}\label{scale:est}
\begin{array}{lcl}
\HS^2\subset\HS^1\subset\HS&\text{and}&  \|f\|_{\HS}\leq \|f\|_{\HS^1}\leq \|f\|_{\HS^2},\\[2mm]
\HS\e^2\subset\HS\e^1\subset\HS\e&\text{and}& \|u\|_{\HS\e}\leq \|  u\|_{\HS^1\e}\leq \|  u\|_{\HS^2\e}.
\end{array}
\end{gather}

It is well-known (see, e.g., \cite[Theorem~VI.3.6]{Ka66} or \cite[Theorem~VIII.25]{RS72}), that convergence of sesquilinear forms with \emph{common domain} implies norm resolvent convergence of the associated operators.
The theorem below was established by Post in \cite{P06}, and it can be regarded as a generalization of this fact  to the setting of varying spaces. 
 
\begin{theorem}[{\cite[Theorem~A.5]{P06}}]
\label{thA1}
Let $\J\e \colon \HS\to  \HS\e$, ${\wt\J\e }\colon {\HS\e}\to \HS$ be linear operators such that
\begin{align} 
\label{thA1:0}   
|(u,\J\e f)_{\HS\e} - (\wt\J\e u,f)_{\HS}|\leq \delta\e\|f\|_{\HS}\|u\|_{\HS\e},&& \forall f\in\HS,\, u\in\HS\e.
\end{align}
Also let
${ \J\e ^1} \colon {\HS^1} \to {\HS^1\e},\ 
  {\wt\J\e ^{1}} \colon {\HS\e^1}\to {\HS^1}$
be linear operators satisfying the conditions 
\begin{align}
\label{thA1:1}
\|\J\e^1 f-\J\e f\|_{\HS\e}&\leq \delta\e\|f\|_{\HS^1 },&& \forall f\in \HS^1 ,
\\[1mm] 
\label{thA1:2}
 \|\wt\J\e^1 u - \wt\J\e u \|_{\HS}&\leq 
\delta\e\|u\|_{ \HS\e},&&  \forall u\in \HS^1\e, 
\\[1mm]
\label{thA1:3}
 |\a\e[u,\J^1\e f]-\a[\wt\J^{1}\e u,f]  |&\leq 
\delta\e\|f\|_{\HS^2 }\|u\|_{\HS^1\e},&& \forall f\in \HS^2 ,\ u\in \HS^1\e  
\end{align}
with some  $\delta\e\geq 0$.
Then one has
\begin{align}\label{thA1:result}
\|\Res\e\J\e -\J\e \Res\|_{\HS\to\HS\e}\leq 4\delta\e.
\end{align}
\end{theorem}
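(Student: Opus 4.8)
The plan is to deduce the resolvent estimate \eqref{thA1:result} from the five approximation hypotheses \eqref{thA1:0}--\eqref{thA1:3} by an abstract duality-and-quasi-unitarity argument; no PDE input is needed here, only functional analysis in the scales $\HS^k$, $\HS^k\e$. First I would record the elementary fact that $\Res\e$ maps $\HS\e$ into $\HS^2\e$ with $\|\Res\e\|_{\HS\e\to\HS^2\e}\le 1$, and likewise $\|\Res\|_{\HS\to\HS^2}\le 1$, together with the self-adjointness identities $(\Res\e u,v)_{\HS\e}=(u,\Res\e v)_{\HS\e}$ and the form representations $(u,v)_{\HS\e}=\a\e[\Res\e u,w]+(\Res\e u,w)_{\HS\e}$ built into \eqref{scale+}. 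These are the only structural ingredients; the heart of the matter is to insert $\J^1\e$ and $\wt\J^1\e$ at the right places so that each of \eqref{thA1:0}--\eqref{thA1:3} is used exactly once and contributes one $\delta\e$.

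The key computation is to estimate the bilinear pairing $\big((\Res\e\J\e-\J\e\Res)f,\,u\big)_{\HS\e}$ for arbitrary $f\in\HS$, $u\in\HS\e$, and show it is bounded by $4\delta\e\|f\|_{\HS}\|u\|_{\HS\e}$; taking the supremum over unit vectors then gives \eqref{thA1:result}. The standard telescoping is: write $(\Res\e\J\e f,u)_{\HS\e}-(\J\e\Res f,u)_{\HS\e}$ and interpose the cross term $(\J\e\Res f,\Res\e u)$-type expressions. Concretely, using $\Res\e u\in\HS^1\e$ and $\Res f\in\HS^2$, one passes from $(\Res\e\J\e f,u)_{\HS\e}$ to $(\J\e f,\Res\e u)_{\HS\e}$ by self-adjointness, then to $(f,\wt\J\e\Res\e u)_{\HS}$ up to an error controlled by \eqref{thA1:0}; replacing $\wt\J\e$ by $\wt\J^1\e$ on $\Res\e u\in\HS^1\e$ costs another $\delta\e\|\Res\e u\|_{\HS\e}\le\delta\e\|u\|_{\HS\e}$ via \eqref{thA1:2}; then one rewrites $(f,\wt\J^1\e\Res\e u)_{\HS}$ using the form identity for $\Res$ in $\HS$ as $\a[\Res f,\wt\J^1\e\Res\e u]+(\Res f,\wt\J^1\e\Res\e u)_{\HS}$, and symmetrically expands $(\J\e\Res f,u)_{\HS\e}$ via the form identity for $\Res\e$ in $\HS\e$ together with \eqref{thA1:1} and \eqref{thA1:0} applied to the pair $(\Res f,u)$. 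The difference of the two form terms is precisely $\a\e[\Res\e u,\J^1\e\Res f]-\a[\wt\J^1\e\Res\e u,\Res f]$ (after matching $\J\e\Res f$ with $\J^1\e\Res f$ at cost $\delta\e\|\Res f\|_{\HS^1}\le\delta\e\|f\|_{\HS}$ by \eqref{thA1:1}), which is bounded by \eqref{thA1:3} using $\|\Res f\|_{\HS^2}\le\|f\|_{\HS}$ and $\|\Res\e u\|_{\HS^1\e}\le\|u\|_{\HS\e}$. Summing the four contributions yields the bound $4\delta\e\|f\|_{\HS}\|u\|_{\HS\e}$.

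The main obstacle, and the step requiring care, is the bookkeeping of which identity is applied to which pair of vectors, so that the regularity indices always match: \eqref{thA1:3} demands its first argument in $\HS^2$ and second in $\HS^1\e$, so it must be fed $(\Res f,\Res\e u)$ and not the reverse; \eqref{thA1:0}--\eqref{thA1:2} are used on vectors already regularized by one application of a resolvent. One must also be careful that the $(\,\cdot\,)_{\HS}$ inner-product term and the $\a[\,\cdot\,,\,\cdot\,]$ term recombine without loss: the extra $(\Res f,\wt\J^1\e\Res\e u)_{\HS}$ and $(\J\e\Res f,\Res\e u)_{\HS\e}=(\Res f,\wt\J\e\Res\e u)_{\HS}+O(\delta\e)$ pieces cancel up to one more $\delta\e$ already counted. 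A clean way to organize this is to first prove the pairing estimate for $f\in\HS^2$ (so all manipulations are legitimate), then extend to $f\in\HS$ by density of $\HS^2$ in $\HS$ and boundedness of all operators involved; since $\eqref{thA1:result}$ is a statement about bounded operators, this density step is harmless. I expect no genuine difficulty beyond this careful chaining — the proof is a few lines of inequalities once the telescoping order is fixed, and the constant $4$ is exactly the number of times $\delta\e$ enters.
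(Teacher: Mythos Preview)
The paper does not supply its own proof of Theorem~\ref{thA1}: the result is quoted verbatim from \cite[Theorem~A.5]{P06} and used as a black box, so there is nothing in the present paper to compare your argument against.

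That said, your outline is exactly the standard proof from \cite{P06}: pair $\Res\e\J\e f-\J\e\Res f$ against an arbitrary $u\in\HS\e$, use self-adjointness of $\Res\e$ and the form identities $(\,\cdot\,,u)_{\HS\e}=\a\e[\,\cdot\,,\Res\e u]+(\,\cdot\,,\Res\e u)_{\HS\e}$ and $(f,\,\cdot\,)_{\HS}=\a[\Res f,\,\cdot\,]+(\Res f,\,\cdot\,)_{\HS}$, and insert $\J^1\e$, $\wt\J^1\e$ wherever the regularity indices require it. This is correct and gives the $O(\delta\e)$ bound. One bookkeeping point deserves more care than you give it: after invoking \eqref{thA1:0}--\eqref{thA1:3} once each you are left with the inner-product remainder
\[
(\Res f,\wt\J^1\e\Res\e u)_{\HS}-(\J^1\e\Res f,\Res\e u)_{\HS\e},
\]
and your assertion that this ``cancels up to one more $\delta\e$ already counted'' is not justified as written --- estimating it requires further applications of \eqref{thA1:0}--\eqref{thA1:2} (applied now to the pair $(\Res f,\Res\e u)$, where both entries carry an extra resolvent and hence the errors are still $\le\delta\e\|f\|\,\|u\|$). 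Organizing the telescoping carefully, as in \cite{P06}, one recovers the sharp constant $4$; a looser chaining yields a slightly larger absolute constant but the same qualitative conclusion.
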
 

\begin{remark}
In  applications the operators $\J\e$ and $\wt\J\e$ usually appear in a natural way -- 
in our case $\J\e$ is defined in~\eqref{J} and $\wt\J\e$ will be defined  in~\eqref{wtJ}. 
The other two operators $\J^1\e$ and $\wt\J^1\e$ should be constructed as ``almost'' restrictions of $\J\e$ and $\wt\J\e$ to $\HS^1$ and $\HS^1\e$, respectively.
\end{remark}

Recently,  Ann\'{e} and Post \cite{AP21} extended the above result to  
a (suitably sandwiched) resolvent difference regarded  as an operator from 
 $\HS$ to $\HS\e^1$.

\begin{theorem}[{\cite[Proposition~2.5]{AP21}}]\label{thA1+}
Let the operators $\J\e$, $\wt\J\e$, $\J\e^1$, $\wt\J\e^1$ satisfy the conditions \eqref{thA1:0}--\eqref{thA1:3} with some $\delta\e\ge 0$.
Then
\begin{gather}\label{thA1+:est}
\|\Res\e\J\e -\J\e^1 \Res\|_{\HS\to\HS^1\e} \leq 6\delta\e.
\end{gather}
\end{theorem}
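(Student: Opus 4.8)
The plan is to derive Theorem~\ref{thA1+} from Theorem~\ref{thA1} together with the structure of the scale of spaces in \eqref{scale}--\eqref{scale:est}. The essential point is that once we control the resolvent difference as an operator $\HS\to\HS\e$ (which is \eqref{thA1:result}), the only additional information needed to upgrade the target space to $\HS\e^1$ is a bound on the ``energy'' of the difference $\Res\e\J\e - \J\e^1\Res$, i.e. on $\a\e[\cdot,\cdot]$ evaluated on this operator applied to an arbitrary $f\in\HS$. Indeed, by \eqref{scale+} we have $\|w\|_{\HS\e^1}^2 = \a\e[w,w] + \|w\|_{\HS\e}^2$, so it suffices to estimate $\a\e[w,w]$ for $w \ceq \Res\e\J\e f - \J\e^1\Res f$ and combine with the already-established $\L$-type bound.

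First I would fix $f\in\HS$ and, by density and continuity, reduce to $f\in\HS^2$ (note $\Res$ maps $\HS$ boundedly onto $\HS^2$, and $\Res f$ for general $f\in\HS$ is approximated in $\HS^2$ by $\Res f_k$ with $f_k\to f$; one must check $\J\e^1\Res$ is bounded $\HS\to\HS\e^1$, which follows from \eqref{thA1:1} and the boundedness of $\J\e$). For such $f$ set $g\ceq\Res f\in\HS^2$ and $w\ceq\Res\e\J\e f-\J\e^1 g$. The key computation is to expand $\a\e[w,w]$. Using that $\Res\e\J\e f\in\dom(\A\e)\subset\HS\e^1$ and $\A\e\Res\e = \Id-\Res\e$, one has $\a\e[\Res\e\J\e f, v] = (\J\e f, v)_{\HS\e} - (\Res\e\J\e f, v)_{\HS\e}$ for all $v\in\HS\e^1$. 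Taking $v=w$ and also writing $\a\e[\J\e^1 g, w]$ via hypothesis \eqref{thA1:3} (with the roles $f\leftrightarrow g$, since $g\in\HS^2$), which gives $\a\e[\J\e^1 g,w]$ close to $\a[\wt\J\e^1 w, g]$ up to $\delta\e\|g\|_{\HS^2}\|w\|_{\HS\e^1}$ — here one must be careful that \eqref{thA1:3} is stated with a $\HS^1\e$ element in the second slot and a $\HS^2$ element in the first, so the pairing has to be arranged as $\a\e[w,\J^1\e g]$ and one uses symmetry/conjugation of the forms. Combining, $\a\e[w,w]$ equals a sum of inner-product terms each of which is bounded, using \eqref{thA1:0}, \eqref{thA1:2}, the resolvent identity for $\A$, and the already-proven $\L$-bound $\|w\|_{\HS\e}\le$ (something like $5\delta\e\|f\|_{\HS}$ after accounting for the $\J\e^1$ vs $\J\e$ discrepancy via \eqref{thA1:1}), by $C\delta\e\|f\|_{\HS}\|w\|_{\HS\e^1}$ plus terms of order $\delta\e^2\|f\|_{\HS}^2$.

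Putting these together yields an inequality of the shape $\|w\|_{\HS\e^1}^2 \le \a\e[w,w] + \|w\|_{\HS\e}^2 \le C\delta\e\|f\|_{\HS}\|w\|_{\HS\e^1} + C\delta\e^2\|f\|_{\HS}^2$, which by the elementary fact ($a^2\le ba+c^2 \Rightarrow a\le b+c$) gives $\|w\|_{\HS\e^1}\le C\delta\e\|f\|_{\HS}$, and tracking the constants carefully produces the factor $6$ claimed in \eqref{thA1+:est}. The main obstacle, and the part requiring genuine care rather than routine manipulation, is the bookkeeping of exactly which test-function regularity each hypothesis \eqref{thA1:0}--\eqref{thA1:3} permits: \eqref{thA1:3} demands $f\in\HS^2$ in the first argument but only $\HS^1\e$ in the second, so the symmetric expansion of $\a\e[w,w]$ forces us to split $w = \Res\e\J\e f - \J\e^1\Res f$ asymmetrically — treat one copy of $w$ via the $\A\e$-resolvent identity (needing only $w\in\HS\e^1$) and leave the $\J\e^1 g$ piece to be paired against the smooth $g$ — and then assemble the cross terms so that every application of \eqref{thA1:3} has its arguments in the correct spaces. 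Getting the precise constant $6$ (rather than a larger multiple of $\delta\e$) is the other delicate point, handled by not wasting factors when invoking the $\L$-bound and by the sharp form of the quadratic inequality above.
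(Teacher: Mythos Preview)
The paper does not give its own proof of this statement: Theorem~\ref{thA1+} is simply quoted from \cite[Proposition~2.5]{AP21}, with Remark~\ref{rem:AP21} noting that the extra hypotheses \eqref{thA1:4} from that reference are not needed for \eqref{thA1+:est}. So there is no in-paper argument to compare against; I can only assess your sketch on its own.

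Your outline is correct and is the standard route (and, to my knowledge, the one in \cite{AP21}): set $g=\Res f\in\HS^2$, $w=\Res\e\J\e f-\J^1\e g$, and expand $\|w\|_{\HS^1\e}^2=\a\e[w,w]+\|w\|_{\HS\e}^2$. Using $\a\e[\Res\e\J\e f,w]=(\J\e f-\Res\e\J\e f,w)_{\HS\e}$ (since $\Res\e\J\e f\in\dom(\A\e)$) and adding $\|w\|_{\HS\e}^2$ collapses the right-hand side to $(\J\e f-\J^1\e g,\,w)_{\HS\e}-\a\e[\J^1\e g,w]$. Hypothesis \eqref{thA1:3}, applied with $u=w\in\HS^1\e$ and with $g\in\HS^2$ in the second slot (after using hermitian symmetry $\a\e[\J^1\e g,w]=\overline{\a\e[w,\J^1\e g]}$, exactly as you say), replaces $\a\e[\J^1\e g,w]$ by $(f-g,\wt\J^1\e w)_{\HS}$ at cost $\delta\e\|f\|_{\HS}\|w\|_{\HS^1\e}$. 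The remaining inner-product terms are handled by \eqref{thA1:0}, \eqref{thA1:1}, \eqref{thA1:2}, each contributing a factor $\delta\e\|f\|_{\HS}\|w\|_{\HS\e}\le\delta\e\|f\|_{\HS}\|w\|_{\HS^1\e}$; collecting them gives $\|w\|_{\HS^1\e}^2\le 6\delta\e\|f\|_{\HS}\|w\|_{\HS^1\e}$, hence the constant $6$.

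Two minor remarks. First, the density reduction to $f\in\HS^2$ is unnecessary: for any $f\in\HS$ one already has $g=\Res f\in\HS^2$, and that is the only place $\HS^2$-regularity is used. Second, invoking Theorem~\ref{thA1} as an intermediate step (your ``already-proven $\L$-bound'') is not needed and in fact mildly wasteful for the constant; the direct expansion above delivers $6$ without passing through \eqref{thA1:result}.
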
 
 
\begin{remark}\label{rem:AP21}
In fact, Proposition~2.5 in \cite{AP21} is formulated as follows:
\emph{the estimate \eqref{thA1+:est} holds provided the forms 
$\a\e$ and $\a$ are $\delta\e$-{quasi-unitarily equivalent}}. The latter means that
there exist linear operators  
$\J\e \colon \HS\to  \HS\e$, ${\wt\J\e }\colon {\HS\e}\to \HS$,
${ \J\e ^1} \colon {\HS^1} \to {\HS^1\e}$, ${\wt\J\e ^{1}} \colon {\HS\e^1}\to {\HS^1}$
such that  the conditions \eqref{thA1:0}-\eqref{thA1:3} are fulfilled, and moreover, one has
\begin{gather}\label{thA1:4}
\|f\|_{\HS}\leq (1+\delta\e)\|\J\e f\|_{\HS\e} ,
\quad
\|f-\wt\J\e \J\e f\|_{\HS}\leq \delta\e\|f\|_{\HS^1},
\quad
\|u-\J\e \wt\J\e u\|_{\HS\e}\leq \delta\e\|u\|_{\HS\e^1}.
\end{gather}
However, tracing the proof in \cite[Proposition~2.5]{AP21}, one can easily see that 
the estimates
\eqref{thA1:4} are not  utilized for the deduction of \eqref{thA1+:est}.

The  assumptions \eqref{thA1:4} are  required  to deduce other versions of norm resolvent
convergence. Namely, if  conditions \eqref{thA1:0}--\eqref{thA1:3} are fulfilled,
and, moreover, \eqref{thA1:4} are valid, then
\begin{gather*}
\|\Res\e -\J\e \Res \wt \J\e \|_{\HS\e\to\HS\e}\leq C\delta\e,\quad
\|\wt\J\e\Res\e\J\e - \Res  \|_{\HS\to\HS }\leq C\delta\e;
\end{gather*}
see~\cite[Theorem~A.10]{P06}.  
\end{remark}

\begin{remark}
Tracing the proof of \cite[Theorem~A.5]{P06} one observes that
the estimate \eqref{thA1:result}
remains valid if \eqref{thA1:3} is substituted 
by the weaker condition
\begin{gather}\label{weaker}
  \left|\a\e[u,\J^1\e f]-\a[\wt\J^{1}\e u,f] \right|\leq 
  \delta\e\|f\|_{\HS^2 }\|u\|_{\HS^2\e},\quad 
  \forall f\in \HS^2 ,\ u\in \HS^2\e.
\end{gather} 
Nevertheless, in most of the  applications one is able to establish stronger estimate~\eqref{thA1:3}.
Note that in Theorem~\ref{thA1+} condition \eqref{thA1:3} cannot be replaced by \eqref{weaker}.
\end{remark}

The last theorem gives the estimate for the distance between the spectra of $\A\e$ and $\A$
in the metrics $\widetilde{d_H}(\cdot,\cdot)$.
Recall that the distances $d_H(\cdot,\cdot)$ and $\widetilde{d_H}(\cdot,\cdot)$ are given in \eqref{dH} and \eqref{dHbar}, respectively. Note that due to spectral mapping theorem one has
\begin{gather}
\label{SMT}
\widetilde{d_H}(\sigma(\A\e),\sigma(\A))=d_H(\sigma(\Res\e),\sigma(\Res)).
\end{gather}
It is well-known that the norm convergence of bounded self-adjoint operators in a \textit{fixed} Hilbert space implies the Hausdorff convergence of their spectra.
Indeed, let $\Res\e$ and $\Res$ be bounded normal  operators in a Hilbert space $\HS$, then \cite[Lemma~A.1]{HN99}
$$d_{H}(\sigma(\Res\e),\sigma(\Res))\leq \|\Res\e-\Res\|_{\HS\to\HS}.$$
The theorem below is an analogue of this result for the case of operators acting in different Hilbert spaces.
In the present form it was established recently by the first author and Post in \cite{KP21} (see Remark~\ref{rem:tau}), a slightly weaker  version was proven in \cite{CK19}.
 
\begin{theorem}[{\cite[Theorem~3.5]{KP21}}]\label{thA2}
Let  
 $\J\e \colon\HS\to {\HS\e}$, $\wt\J\e \colon\HS\e\to {\HS} $ be  linear bounded operators satisfying 
\begin{align}
\label{thA2:1}
\|\Res\e\J\e - \J\e \Res \|_{\HS\to \HS\e}&\leq \rho\e , 
\\
\label{thA2:2}
\|\wt\J\e \Res\e - \Res\wt\J\e  \|_{\HS\e\to  \HS}&\leq \wt\rho\e ,
\end{align}
and, moreover,
\begin{align}
\label{thA2:3}
\|f\|^2_{ \HS}&\leq \mu\e \|\J\e  f\|^2_{\HS\e}+\nu\e \,  \a[f,f],\quad \forall f\in \dom( \a),\\
\label{thA2:4}
\|u\|^2_{\HS\e}&\leq \wt\mu\e \|\wt\J\e  u\|^2_{\HS}+\wt\nu\e \,  
\a\e[u,u],\quad \forall u\in \dom( \a\e)
\end{align}
for some positive constants $\rho\e ,\,\mu\e ,\,\nu\e ,\, \wt\rho\e ,\,\wt\mu\e ,\,\wt\nu\e  $.
Then   one has
\begin{gather}\label{thA2:est}
\widetilde{d_H}\left(\sigma(\A\e),\,\sigma(\A)\right)\leq 
\max
\left\{
{\nu\e\over 2}+\sqrt{{\nu\e^2\over 4}+\rho\e^2\mu\e};\,
{\wt\nu\e\over 2}+\sqrt{{\wt\nu\e^2\over 4}+\wt\rho\e^2\wt\mu\e}
\right\}.
\end{gather}
\end{theorem}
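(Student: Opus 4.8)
The final statement to prove is Theorem~\ref{thA2}, the abstract spectral-distance estimate from~\cite{KP21}. By the spectral mapping identity~\eqref{SMT}, it suffices to bound $d_H(\sigma(\Res\e),\sigma(\Res))$, so the plan is to split this Hausdorff distance into its two one-sided pieces and bound each by a symmetric argument. The key device is the variational characterization of the resolvent: for a non-negative self-adjoint operator, $\lambda\in\sigma(\Res)$ with $\lambda\in(0,1]$ precisely when $\inf_{f}\|(\A+\Id)f-\lambda^{-1}f\|$-type quantities vanish, or more conveniently, one works with the quadratic form. Concretely, I would use that $\mu\in[0,1)$ lies outside $\sigma(\Res)$ iff there is a gap, and quantify approximate spectral membership via test vectors: if $\|(\Res-\mu)v\|\le\theta\|v\|$ for some $v\ne 0$, then $\dist(\mu,\sigma(\Res))\le\theta$.

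First I would fix $\lambda\in\sigma(\Res\e)$ and produce a point of $\sigma(\Res)$ within the claimed distance. Since $\Res\e$ is self-adjoint, there is a near-eigenvector: $u\in\HS\e$, $\|u\|_{\HS\e}=1$, with $\|(\Res\e-\lambda)u\|_{\HS\e}$ arbitrarily small. Apply $\wt\J\e$ and estimate $\|(\Res-\lambda)\wt\J\e u\|_{\HS}$: write $\Res\wt\J\e u - \lambda\wt\J\e u = (\Res\wt\J\e - \wt\J\e\Res\e)u + \wt\J\e(\Res\e-\lambda)u$, the first term controlled by $\wt\rho\e$ via~\eqref{thA2:2} and the second small. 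This gives $\dist(\lambda,\sigma(\Res))\lesssim \wt\rho\e/\|\wt\J\e u\|_{\HS} + (\text{small})$, so the obstacle is a lower bound on $\|\wt\J\e u\|_{\HS}$. That is exactly what~\eqref{thA2:4} provides: $1=\|u\|^2_{\HS\e}\le\wt\mu\e\|\wt\J\e u\|^2_{\HS}+\wt\nu\e\,\a\e[u,u]$, and since $u$ is a near-eigenvector of $\Res\e$ with eigenvalue $\lambda$, one has $\a\e[u,u]=\|(\A\e+\Id)^{1/2}u\|^2-\|u\|^2 \approx \lambda^{-1}-1$; but one must be careful because $\lambda$ can be small, making $\a\e[u,u]$ large. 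The resolution is to note that if $\lambda$ is small then $\dist(\lambda,\sigma(\Res))$ is automatically small (both spectra of resolvents live in $[0,1]$ and accumulate only at $0$), so the interesting regime is $\lambda$ bounded below, where $\a\e[u,u]$ is controlled; optimizing the trade-off between the $\wt\nu\e$-term and the $\wt\rho\e^2\wt\mu\e$-term over the threshold yields the expression $\tfrac{\wt\nu\e}{2}+\sqrt{\tfrac{\wt\nu\e^2}{4}+\wt\rho\e^2\wt\mu\e}$.

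Symmetrically, for $\lambda\in\sigma(\Res)$ I would take a near-eigenvector $f\in\HS$, push it forward by $\J\e$, use~\eqref{thA2:1} to control $\|(\Res\e-\lambda)\J\e f\|_{\HS\e}$, and invoke~\eqref{thA2:3} to bound $\|\J\e f\|_{\HS\e}$ from below in terms of $\|f\|_{\HS}=1$ and $\a[f,f]$; the same optimization produces $\tfrac{\nu\e}{2}+\sqrt{\tfrac{\nu\e^2}{4}+\rho\e^2\mu\e}$. Taking the maximum of the two one-sided bounds gives~\eqref{thA2:est}. I expect the main technical obstacle to be the small-$\lambda$ regime: the form value $\a[f,f]$ (or $\a\e[u,u]$) blows up like $\lambda^{-1}$ there, so the quantitative statement must be organized so that the threshold where this blow-up matters is precisely the threshold below which the target distance bound holds trivially. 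Handling this cleanly — rather than by an ad hoc case split — is what makes the clean closed-form estimate~\eqref{thA2:est} emerge, and it is where I would spend the most care; everything else is a routine chase through~\eqref{thA2:1}–\eqref{thA2:4} together with the elementary fact that for self-adjoint $T$, $\dist(\lambda,\sigma(T))\le\|(T-\lambda)v\|/\|v\|$.
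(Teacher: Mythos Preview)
Your sketch is essentially correct and follows the expected route. Note, however, that the paper does not actually prove this theorem: it is quoted from \cite[Theorem~3.5]{KP21}, and the only argument supplied here is Remark~\ref{rem:tau}, which records that \cite{KP21} yields the parametrized bound~\eqref{rem:tau:est} and that minimizing it over $\tau,\wt\tau\in(0,1)$ produces the closed form~\eqref{thA2:est}. Your Weyl-sequence argument is precisely how one obtains~\eqref{rem:tau:est}: fixing a threshold $\tau\in(0,1)$, the case $\lambda\le \nu\e/(1-\tau)$ is handled trivially because $0\in\overline{\sigma(\Res\e)}$ (this uses that $\A\e$ is unbounded, as assumed in the setup of Section~\ref{sec:3}), while for $\lambda>\nu\e/(1-\tau)$ the inequality~\eqref{thA2:3} gives $\|\J\e f\|^2_{\HS\e}\ge \tau/\mu\e$ and hence $\dist(\lambda,\sigma(\Res\e))\le \rho\e\sqrt{\mu\e/\tau}$; the symmetric argument with $\wt\J\e$ gives the other half.

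One point to tighten: to invoke~\eqref{thA2:3} you need your approximate eigenvector $f$ to lie in $\dom(\a)$, which an arbitrary Weyl sequence for the bounded operator $\Res$ need not. The standard fix is to take $f_n\ceq \Res h_n/\|\Res h_n\|_{\HS}$ for a Weyl sequence $h_n$; then $f_n\in\dom(\A)\subset\dom(\a)$, one still has $\|(\Res-\lambda)f_n\|_{\HS}\to 0$, and $\a[f_n,f_n]+1=(\Res^{-1}f_n,f_n)_{\HS}\to\lambda^{-1}$, which is exactly the form-control you use.
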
 

\begin{remark}
\label{rem:tau}
In fact, in \cite{KP21} the obtained estimate reads
\begin{gather}\label{rem:tau:est}
\widetilde{d_H}\left(\sigma(\A\e),\,\sigma(\A)\right) 
\leq 
\max\left\{
\rho\e\sqrt{\mu\e\over \tau};\,{\nu\e\over 1-\tau };\,
\wt\rho\e\sqrt{\wt\mu\e\over \wt\tau};\,{\wt\nu\e\over 1-\wt\tau }
\right\},\
\forall\tau,\wt\tau\in (0,1).
\end{gather}
Minimizing
the right-hand-side of \eqref{rem:tau:est} over $\tau,\wt\tau$, 
one easily arrives at the estimate \eqref{thA2:est}.
\end{remark}

\section{Useful estimates}\label{sec:4}

In this section we collect several estimates which will be used in the proofs of the main theorems.
The first result was established in \cite{MK06}.
Here and in what follows the notation
$|D|$ stands for the Lebesgue measure of the domain  $D\subset\mathbb{R}^n$.

\begin{lemma}[{\cite[Lemma~4.9]{MK06}}]
  \label{lemma:MK06}
  Let $D\subset\R^n$ be a parallelepiped, and let
  $D_1,D_2\subset D$ be measurable subsets with $|D_2|\not= 0$.  Then
  \begin{equation*}\forall g\in \H^1(D):\quad
    \|g\|^2_{\L(D_1)}
    \leq \frac {2|D_1|}{|D_2|}\|g\|^2_{\L(D_2)} +
    C \frac {(\mathrm{diam}(D))^{n+1}|D_1|^{1/n}}{|D_2|}\|\nabla g\|^2_{\L(D)},
  \end{equation*}
  where the constant $C>0$ depends only on the dimension $n$.
\end{lemma}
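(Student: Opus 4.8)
\textbf{Proof strategy for Lemma~\ref{lemma:MK06}.}
The plan is to reduce the estimate to the special case $D=\square$ (the unit cube) by scaling, and then to handle the unit-cube case by a compactness/contradiction argument combined with a quantitative Poincaré-type inequality. First I would normalize: given the parallelepiped $D$, let $L\ceq\mathrm{diam}(D)$ and rescale $D$ to a parallelepiped $\widehat D$ of unit diameter via the affine map $x\mapsto x/L$ (composed, if one wants a genuinely fixed reference shape, with a linear map bringing $\widehat D$ to $\square$; the distortion constants of such a map are controlled purely by $n$ since every parallelepiped of unit diameter is comparable to the unit cube with $n$-dependent constants only). Under $x\mapsto Lx$ the quantities transform as $\|g\|^2_{\L(D_j)}=L^n\|\widehat g\|^2_{\L(\widehat D_j)}$, $\|\nabla g\|^2_{\L(D)}=L^{n-2}\|\nabla\widehat g\|^2_{\L(\widehat D)}$, and $|D_j|=L^n|\widehat D_j|$, so the claimed inequality is scale-invariant up to the explicit power $L^{n+1}=(\mathrm{diam}(D))^{n+1}$ that appears on the gradient term; tracking these powers is the routine bookkeeping that explains the exponent $n+1$ in the statement.

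The core is therefore the unit-cube statement: for measurable $D_1,D_2\subset\square$ with $|D_2|\neq 0$,
\begin{gather*}
\|g\|^2_{\L(D_1)}\leq \frac{2|D_1|}{|D_2|}\|g\|^2_{\L(D_2)}+C\frac{|D_1|^{1/n}}{|D_2|}\|\nabla g\|^2_{\L(\square)},\quad\forall g\in\H^1(\square).
\end{gather*}
To prove this I would write $g=\bar g+(g-\bar g)$, where $\bar g\ceq |D_2|^{-1}\int_{D_2}g\d x$ is the mean of $g$ over $D_2$, and estimate the two pieces separately. For the constant part, $\|\bar g\|^2_{\L(D_1)}=|D_1|\,|\bar g|^2$ and, by Cauchy--Schwarz, $|\bar g|^2\le |D_2|^{-1}\|g\|^2_{\L(D_2)}$, which already produces the first term on the right with constant $|D_1|/|D_2|$ (the factor $2$ gives room for splitting). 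For the oscillatory part $w\ceq g-\bar g$ one has the Poincaré inequality on the cube $\|w\|^2_{\L(\square)}\le C_n\|\nabla g\|^2_{\L(\square)}$ together with a bound for $|\bar g|$ in terms of the full $\L(\square)$-norm, so that $\|w\|^2_{\L(\square)}\le C_n'\big(\|\nabla g\|^2_{\L(\square)}+\dots\big)$; then $\|w\|^2_{\L(D_1)}$ must be bounded by $C|D_1|^{1/n}\|\nabla g\|^2_{\L(\square)}$. This last step is the only genuinely nontrivial point: controlling the $\L$-norm of a function on a small set $D_1$ by $|D_1|^{1/n}$ times a gradient norm. I expect to obtain it from the embedding $\H^1(\square)\hookrightarrow \L^p(\square)$ for a suitable $p>2$ (any $p<\frac{2n}{n-2}$ works for $n\ge3$, and any $p<\infty$ for $n=2$) followed by Hölder's inequality on $D_1$: $\|w\|^2_{\L(D_1)}\le |D_1|^{1-2/p}\|w\|^2_{\L^p(\square)}$. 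Choosing $p$ so that $1-2/p=1/n$ — i.e. $p=\frac{2n}{n-1}$, which indeed satisfies $p<\frac{2n}{n-2}$ and $p\ge 2$ — yields exactly the power $|D_1|^{1/n}$, and the Sobolev constant depends only on $n$. The hard part is precisely this choice of Sobolev exponent and verifying it lies in the admissible range for all $n\ge 2$; everything else is scaling and the elementary mean-value splitting.
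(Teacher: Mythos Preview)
The paper does not prove this lemma; it is quoted verbatim from \cite[Lemma~4.9]{MK06}, so there is no in-paper argument to compare against. That said, your sketch has two gaps worth flagging.

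First, the reduction to the unit cube is not valid as stated. The claim that ``every parallelepiped of unit diameter is comparable to the unit cube with $n$-dependent constants only'' is false: a rectangular box $[0,1]\times[0,\delta]^{n-1}$ has diameter close to~$1$ but its Sobolev embedding constants blow up as $\delta\to 0$. Consequently your Sobolev--H\"older step would produce a constant depending on the aspect ratio of $D$, not just on~$n$. (In the paper the lemma is only applied to cubes $\square\ie$, for which your argument is fine; but the lemma as stated covers general parallelepipeds.) The proof in \cite{MK06} avoids this by a direct line-integral argument: one writes $g(x)-g(y)=\int_0^1\nabla g(y+t(x-y))\cdot(x-y)\,\mathrm{d}t$ for $x\in D_1$, $y\in D_2$, averages over $y\in D_2$ and $x\in D_1$, and bounds the resulting triple integral by a change of variables. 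This uses only convexity and the diameter, so the constant is genuinely shape-independent.

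Second, even on the unit cube your treatment of the oscillatory part $w=g-\bar g$ is incomplete. You assert $\|w\|^2_{\L(D_1)}\le C|D_1|^{1/n}\|\nabla g\|^2_{\L(\square)}$, but the target inequality carries a factor $|D_2|^{-1}$ in front of the gradient term, and your bound does not. The issue is that $w$ has mean zero over $D_2$, not over $\square$, so the Poincar\'e inequality on $\square$ does not directly control $\|w\|_{\L(\square)}$. One can repair this: from $\int_{D_2}w=0$ one gets $|\langle w\rangle_\square|\le |D_2|^{-1/2}\|w-\langle w\rangle_\square\|_{\L(\square)}\le C|D_2|^{-1/2}\|\nabla g\|_{\L(\square)}$, hence $\|w\|^2_{\H^1(\square)}\le C|D_2|^{-1}\|\nabla g\|^2_{\L(\square)}$, and then your Sobolev--H\"older step with $p=2n/(n-1)$ gives the correct $C|D_1|^{1/n}|D_2|^{-1}\|\nabla g\|^2_{\L(\square)}$. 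So this gap is fixable, but the first one is not within your framework.
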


The next lemma is the  Poincar\'e inequality for the cube $\square\ie$.
 
\begin{lemma}
\label{lemma:Poincare}
One has 
\begin{align}
\label{Poincare}
\forall g\in \H^1  (\square\ie)\text{ with }\int_{\square\ie}g(x)\d x=0:&\quad 
\|g  \|^2_{\L(\square\ie)}\leq \pi^{-2}\eps^2\|\nabla g\|^2_{\L(\square\ie)}.
\end{align}
\end{lemma}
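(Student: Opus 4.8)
The plan is to reduce to a fixed reference cube and then diagonalize via the Neumann eigenbasis. First I would note that both sides of \eqref{Poincare} are invariant under translations of the underlying cube, so it suffices to prove the statement with $Q\ceq(0,\eps)^n$ in place of $\square\ie$ (which is itself a cube of side $\eps$): namely, $\|g\|_{\L(Q)}^2\le\pi^{-2}\eps^2\|\nabla g\|_{\L(Q)}^2$ whenever $g\in\H^1(Q)$ has zero mean on $Q$; the general case follows by applying this to the appropriate translate of $g$.

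Next I would invoke the explicit spectral decomposition of the Neumann Laplacian $-\Delta_N$ on $Q$, i.e.\ the self-adjoint operator in $\L(Q)$ associated with the form $\int_Q\nabla u\cdot\overline{\nabla v}\d x$ on the form domain $\H^1(Q)$. Its spectrum is purely discrete and consists of the eigenvalues $\lambda_k\ceq\pi^2\eps^{-2}|k|^2$, $k\in\Z_{\ge0}^n$ (with $|k|^2=k_1^2+\dots+k_n^2$), the corresponding eigenfunctions being the $\L(Q)$-normalizations $\varphi_k$ of $x\mapsto\prod_{j=1}^n\cos(\pi k_j x_j/\eps)$; the family $\{\varphi_k\}_{k\in\Z_{\ge0}^n}$ is an orthonormal basis of $\L(Q)$ and $\varphi_0$ is the constant $\eps^{-n/2}$. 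Expanding $g=\sum_k c_k\varphi_k$ with $c_k=(g,\varphi_k)_{\L(Q)}$, the spectral theorem yields $\|g\|_{\L(Q)}^2=\sum_k|c_k|^2$ and $\|\nabla g\|_{\L(Q)}^2=\sum_k\lambda_k|c_k|^2$. The hypothesis $\int_Q g\d x=0$ is equivalent to $c_0=0$, so, since $\lambda_k\ge\pi^2\eps^{-2}$ for every $k\neq0$, I would conclude
\[
\|\nabla g\|_{\L(Q)}^2=\sum_{k\neq0}\lambda_k|c_k|^2\ \ge\ \frac{\pi^2}{\eps^2}\sum_{k\neq0}|c_k|^2=\frac{\pi^2}{\eps^2}\|g\|_{\L(Q)}^2,
\]
which is exactly the claim after rearranging.

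The only genuinely delicate point is the Parseval-type identity $\|\nabla g\|_{\L(Q)}^2=\sum_k\lambda_k|c_k|^2$ for an arbitrary $g\in\H^1(Q)$ rather than just for finite combinations of eigenfunctions; this is precisely the statement that $\H^1(Q)$ is the form domain of $-\Delta_N$ and that the quadratic form is diagonalized by the eigenbasis, which follows from the spectral theorem together with discreteness of the spectrum. If one prefers to avoid this, the fallback I would use is an induction on the dimension: prove the one-dimensional Wirtinger inequality $\|h\|_{\L(0,\eps)}^2\le\pi^{-2}\eps^2\|h'\|_{\L(0,\eps)}^2$ for mean-zero $h\in\H^1(0,\eps)$ via the cosine expansion (first for smooth $h$, then by density), then decompose $g=\bar g+(g-\bar g)$ where $\bar g(x_1)$ is the average of $g(x_1,\cdot)$ over $x_2,\dots,x_n$, apply the one-dimensional inequality to $\bar g$ and the inductive hypothesis in dimension $n-1$ to $g-\bar g$ (which has zero mean in $x_2,\dots,x_n$ for a.e.\ $x_1$), and sum the two estimates, using that the pieces are mutually orthogonal in $\L(Q)$ and also after differentiation in each coordinate.
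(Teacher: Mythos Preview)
Your proof is correct and follows essentially the same approach as the paper: both arguments rest on the fact that the first nonzero Neumann eigenvalue of the cube $\square\ie$ equals $(\pi/\eps)^2$, with the paper invoking this via the min-max characterization and you via the explicit cosine eigenbasis and Parseval. The translation reduction and the alternative inductive route are unnecessary embellishments but do no harm.
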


\begin{proof}
Let $\lambda\e$  be the second (first non-zero)
eigenvalue of the Neumann Laplacian on $\square\ie$.
It is easy to compute that
$\lambda\e= \left(\frac \pi \eps \right)^2$. Moreover,
by the min-max principle 
\begin{gather*}
  \lambda\e
  =\min\left\{\frac{\|\nabla g\|^2_{\L(\square\ie)}}{\|g\|^2_{\L(\square\ie)}}:\ 
    g\in\H^1(\square\ie)\setminus\{0\},\ \int_{\square\ie}g(x)\d x=0\right\},
\end{gather*}
whence we immediately obtain the 
inequality \eqref{Poincare}.
\end{proof}
 
Recall that $n\ge 2$ stands for the dimension,  $\square$  is a unit cube in $\R^n$. 
By the Sobolev embedding theorem (see, e.g., \cite[Theorem~5.4 and Remark~5.5(6)]{Ad75})
the space $\H^2(\square)$ is embedded continuously into the space
$\mathsf{L}^{p}(\square)$ 
provided $p$ satisfies 
\begin{gather}
\label{p}
1\leq p\leq \frac{2n}{n-4}\text{\; as\; }n\geq 5,\quad
1\leq p<\infty\text{\; as\; }n=4,\quad
1\le p\le \infty\text{\; as\; }n=2,3.
\end{gather} 
Furthermore, the space $\H^2(\square)$ is embedded continuously into
$\mathsf{W}^{1,p}(\square)$ 
provided $p$ satisfies 
\begin{gather}
\label{p:grad}
1\leq p\leq \frac{2n}{n-2}\text{\; as\; }n\geq 3,\quad
1\leq p<\infty\text{\; as\; }n=2.
\end{gather}
In the following, the constants $C_{n,p}$ (for $p$ satisfying \eqref{p})  and $\wt C_{n,p}$ (for $p$ satisfying \eqref{p:grad}) stand for the norms of these embeddings, respectively.
In  the  lemma below we give two Sobolev-type inequalities for the re-scaled cubes $\square\ie\cong\eps\square$.

\begin{lemma}\label{lemma:sobolev}
One has
\begin{gather}\label{sobolev}
\forall g\in \mathsf{H}^2(\square\ie)\text{ with }\int_{\square\ie}g(x)\d x=0:\ 
\|g\|_{\mathsf{L}^p(\square\ie)}\leq C\cdot C_{n,p}\cdot \eps^{n/p+(2-n)/2}\|g\|_{\mathsf{H}^2(\square\ie)}
\end{gather}
provided $p$ satisfies \eqref{p} (for $p=\infty$ one has the convention $1/p=0$). Moreover,
\begin{gather}\label{sobolev:grad}
\forall f\in \mathsf{H}^2(\square\ie):\ 
\|\nabla f\|_{\mathsf{L}^p(\square\ie)}\leq C\cdot\wt C_{n,p}\cdot\eps^{n/p - n/2}\|f\|_{\mathsf{H}^2(\square\ie)}
\end{gather}
provided $p$ satisfies \eqref{p:grad}. The constant $C$ in \eqref{sobolev}--\eqref{sobolev:grad} equals $(1+\pi^{-2})^{1/2}$.
\end{lemma}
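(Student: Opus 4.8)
The plan is to transplant the two given Sobolev embeddings on the unit cube $\square$ to the rescaled cube $\square\ie\cong\eps\square$ by means of the change of variables $x=x\ie+\eps y$ ($y\in\square$), to keep careful track of the powers of $\eps$ this produces, and then to invoke the Poincar\'e inequality of Lemma~\ref{lemma:Poincare} to absorb the $\mathsf{L}^2$-part of the $\mathsf{H}^2(\square\ie)$-norm into its gradient part; this last step is where the zero-mean hypothesis in \eqref{sobolev} enters, and it is what makes the constant come out to be exactly $(1+\pi^{-2})^{1/2}$. Concretely, for $g\in\mathsf{H}^2(\square\ie)$ I would set $G(y)\ceq g(x\ie+\eps y)$, $y\in\square$. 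A direct change of variables gives $\|g\|_{\mathsf{L}^p(\square\ie)}=\eps^{n/p}\|G\|_{\mathsf{L}^p(\square)}$ (with the convention $1/p=0$ for $p=\infty$), together with the scaling relations
\begin{gather*}
\|g\|^2_{\L(\square\ie)}=\eps^{n}\|G\|^2_{\L(\square)},\qquad
\|\nabla g\|^2_{\L(\square\ie)}=\eps^{n-2}\|\nabla G\|^2_{\L(\square)},
\end{gather*}
and the analogous relation with the factor $\eps^{n-4}$ for the $\L$-norms of the second-order derivatives.

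For the estimate \eqref{sobolev} I would then apply $\|G\|_{\mathsf{L}^p(\square)}\le C_{n,p}\|G\|_{\mathsf{H}^2(\square)}$ and expand $\|G\|^2_{\mathsf{H}^2(\square)}$ using the scaling identities above. Every term except the one coming from $\|G\|^2_{\L(\square)}=\eps^{-n}\|g\|^2_{\L(\square\ie)}$ already carries the prefactor $\eps^{2-n}$ (recall $0<\eps\le\eps_0<1$, hence $\eps^{4-n}\le\eps^{2-n}$), while for that one term Lemma~\ref{lemma:Poincare} yields $\eps^{-n}\|g\|^2_{\L(\square\ie)}\le\pi^{-2}\eps^{2-n}\|\nabla g\|^2_{\L(\square\ie)}$. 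Collecting the coefficients gives $\|G\|^2_{\mathsf{H}^2(\square)}\le(1+\pi^{-2})\,\eps^{2-n}\,\|g\|^2_{\mathsf{H}^2(\square\ie)}$, and therefore
\begin{gather*}
\|g\|_{\mathsf{L}^p(\square\ie)}=\eps^{n/p}\|G\|_{\mathsf{L}^p(\square)}\le C_{n,p}\,(1+\pi^{-2})^{1/2}\,\eps^{n/p+(2-n)/2}\,\|g\|_{\mathsf{H}^2(\square\ie)},
\end{gather*}
which is \eqref{sobolev} with $C=(1+\pi^{-2})^{1/2}$.

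For \eqref{sobolev:grad} the function $f$ need not have zero mean, so I would first pass to $g\ceq f-\overline f$, where $\overline f\ceq|\square\ie|^{-1}\int_{\square\ie}f\d x$. Then $\nabla g=\nabla f$, $\int_{\square\ie}g\d x=0$, and $\|g\|_{\mathsf{H}^2(\square\ie)}\le\|f\|_{\mathsf{H}^2(\square\ie)}$, because $\overline f$ is the $\L(\square\ie)$-orthogonal projection of $f$ onto the constants (so $\|g\|_{\L(\square\ie)}\le\|f\|_{\L(\square\ie)}$ while the first- and second-order terms are unchanged). The change of variables gives $\|\nabla f\|_{\mathsf{L}^p(\square\ie)}=\|\nabla g\|_{\mathsf{L}^p(\square\ie)}=\eps^{n/p-1}\|\nabla G\|_{\mathsf{L}^p(\square)}$; combining $\|\nabla G\|_{\mathsf{L}^p(\square)}\le\|G\|_{\mathsf{W}^{1,p}(\square)}\le\wt C_{n,p}\|G\|_{\mathsf{H}^2(\square)}$ with the bound $\|G\|_{\mathsf{H}^2(\square)}\le(1+\pi^{-2})^{1/2}\eps^{(2-n)/2}\|g\|_{\mathsf{H}^2(\square\ie)}$ obtained above (applied to $g$) and the arithmetic identity $n/p-1+(2-n)/2=n/p-n/2$ yields \eqref{sobolev:grad}, again with $C=(1+\pi^{-2})^{1/2}$.

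I do not expect any genuine obstacle here: the whole argument reduces to a change of variables combined with the already-available embeddings and the Poincar\'e inequality. The only delicate point is the bookkeeping that has to reproduce exactly the asserted powers of $\eps$ and exactly the constant $(1+\pi^{-2})^{1/2}$; this forces me to (i) use $0<\eps<1$ in the correct direction when comparing $\eps^{4-n}$ with $\eps^{2-n}$, (ii) apply Lemma~\ref{lemma:Poincare} with its correctly scaled constant $\pi^{-2}\eps^2$ on the cube $\square\ie$, and (iii) in the proof of \eqref{sobolev:grad} remember that subtracting the mean leaves $\|\nabla f\|$ untouched and does not enlarge the full $\mathsf{H}^2$-norm.
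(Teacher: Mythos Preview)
Your proposal is correct and follows essentially the same approach as the paper: a change of variables to the unit cube, the Sobolev embedding there, and the Poincar\'e inequality on $\square\ie$ to control the zero-order term (subtracting the mean in the case of \eqref{sobolev:grad}). The paper in fact only writes out the proof of \eqref{sobolev:grad}, citing \cite{KP18} for \eqref{sobolev}, but your argument for both parts matches the method used.
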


\begin{proof}
The estimate \ref{sobolev} is proven in \cite[Lemma~4.3]{KP18}, therefore we present the proof only for the estimate \eqref{sobolev:grad} (in fact, both proofs are based on similar arguments).

The Sobolev embedding theorem yields
\begin{align}\notag
\forall g\in \mathsf{H}^2(\square):\quad 
\|\nabla g\|_{\mathsf{L}^p(\square)}&\leq \|g\|_{\mathsf{W}^{1,p}(\square)}\leq \wt C_{n,p} \|g\|_{\mathsf{H}^2(\square)}
 \\&=\wt C_{n,p} \left( \|g\|^2_{\L(\square)}
    + \|\nabla g\|^2_{\L(\square)}
    + \suml_{k,l=1}^n\left\|{\partial^2g\over \partial x_k\partial x_l}\right\|^2_{\L(\square)}
  \right)^{1/2}\label{sob-square:grad}
\end{align}
provided \eqref{p:grad} holds.
Making the change of variables 
$\square\ni y= x\eps^{-1}-i\text{ with }x\in\square\ie$ we reduce  \eqref{sob-square:grad} to the following estimate:
\begin{align}\notag
\forall g\in\mathsf{H}^2(\square\ie):\quad  \eps^{(p-n)/p}\|\nabla g\|_{\mathsf{L}^p(\square\ie)}
  &\leq \wt C_{n,p} \Bigg(\eps^{-n}\|g\|^2_{\L(\square\ie)}   +\eps^{2-n}\|\nabla g\|^2_{\L(\square\ie)}
  \\
  &\label{sob-square:grad+}  +\eps^{4-n}\suml_{k,l=1}^n\left\|{\partial^2g\over \partial x_k\partial x_l}\right\|^2_{\L(\square\ie)}
  \Bigg)^{1/2}.
\end{align}
Finally, let $f\in \mathsf{H}^2(\square\ie)$.
We denote $\ds f\ie\ceq \eps^{-n}\int_{\square\ie}f(x)\d x$. One has
\begin{gather} \label{fg}
 \nabla (f-f\ie)=\nabla f,\quad{\partial^2 (f-f\ie)\over \partial x_k\partial x_l}={\partial^2 f\over \partial x_k\partial x_l},\quad
 \|f-f\ie\|^2_{\L(\square\ie)}\leq \pi^{-2}\eps^2\|\nabla f\|_{\L(\square\ie)}^2 
\end{gather} 
(the last property follows from Lemma~\ref{lemma:Poincare}). 
 Using \eqref{sob-square:grad+} with $g\ceq f-f\ie$ and taking into account \eqref{fg} and that $\eps<1$, we arrive at the required  estimate 
\eqref{sobolev:grad}.
\end{proof}

In the next section (see the estimate \eqref{pmax+}) we apply the inequality \eqref{sobolev} for the \emph{largest} $p$ satisfying \eqref{p}.
For $n=4$ we are not able to choose a largest $p$ (since in the dimension $4$ the embedding $\H^2\hookrightarrow\mathsf{L}^p$ holds for any $p<\infty$, but not for $p=\infty$), and in this case we need more information on the constant $C_{4,p}$ in the right-hand-side of \eqref{sobolev} -- see the lemma below.

\begin{lemma}\label{lemma:c4p}
For any $p\in [1,\infty)$ 
one has the estimate 
\begin{gather}\label{lemma:c4p:est}
C_{4,p}\leq C  p,
\end{gather}
where the constant $C>0$ is independent of $p$. 
\end{lemma}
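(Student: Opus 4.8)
The plan is to track the dependence of the Sobolev embedding constant $C_{4,p}$ on $p$ by combining the standard Sobolev inequality in dimension $4$ with a sharp interpolation/extrapolation argument, exploiting the fact that $\mathsf{H}^2(\square)$ in four dimensions embeds into $\mathsf{L}^p(\square)$ for \emph{every} finite $p$ but fails only at $p=\infty$. The borderline behaviour $\mathsf{H}^2 \hookrightarrow \mathsf{L}^p$ as $n=2m$ with $m=2$ is exactly the Trudinger–Moser regime, and it is a classical fact that in such a limiting case the embedding constant grows \emph{linearly} in $p$ (equivalently, the functions lie in an exponential Orlicz class). So the estimate $C_{4,p} \le Cp$ is what one expects, and the work is in producing it by elementary means without invoking the full Moser–Trudinger machinery.

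First I would reduce to a statement about a single derivative: since $\|g\|_{\mathsf{H}^2(\square)}$ controls $\|\nabla g\|_{\mathsf{H}^1(\square)}$ and $\mathsf{H}^1(\square) \subset \mathsf{W}^{1,4}(\square) \hookrightarrow \mathsf{L}^q(\square)$ for all finite $q$ in dimension $4$ (critical Sobolev exponent of $\mathsf{W}^{1,4}$ being $\infty$), it suffices to bound the norm of the embedding $\mathsf{W}^{1,4}(\square) \hookrightarrow \mathsf{L}^p(\square)$ by $Cp$. For this the cleanest route is the Gagliardo–Nirenberg–Sobolev inequality on $\mathbb{R}^4$: extending $g$ to $\mathbb{R}^4$ with a bounded extension operator, one has for each $1\le s<4$ the inequality $\|g\|_{\mathsf{L}^{4s/(4-s)}} \le C(s)\,\|g\|_{\mathsf{W}^{1,s}}$, with the known blow-up $C(s) \sim (4-s)^{-1}$ as $s\uparrow 4$. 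Setting $s$ so that $4s/(4-s) = p$ gives $4-s = 16/(p+4)$, hence $C(s) \le C\,(p+4) \le Cp$; then Hölder on the bounded cube $\square$ passes from $\mathsf{W}^{1,s}$ back to $\mathsf{W}^{1,4}$ at the cost of a constant depending only on $|\square|$ and $s\in[1,4]$, which is bounded. Assembling, $C_{4,p} \le Cp$ with $C$ independent of $p$.

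An alternative I would keep in reserve, in case one wants to avoid the precise asymptotics of the Sobolev constant $C(s)$, is to argue directly via power series: for $g$ with $\|g\|_{\mathsf{H}^2(\square)} \le 1$ one shows $\int_\square e^{\alpha |g|^2}\,dx \le C$ for some fixed $\alpha>0$ (Trudinger's inequality in the borderline case $n=2m$), and then $\|g\|_{\mathsf{L}^p(\square)}^p = \int_\square |g|^p \le (p/(2e\alpha))^{p/2}\int_\square e^{\alpha|g|^2} \le C\,(p/(2e\alpha))^{p/2}$, whence $\|g\|_{\mathsf{L}^p(\square)} \le C^{1/p}\sqrt{p/(2e\alpha)} \le C'\sqrt p$ — which is even stronger than claimed ($C_{4,p}\le C\sqrt p$ would suffice, but $\le Cp$ is all that is needed downstream). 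Whichever route is taken, the homogeneity check against the statement of Lemma~\ref{lemma:sobolev} for $n=4$ is routine: one substitutes back and records that the $\eps$-power $\eps^{n/p+(2-n)/2}=\eps^{4/p-1}$ is unaffected, only the $p$-dependent prefactor changes.

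The main obstacle is obtaining the linear (or square-root) growth of the constant with an explicit, self-contained argument: the naive iteration of the subcritical Sobolev inequality $\mathsf{H}^2 \hookrightarrow \mathsf{L}^p$ for fixed $p$ gives no control on how the constant degenerates as $p\to\infty$, and one genuinely needs either the sharp form of the Gagliardo–Nirenberg constant near the critical exponent or a Trudinger-type exponential integrability bound. I expect the Gagliardo–Nirenberg route to be the shortest to write cleanly, since the dependence $C(s)\sim(4-s)^{-1}$ of the Sobolev constant is standard and can be cited; the only care needed is that the extension operator $\mathsf{W}^{1,s}(\square)\to \mathsf{W}^{1,s}(\mathbb{R}^4)$ has norm bounded uniformly for $s\in[1,4]$, which follows from the standard reflection/partition-of-unity construction since $\square$ is a fixed Lipschitz (indeed smooth-corner) domain.
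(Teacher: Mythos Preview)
Your approach is correct but genuinely different from the paper's. Both arguments share the same reduction: extend from $\square$ to $\R^4$ via a Calder\'on-type operator, then use $\H^2(\R^4)\hookrightarrow \mathsf{W}^{1,4}(\R^4)$ with a fixed constant, so that everything boils down to bounding the norm $C_p'$ of the embedding $\mathsf{W}^{1,4}(\R^4)\hookrightarrow\mathsf{L}^p(\R^4)$. From there the paths diverge. The paper proves $C_p'\le Cp$ by a self-contained Moser-type iteration: it derives the recursive inequality $\int|f|^p\le Ap^{4/3}\int|f|^{p-4/3}$ directly from the fundamental theorem of calculus and H\"older, iterates it down to $p=4$, obtains an explicit product over the iteration steps, and bounds that product via Stirling. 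You instead invoke the subcritical Sobolev inequality $\|g\|_{\mathsf{L}^{4s/(4-s)}}\le C(s)\|\nabla g\|_{\mathsf{L}^s}$ with the (citable, non-sharp) blow-up $C(s)\le C(4-s)^{-1}$ and substitute $s=4p/(p+4)$; your Trudinger/Adams alternative even yields the sharper $C_{4,p}\le C\sqrt p$. Your route is shorter and more conceptual but not self-contained; the paper's is longer but elementary and requires no external black box.

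Two small points to clean up. First, the sentence ``$\mathsf{H}^1(\square)\subset\mathsf{W}^{1,4}(\square)$'' is literally false; what you mean (and what the paper uses) is $\H^2\hookrightarrow\mathsf{W}^{1,4}$ via $\nabla g\in\H^1\hookrightarrow\mathsf{L}^4$ in dimension $4$. Second, the detour through a uniform-in-$s$ extension $\mathsf{W}^{1,s}(\square)\to\mathsf{W}^{1,s}(\R^4)$ is unnecessary: extend once in $\H^2$ with compact support in a fixed set $K\supset\square$, and then H\"older on $K$ gives $\|\nabla Eg\|_{\mathsf{L}^s}\le |K|^{1/s-1/4}\|\nabla Eg\|_{\mathsf{L}^4}$ with a bounded $s$-dependent factor.
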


\begin{proof}
By the Sobolev embedding theorem, the space    $\mathsf{W}^{1,4}(\R^4)$  is embedded 
continuously into   $\mathsf{L}^p(\R^4)$ provided  $p\in [4,\infty)$; we denote by $C'_p$ the norm of this embedding. Also,  the space $\H^2(\R^4)$ is embedded into $\mathsf{W}^{1,4}(\R^4)$;  the norm of this embedding is denoted by $C''$. Furthermore, by the Calderon extension theorem \cite[Theorem~4.32]{Ad75}, there exists a  linear {bounded} operator 
$E:\H^2(\square)\to \H^2(\R^4)$ such that $(Ef)(x)=f(x)$ a.e. in $\square=(-1/2,1/2)^4$.
Consequently, for any 
 $f\in\mathsf{H}^2(\square)$ 
and $p\ge 4$ one has the following chain of inequalities:
\begin{gather}\label{chain}
\|f\|_{\mathsf{L}^p(\square)}\leq \|Ef\|_{\mathsf{L}^p(\R^4)}\leq 
C'_p\|Ef\|_{\mathsf{W}^{1,4}(\R^4)}\le
C'_p C''\|Ef\|_{\mathsf{H}^2(\R^4)}\leq 
C'_p C''\|E\|\cdot \|f\|_{\mathsf{H}^2(\square)}.
\end{gather}
We will demonstrate below that
\begin{gather}\label{CC}
C'_p\leq  \wt C'_p\ceq   4^{{1\over 4}-{4\nu\over 3p}-{1\over p}}
 \prod_{j=0}^{\nu-1}\left({3p\over 4}-j\right)^{4\over 3p},
\end{gather}
where $\nu$ is the largest integer $\le {3p\over 4}-3$. 
It is easy to see, using Stirling's formula, that
\begin{gather}\label{CC+}
\wt C'_p\leq   \wt C'  p
\end{gather}
with a constant
$\wt C'>0$ being independent of $p$. 
It 
follows   
from \eqref{chain}--\eqref{CC+}
that 
the estimate \eqref{lemma:c4p:est} 
holds with $C\ceq \wt C'C''\|E\|$ provided $p\in [4,\infty)$. Moreover, the H\"older inequality yields
$\|f\|_{\mathsf{L}^{p_1}(\square)}\leq \|f\|_{\mathsf{L}^{p_2}(\square)}$ as
$1\le p_1\leq p_2$, hence \eqref{lemma:c4p:est} is also valid for $p\in [1,4)$.

It remains to prove \eqref{CC}. Let $f\in C_0^\infty(\R^4)$. One has for all $x=(x_1,x_2,x_3,x_4)\in\R^4$:
\begin{align}
\label{Cp:1}
|f(x)|^{3p\over 4}
&\leq {3p\over 4}\int_{-\infty}^{x_1}|f(t,x_2,x_3,x_4)|^{{3p\over 4}-1}\left|{\partial f\over\partial x_1}(t,x_2,x_3,x_4)\right|\d t,
\\
\label{Cp:2}
|f(x)|^{3p\over 4}&\leq {3p\over 4}\int_{x_1}^{\infty}|f(t,x_2,x_3,x_4)|^{{3p\over 4}-1}\left|{\partial f\over\partial x_1}(t,x_2,x_3,x_4)\right|\d t.
\end{align}
Adding the inequalities \eqref{Cp:1} and \eqref{Cp:2} and dividing by $2$, we obtain
\begin{gather}\label{Cp:3}
|f(x)|^{3p\over 4}\leq {3p\over 8}\int_{-\infty}^{\infty}|f(t,x_2,x_3,x_4)|^{{3p\over 4}-1}\left|{\partial f\over\partial x_1}(t,x_2,x_3,x_4)\right|\d t=:F_1(x_2,x_3,x_4).
\end{gather}
In the same way, replacing the variable $x_1$ by any other of the remaining variables, we get
\begin{gather}\label{Cp:4}
|f(x)|^{3p\over 4}\leq F_2(x_1,x_3,x_4),\quad  
|f(x)|^{3p\over 4}\leq F_3(x_1,x_2,x_4),\quad  
|f(x)|^{3p\over 4}\leq F_4(x_1,x_2,x_3),
\end{gather}
with obvious definitions of  $F_2,\,F_3,\,F_4$. 
Multiplying the inequalities in \eqref{Cp:3}-\eqref{Cp:4} and then taking the ${1\over 3}$-power,
one has
\begin{gather}\label{Cp:5}
|f(x)|^{p}\leq \left(F_1(x_2,x_3,x_4)F_2(x_1,x_3,x_4) F_3(x_1,x_2,x_4) F_4(x_1,x_2,x_3)\right)^{1\over 3}.
\end{gather}
Integrating \eqref{Cp:5} with respect to $x_1$ and then applying the   H\"older
inequality,
we obtain
\begin{gather*}
\int_{\R}|f|^{p}\d x_1=
(F_1(x_2,x_3,x_4))^{1\over 3}
\int_{\R}\left(F_2(x_1,x_3,x_4) F_3(x_1,x_2,x_4) F_4(x_1,x_2,x_3)\right)^{1\over 3}\d x_1\\
\leq (F_1(x_2,x_3,x_4))^{1\over 3}
\left(\int_{\R}F_2(x_1,x_3,x_4)\d x_1\right)^{1\over 3}
\left(\int_{\R}F_3(x_1,x_2,x_4)\d x_1\right)^{1\over 3}
\left(\int_{\R}F_4(x_1,x_2,x_3)\d x_1\right)^{1\over 3}.
\end{gather*}
Successively repeating the last steps (i.e., the integration with respect to one of the directions
and the subsequent application of the H\"older inequality with respect to this direction), now for
  the variables $x_2$, $x_3$ and $x_4$, 
we arrive at  
\begin{gather*}
\label{Cp:5+}
\int_{\R^4}|f|^{p}\d x\leq 
\left(
\int_{\R^3}F_1 \d x_2\d x_3\d x_4\cdot 
\int_{\R^3}F_2 \d x_1\d x_3\d x_4\cdot  
\int_{\R^3}F_3 \d x_1\d x_2\d x_4\cdot  
\int_{\R^3}F_4 \d x_1\d x_2\d x_3
\right)^{1\over 3},
\end{gather*}
whence, using the definitions of  the functions $F_k$ and again the H\"older inequality, we derive
\begin{align}\notag
\int_{\R^4}|f |^p\d x
&\leq 
\left({3p\over 8}\right)^{4\over 3}\prod_{k=1}^4
\left(
\int_{\R^4}|f|^{{3p\over 4}-1}\left|{\partial f \over \partial x_k}\right|\d x 
\right)^{1\over 3}\\\notag
&\leq
\left({3p\over 8}\right)^{4\over 3}\prod_{k=1}^4
\left(
\int_{\R^4} |f|^{p-{4\over 3}}\d x 
\right)^{1\over 4}
\left(
\int_{\R^4} \left|{\partial f \over \partial x_k}\right|^4\d x 
\right)^{1\over 12}
\\
&\leq
\left({3p\over 8}\right)^{4\over 3} 
\left(
\int_{\R^4} |f|^{p-{4\over 3}}\d x 
 \right)
\left({1\over 4}\suml_{k=1}^4
\int_{\R^4} \left|{\partial f \over \partial x_k}\right|^4\d x 
\right)^{1\over 3} 
\label{Cp:6}
\le A p^{4\over 3}  \int_{\R^4} |f|^{p-{4\over 3}}\d x,
\end{align}
where
$A\ceq \left({3\over 8}\right)^{4\over 3} \left({1\over 4}\right)^{1\over 3}
\|f\|^{4\over 3}_{\mathsf{W}^{1,4}(\R^4)}$.
Iterating this inequality $\nu-1$ times  we get:
\begin{gather} \label{Cp:7}
\int_{\R^4}|f|^p\d x\le 
A^\nu
\prod_{j=0}^{\nu-1}\left(p-{4j\over 3}\right)^{4\over 3}\int_{\R^4}|f|^{p-{4\nu\over 3}}\d x
\end{gather}
(recall that $\nu$ is the largest integer $\le {3p\over 4}-3$).
Let $q\ceq p-{4\nu\over 3}$. By the choice of $\nu$ we have $4\le q<{16\over 3}$.
Using the H\"older inequality and then applying \eqref{Cp:6} with $p={16\over 3}$, we obtain
\begin{align}\notag
\int_{\R^4}|f|^q\d x=\int_{\R^4}|f|^{16-3q}|f|^{4q-16}\d x
&\leq   \left(\int_{\R^4}|f|^{4}\d x\right)^{4-{3q\over 4}}\left(\int_{\R^4} |f|^{16\over 3}\d x\right)^{{3q\over 4}-3}\\\label{Cp:8}
&\leq A^{{3q\over 4}-3}\left({16\over 3}\right)^{q-4}\int_{\R^4}|f|^4\d x.
\end{align}
Inserting \eqref{Cp:8} into \eqref{Cp:7} and taking into account   
${3q\over 4}={3p\over 4}-\nu$, we get
\begin{align*}  
\int_{\R^4}|f|^p\d x&
\le 
A^{{3p\over 4}-3}\left({16\over 3}\right)^{p-4-{4\nu\over 3}}
\prod_{j=0}^{\nu-1}\left(p-{4j\over 3}\right)^{4\over 3}
 {\int_{\R^4}|f|^4\d x} 
\le (\wt C'_p)^p\|f\|^p_{\mathsf{W}^{1,4}(\R^4)},
\end{align*}
where $\wt C'_p$ is defined in \eqref{CC}. 
By density arguments this estimate holds for all $f\in \mathsf{W}^{1,4}(\R^4)$, which
implies the desired inequality \eqref{CC}. The lemma is proven.
\end{proof}

The following lemma is a simple consequence of the fact that the set
$\Omega\setminus\cupl_{i\in\I\e}\square\ie$ belongs to the $\sqrt{n}\eps$--neighborhood of $\partial\Omega$
(this follows easily from the definition of $\I\e$; note that $\sqrt{n}$ is the length of the diagonal of the cube $\square$).

\begin{lemma}[{\cite[Lemma~4.7]{KP18}}]
\label{lemma:layer}
One has
\begin{gather*}
\forall g\in \H^1_0(\Omega):\quad \|g\|_{\L(\Omega\setminus\cupl_{i\in\I\e}\square\ie)}
\leq C\eps\|\nabla g\|_{\L(\Omega)},
\end{gather*}
where the constant $C>0$ depends only on $\Omega$.
\end{lemma}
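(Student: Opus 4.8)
The plan is to derive the estimate from the hint in the statement: the set $S\e\ceq\Omega\setminus\cupl_{i\in\I\e}\square\ie$ is contained in a boundary layer of $\partial\Omega$ of width $\sqrt n\,\eps$, and on such a thin collar the $\L$-norm of an $\H^1_0(\Omega)$-function is controlled by $\eps$ times the $\L$-norm of its gradient. Since the lemma serves only as an auxiliary tool (and coincides with \cite[Lemma~4.7]{KP18}), I would present the argument as a short sketch in two steps.

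\emph{Step 1: the geometric inclusion.} First I would show $S\e\subset\{x\in\Omega:\ \dist(x,\partial\Omega)\le\sqrt n\,\eps\}$. The open cubes $\square\ie$, $i\in\Z^n$, tile $\R^n$ up to a set of Lebesgue measure zero (a countable union of grid hyperplanes), and $\mathrm{diam}(\square\ie)=\sqrt n\,\eps$ since the diagonal of $\square$ has length $\sqrt n$. Hence almost every $x\in S\e$ belongs to a unique $\square\ie$, and this $i$ must lie outside $\I\e$; by the definition of $\I\e$ the cube $\square\ie$ is then not contained in $\Omega$, so it contains a point $y\notin\Omega$. The segment $[x,y]$ joins $x\in\Omega$ to $y$ in the complement of the open set $\Omega$, hence it meets $\partial\Omega$ at some point $z$ with $|x-z|\le|x-y|\le\mathrm{diam}(\square\ie)=\sqrt n\,\eps$. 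Discarding the null set, the inclusion follows.

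\emph{Step 2: the collar estimate.} Next I would prove that for small $\delta$ and every $g\in\H^1_0(\Omega)$ one has $\|g\|_{\L(\Sigma_\delta)}\le C\delta\,\|\nabla g\|_{\L(\Omega)}$ with $\Sigma_\delta\ceq\{x\in\Omega:\ \dist(x,\partial\Omega)<\delta\}$ and $C$ depending only on $\Omega$; applying it with $\delta=\sqrt n\,\eps$, which is legitimate once $\eps_0$ is taken small enough that $\sqrt n\,\eps_0$ is below the collar width, completes the proof in view of Step~1. To establish the collar estimate, extend $g$ by zero to a function in $\H^1(\R^n)$ and invoke the uniform $\mathsf{C}^2$-regularity of $\partial\Omega$ in the sense of \cite{Br61}: this furnishes a locally finite atlas of boundary charts with uniformly bounded geometry in which $\partial\Omega$ is a Lipschitz graph $\{x_n=\phi(x')\}$, with $\Omega$ lying on the side $\{x_n>\phi(x')\}$, and whose $\delta_0$-collars cover $\Sigma_{\delta_0}$ for some $\delta_0>0$. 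Since $g$ vanishes on each graph, the fundamental theorem of calculus in the $x_n$-direction plus the Cauchy--Schwarz inequality give $|g(x',x_n)|^2\le(x_n-\phi(x'))\int_{\phi(x')}^{\phi(x')+c\delta}|\nabla g(x',t)|^2\d t$ for $x$ in the layer (with $c$ depending only on the uniform Lipschitz bound); integrating in $x_n$ over the layer of width $\le c\delta$, then in $x'$, and summing over the charts, whose overlap multiplicity is uniformly bounded, yields the claimed bound.

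\emph{Main obstacle.} The only delicate point is the uniformity of the collar estimate when $\Omega$ is unbounded, where one cannot extract a finite atlas with uniform constants by compactness of $\partial\Omega$. This is precisely what the uniform-regularity hypothesis on $\partial\Omega$ supplies, and for bounded $\Omega$ the whole argument is entirely standard.
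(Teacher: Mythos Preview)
Your proof is correct and follows exactly the approach the paper indicates: the text preceding the lemma states that the estimate is ``a simple consequence of the fact that the set $\Omega\setminus\cupl_{i\in\I\e}\square\ie$ belongs to the $\sqrt{n}\eps$--neighborhood of $\partial\Omega$'' and then cites \cite[Lemma~4.7]{KP18} rather than giving a proof, so your two-step argument (geometric inclusion plus collar estimate via the fundamental theorem of calculus in boundary charts) is precisely the intended fill-in. Your remark that the uniform-regularity hypothesis is what makes Step~2 go through for unbounded $\Omega$ is also on point and consistent with the paper's standing assumptions.
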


{
The  lemma below is used in the proof of Theorem~\ref{th2a}.

\begin{lemma} \label{lemma:tildeg}
One has 
\begin{align}\label{lemma:tildeg:est1}
\forall g\in \H^1(\square\ie):\quad
&\left|g\ie - \wt g\ie\right|^2\leq C\eps^{-n}(\eps^{2}+Q\e^{-1})\|\nabla g\|^2_{\L(\square\ie)}, \\
\label{lemma:tildeg:est2}
\forall g\in \H^1(D\ie):\quad
&\left\|g - \wt g\ie\right\|^2_{\L(\partial D\ie)}\leq 
C d\e\|\nabla g\|^2_{\L(D\ie)}, 
\end{align}
where $g\ie$ and $\wt g\ie$ are the mean values of $g$ over
$\square\ie$ and $\partial D\ie$, respectively, i.e.  
$$
g\ie\ceq \frac1{\eps^{n}}\int_{\square\ie}g\d x,\quad 
\wt g\ie:=\frac{1}{\varkappa_n d\e ^{n-1}}\int_{\partial D\ie}g\d s.
$$
The constant $C>0$ depends only on $n$. 
\end{lemma}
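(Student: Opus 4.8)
The plan is to prove the two estimates separately, each by a standard scaling argument combined with a trace/Poincar\'e-type inequality on a fixed reference configuration, and then rescale to $\square\ie\cong\eps\square$ (respectively $D\ie\cong d\e D$).

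For \eqref{lemma:tildeg:est2}: on the unit ball $D=B_1$ the map $g\mapsto g-\frac1{|\partial D|}\int_{\partial D}g\d s$ annihilates constants, so the trace inequality together with the Poincar\'e inequality on $D$ gives $\|g-\wt g\|^2_{\L(\partial D)}\le C\|\nabla g\|^2_{\L(D)}$ for all $g\in\H^1(D)$, with $C$ depending only on $n$; here I use that the mean of $g-\wt g$ over $\partial D$ is zero, so only the gradient enters on the right. Rescaling $x=d\e y$ with $y\in D=B_1$, $x\in D\ie=B_{d\e}(x\ie)$, the surface measure picks up a factor $d\e^{n-1}$ and $\|\nabla g\|^2_{\L(D\ie)}$ picks up $d\e^{n-2}$, so the ratio scales like $d\e^{(n-1)-(n-2)}=d\e$, which yields \eqref{lemma:tildeg:est2}. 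This step is routine.

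For \eqref{lemma:tildeg:est1}: the key point is to compare two averages of the same $\H^1$-function over the cube and over the small sphere inside it. The natural route is to write $g\ie-\wt g\ie=(g\ie-g\,'\ie)+(g\,'\ie-\wt g\ie)$ where $g\,'\ie$ is, say, the mean of $g$ over $\partial D\ie$ computed on the reference scale, or simply to estimate directly. On the reference cube $\square$ with a ball $B_{\Lambda\e}$ of radius $\Lambda\e=d\e/\eps\le 1/4$ inside it, one wants
$$
\left|\,\overline{g}_{\square}-\overline{g}_{\partial B_{\Lambda\e}}\,\right|^2\le C\bigl(1+\Lambda\e^{\,2-n}\bigr)\,\|\nabla g\|^2_{\L(\square)}
$$
(for $n\ge3$; a logarithmic version for $n=2$), after which rescaling $x=\eps y$ introduces the factor $\eps^{-n}$ on the gradient term, and $\Lambda\e^{\,2-n}=d\e^{2-n}\eps^{n-2}$ turns, via the definition \eqref{peqe} of $Q\e$, into a constant multiple of $\eps^{-2}Q\e^{-1}\cdot\eps^{2}=Q\e^{-1}$ relative to the $\eps^{-n}$ prefactor — matching the claimed $\eps^{-n}(\eps^2+Q\e^{-1})$. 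To get the displayed reference inequality I would: (i) bound $|\overline{g}_{\square}-\overline{g}_{B_{1/2}}|$ by $C\|\nabla g\|_{\L(\square)}$ using Lemma~\ref{lemma:MK06} or a direct Poincar\'e argument on $\square$; (ii) bound $|\overline{g}_{B_{1/2}}-\overline{g}_{\partial B_{\Lambda\e}}|$ using the representation of the difference of spherical means as a radial integral of $\partial_r g$ weighted by the surface areas, i.e. via the fundamental-solution/Newtonian-potential structure — this is exactly where the capacity-type factor $\Lambda\e^{2-n}$ (or $|\ln\Lambda\e|$ when $n=2$) enters, and it is the reason $Q\e^{-1}$ appears.

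The main obstacle is step (ii): controlling the discrepancy between the bulk average and the average over the tiny inner sphere. One must be careful that the estimate is \emph{uniform in $i\in\I\e$} (it is, since all cells are translates of $\eps\square$) and that the dependence on the small radius $\Lambda\e$ is sharp — a crude trace inequality on $\partial B_{\Lambda\e}$ would lose powers of $\Lambda\e$ and spoil the match with $Q\e^{-1}$. The cleanest way is to use the radial monotone rearrangement / co-area identity: for $0<\rho<1/2$, $\overline{g}_{\partial B_\rho}-\overline{g}_{\partial B_{1/2}}=\int_\rho^{1/2}\partial_r\overline{g}_{\partial B_r}\,\d r$ and $|\partial_r\overline{g}_{\partial B_r}|\le (\varkappa_n r^{n-1})^{-1/2}\|\nabla g\|_{\L(\partial B_r)}$ by Cauchy--Schwarz, then integrate again in $r$ and apply Cauchy--Schwarz in $r$ with weight $r^{-(n-1)}$, whose integral over $(\Lambda\e,1/2)$ produces precisely $C\Lambda\e^{2-n}$ for $n\ge3$ (and $C|\ln\Lambda\e|$ for $n=2$, which under \eqref{lnln} is $\le C\eps^{-?}Q\e^{-1}$-compatible). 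Combining with (i) and rescaling completes the proof; I would also record the $n=2$ case explicitly since the weight integral changes character there.
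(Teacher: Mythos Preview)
Your proposal is correct and closely matches the paper's treatment. For \eqref{lemma:tildeg:est2} you use the same trace-plus-Poincar\'e argument on the unit ball followed by rescaling; the paper takes the slight variant of inserting the \emph{volume} mean over $D\ie$ (so that the standard Poincar\'e with vanishing $D\ie$-mean applies) and then observing $\|g-\wt g\ie\|_{\L(\partial D\ie)}\le 2\|g-c\|_{\L(\partial D\ie)}$ for any constant $c$, but this is equivalent to your boundary-mean Poincar\'e. For \eqref{lemma:tildeg:est1} the paper simply cites \cite[Lemma~2.1]{Kh13}; your self-contained radial-integration argument via spherical means is exactly the right mechanism (and is the same computation that drives Lemma~\ref{lemma:crucial} in the paper), producing the factor $\int_{\Lambda\e}^{1/2}r^{1-n}\d r\sim\Lambda\e^{2-n}$ (resp.\ $|\ln\Lambda\e|$ for $n=2$) which rescales to $Q\e^{-1}$. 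The only small gap in your write-up is the link between $\overline{g}_{B_{1/2}}$ in step~(i) and $\overline{g}_{\partial B_{1/2}}$ in step~(ii), but this is a fixed-domain difference of two means bounded by $C\|\nabla g\|_{\L(B_{1/2})}$ and is harmless.
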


\begin{proof}
The estimate \eqref{lemma:tildeg:est1} is proven in \cite[Lemma~2.1]{Kh13}.
Let us prove \eqref{lemma:tildeg:est2}.
We denote by $D$ the unit ball in $\R^n$. One has the following  trace and Poincar\'e inequalities on $D$:
\begin{align}\label{D:trace:0}
\forall u\in \H^1(D ):&\quad\|u\|^2_{\L(\partial D )}\leq C \| u\|^2_{\H^1(  D )}  ,\\\label{D:Poincare:0}
\forall u\in \H^1(D )\text{ with }\int_{D }u(x)\d x=0:&\quad
\|u\|^2_{\L(  D )}\leq C  \|\nabla u\|^2_{\L(  D )}.
\end{align}
Making  the change of variables $D\ni y= {x -i\eps\over d\e}$ with $x\in D\ie$ we reduce \eqref{D:trace:0}--\eqref{D:Poincare:0}
to
\begin{align}\label{D:trace}
\forall u\in \H^1(D\ie):&\  \|u\|^2_{\L(\partial D\ie)}\leq C \left(d\e^{-1}\|u\|^2_{\L(  D\ie)}  + d\e \|\nabla u\|^2_{\L(  D\ie)}   \right),\\ \label{D:Poincare}
\forall u\in \H^1(D\ie),\ \int_{D\ie}u(x)\d x=0:&\ 
\|u\|^2_{\L(  D\ie)}\leq C  d\e^{2}\|\nabla u\|^2_{\L(  D\ie)}.
\end{align}
One has: 
\begin{multline*}
\|g- \wt g\ie \|_{\L(\partial D\ie)}
 \leq 
\|g- g\ie\|_{\L(\partial D\ie)} + 
|g\ie-\wt g\ie|\cdot |\partial D\ie|^{1/2}\\ =
\|g- g\ie\|_{\L(\partial D\ie)}
 + 
\left|\int_{\partial D\ie} (g\ie-g)\d s\right|\cdot |\partial D\ie|^{-1/2}
\leq 
2\|g- g\ie \|_{\L(\partial D\ie)}, 
\end{multline*}
whence, using \eqref{D:trace}--\eqref{D:Poincare} with $u\ceq g-  g\ie$, 
we deduce  
\begin{gather*}
\|g- \wt g\ie \|_{\L(\partial D\ie)} \leq 
C\left(d\e^{-1}\|g-  g\ie\|^2_{\L(  D\ie)}  + d\e \|\nabla g\|^2_{\L(  D\ie)}   \right)\leq
C  d\e \|\nabla g\|^2_{\L(  D\ie)}.
\end{gather*}
The lemma is proven.
\end{proof}
}

The next estimate plays a crucial role for the proof of Theorems~\ref{th5}.
We denote
\begin{gather}\label{Yie}
Y\ie\ceq \left\{x\in\R^n:\ \ds d\e<|x-x\ie|<{\eps\over 2}\right\}.
\end{gather}

\begin{lemma}
\label{lemma:crucial}
One has
\begin{gather}\label{lemma:crucial:est}
\forall u\in \H^1(Y\ie):\quad 
\|u\|^2_{\L(Y\ie)}\leq 
C\left( P\e^{-1}\gamma\e\|u\|^2_{\L(\partial D\ie)}+
  Q\e^{-1}\|\nabla u\|^2_{\L(Y\ie)}\right),
\end{gather}
where the constant $C>0$ depends only on $n$.
\end{lemma}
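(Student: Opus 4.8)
The plan is to reduce the inequality on the annulus $Y\ie$ to a one-dimensional (radial) estimate, by first averaging over spheres. Writing $x-x\ie$ in polar coordinates $(r,\omega)$ with $r\in(d\e,\eps/2)$ and $\omega\in S^{n-1}$, I would introduce, for each fixed $\omega$, the radial function $r\mapsto u(x\ie+r\omega)$. The key observation is that controlling $\|u\|_{\L(Y\ie)}^2=\int_{S^{n-1}}\int_{d\e}^{\eps/2}|u|^2 r^{n-1}\d r\d\omega$ can be split into a contribution from the boundary value at $r=d\e$ and a contribution from the radial derivative. More precisely, for fixed $\omega$ one has the elementary identity $u(r,\omega)=u(d\e,\omega)+\int_{d\e}^r\partial_\rho u(\rho,\omega)\d\rho$, so by Cauchy--Schwarz
\begin{gather*}
|u(r,\omega)|^2\leq 2|u(d\e,\omega)|^2+2\left(\int_{d\e}^{r}|\partial_\rho u(\rho,\omega)|\d\rho\right)^2.
\end{gather*}

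The next step is to integrate this against the weight $r^{n-1}\d r$ over $(d\e,\eps/2)$ and then over $S^{n-1}$, and to track the weights carefully so that the two resulting prefactors match $P\e^{-1}\gamma\e$ and $Q\e^{-1}$ up to dimensional constants. For the first term, $\int_{d\e}^{\eps/2}r^{n-1}\d r\asymp \eps^n$, and $\int_{S^{n-1}}|u(d\e,\omega)|^2\d\omega=d\e^{1-n}\|u\|^2_{\L(\partial D\ie)}$; hence the boundary contribution is $\leq C\,\eps^n d\e^{1-n}\|u\|^2_{\L(\partial D\ie)}=C\,\varkappa_n^{-1}P\e^{-1}\gamma\e\|u\|^2_{\L(\partial D\ie)}$ by the very definition $P\e=\varkappa_n\gamma\e d\e^{n-1}\eps^{-n}$ in \eqref{peqe} — this is exactly where the form of $P\e$ is used, and it works uniformly in $n\ge2$. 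For the gradient term one must bound, for fixed $\omega$, the quantity $\int_{d\e}^{\eps/2}r^{n-1}\left(\int_{d\e}^{r}|\partial_\rho u(\rho,\omega)|\d\rho\right)^2\d r$ by $C\,Q\e^{-1}\int_{d\e}^{\eps/2}|\partial_\rho u(\rho,\omega)|^2\rho^{n-1}\d\rho$; integrating over $S^{n-1}$ then gives the $Q\e^{-1}\|\nabla u\|^2_{\L(Y\ie)}$ term since $|\partial_r u|\le|\nabla u|$.

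The main obstacle is this last radial estimate, which is a weighted Hardy-type inequality whose constant degenerates as $n\to2$ and must therefore be handled with the correct weight bookkeeping in each dimension. By Cauchy--Schwarz with the splitting $|\partial_\rho u|=|\partial_\rho u|\rho^{(n-1)/2}\cdot\rho^{-(n-1)/2}$ one gets $\left(\int_{d\e}^r|\partial_\rho u|\d\rho\right)^2\le \left(\int_{d\e}^r\rho^{1-n}\d\rho\right)\left(\int_{d\e}^r|\partial_\rho u|^2\rho^{n-1}\d\rho\right)$; the first factor is $\le \int_{d\e}^{\eps/2}\rho^{1-n}\d\rho$, which for $n\ge3$ equals $\tfrac{1}{n-2}(d\e^{2-n}-(\eps/2)^{2-n})\asymp d\e^{2-n}$, and for $n=2$ equals $\ln(\eps/(2d\e))\asymp|\ln d\e|$ using \eqref{de} and \eqref{lnln}. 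Multiplying by $\int_{d\e}^{\eps/2}r^{n-1}\d r\asymp\eps^n$ yields a prefactor $\asymp \eps^n d\e^{2-n}=(n-2)\varkappa_n^{-1}Q\e^{-1}$ for $n\ge3$, respectively $\asymp\eps^2|\ln d\e|=2\pi^{-1}Q\e^{-1}$ for $n=2$ — precisely the two cases in the definition of $Q\e$. I would organize the dimension split ($n\ge3$ versus $n=2$) as the only case distinction, after which collecting the two terms gives \eqref{lemma:crucial:est} with a constant depending only on $n$.
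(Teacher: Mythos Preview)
Your proof is correct and follows essentially the same route as the paper: both use the radial fundamental-theorem-of-calculus identity from $r=d\e$, the Cauchy--Schwarz splitting with weights $\rho^{(n-1)/2}$ and $\rho^{-(n-1)/2}$, and then identify the resulting prefactors with $P\e^{-1}\gamma\e$ and $Q\e^{-1}$ via the definitions in \eqref{peqe}. One small remark: for $n=2$ you only need the upper bound $\int_{d\e}^{\eps/2}\rho^{-1}\d\rho\le |\ln d\e|$, which follows simply from $\eps/2<1$; invoking \eqref{lnln} is unnecessary (and that condition is only assumed when $Q>0$), so you should drop that reference to keep the constant depending on $n$ alone as claimed.
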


\begin{proof}
Evidently, it is enough to prove \eqref{lemma:crucial:est}   only for $C^\infty(\overline{Y\ie})$-functions. We introduce spherical coordinates $(r,\phi)$ in $\overline{Y\ie}$, where
$r\in [d\e,\eps/2]$ is the radial coordinate, which stands for the distance to $x\ie$, while
$\phi=(\phi_1,{\dots},\phi_{n-1})$ are the angular coordinates (here $\phi_j\in [0,\pi]$ as $j=1,\dots,n-2$, 
$\phi_{n-1}\in [0,2\pi)$).
Let $u\in \mathsf{C}^\infty(\overline{Y\ie})$. One has:
\begin{gather}\label{FTC}
u(r,\phi)=u(d\e,\phi)+\int_{d\e}^{r}{\partial
u(\tau,\phi)\over\partial\tau}\d\tau,
\end{gather}
whence
\begin{align*}
|u(r,\phi)|^2&\le 2|u(d\e,\phi)|^2+2\left|\int_{d\e}^{r}{\partial
u(\tau,\phi)\over\partial\tau}\d\tau\right|^2\\ &\leq
2|u(d\e,\phi)|^2+
2I_{\eps} \int_{d\e}^{\eps/2}\left|{\partial
u(\tau,\phi)\over\partial\tau}\right|^2\tau^{n-1}\d\tau,\quad 
\text{where }
I_{\eps}\ceq \int_{d\e}^{\eps/2}\tau^{1-n}\d\tau.
\end{align*}
Multiplying this inequality by $r^{n-1}\d r\d\phi$, where $\d\phi=\left(\prod_{j=1}^{n-2}\left(\sin\phi_j\right)^{n-1-j}\right)\d\phi_1\dots\d\phi_{n-1}$,
and  integrating
over $r\in (d\e,\eps/2)$, $\phi_j\in (0,\pi)$, $j=1,\dots,n-2$, 
$\phi_{n-1}\in (0,2\pi)$,
we get
\begin{align}\notag
\|u\|^2_{\L(Y\ie)}&\leq
2\left(\int_{\mathcal{S}}| u(d\e,\phi)|^2\d\phi  +
 I_{\eps} \int_{\mathcal{S}}\int_{d\e}^{\eps/2}\left|{\partial
u(\tau,\phi)\over\partial\tau}\right|^2\tau^{n-1}\d\tau\d\phi\right)\\
&\times
\left(\int_{d\e}^{\eps/2} r^{n-1}\d r\right)
 \label{crucial:1}
\le
{\eps^n\over 2^{n-1}n } \left( 
d\e^{-n+1}\|u\|^2_{\L(\partial D\ie)}+
 I_{\eps}\|\nabla u \|^2_{\L(Y\ie)}\right)
\end{align}
(here $\mathcal{S}\ceq \left\{\phi=(\phi_1,\dots,\phi_{n-1}):\ 
\phi_j\in (0,\pi),\, j=1,\dots,n-2,\  
\phi_{n-1}\in (0,2\pi)\right\}$).
One has 
\begin{align}
& d\e^{-n+1}= \varkappa_n\gamma\e P^{-1}\e\eps^{-n},
\\
n\ge 3:\quad & \ds
I_{\eps}\leq \int_{d\e}^\infty \tau^{1-n}\d\tau  
=(n-2)^{-1}d\e^{2-n}=
\varkappa_n Q\e^{-1}\eps^{-n}
,\\
n=2:\quad &\ds
I_{\eps}\leq \int_{d\e}^1 \tau^{-1}\d\tau  
=  |\ln d\e|=
2\pi Q\e^{-1}\eps^{-2}.\label{PPQQ}
\end{align}
The desired estimate \eqref{lemma:crucial:est} follows immediately 
from \eqref{crucial:1}--\eqref{PPQQ}.
\end{proof}

{
Swapping $r$ and $d\e$ in  \eqref{FTC} and then repeating 
verbatim the proof of \eqref{lemma:crucial:est} we get the following estimate below; it will be
used in the proof of Theorem~\ref{th2a}.

\begin{lemma}
\label{lemma:crucial+}
One has
\begin{gather}
\label{lemma:crucial:est+}
\forall u\in \H^1(Y\ie):\quad 
\gamma\e\|u\|^2_{\L(\partial D\ie)}
\leq 
C\left( 
P\e\|u\|^2_{\L(Y\ie)}
+
P\e Q\e^{-1}\|\nabla u\|^2_{\L(Y\ie)}\right),
\end{gather}
where the constant $C>0$ depends only on $n$.
\end{lemma}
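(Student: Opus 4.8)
The plan is to mimic the proof of Lemma~\ref{lemma:crucial}, but starting from the integral identity in which the roles of $r$ and $d\e$ are interchanged, as the authors already hint. Concretely, for $u\in\mathsf{C}^\infty(\overline{Y\ie})$ one writes, instead of \eqref{FTC},
\begin{gather*}
u(d\e,\phi)=u(r,\phi)-\int_{d\e}^{r}{\partial u(\tau,\phi)\over\partial\tau}\d\tau,
\end{gather*}
so that $|u(d\e,\phi)|^2\le 2|u(r,\phi)|^2+2I_\eps\int_{d\e}^{\eps/2}|\partial_\tau u(\tau,\phi)|^2\tau^{n-1}\d\tau$ with the same $I_\eps\ceq\int_{d\e}^{\eps/2}\tau^{1-n}\d\tau$ as before. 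The key point is that the bound on $|u(d\e,\phi)|^2$ is now uniform in $r$, so one is free to integrate the right-hand side against $r^{n-1}\d r$ over $(d\e,\eps/2)$ before integrating over the angular variables.

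First I would integrate this pointwise inequality over the angular set $\mathcal S$ and over $r\in(d\e,\eps/2)$ against $r^{n-1}\d r\,\d\phi$. The left-hand side becomes $\big(\int_{d\e}^{\eps/2}r^{n-1}\d r\big)\int_{\mathcal S}|u(d\e,\phi)|^2\d\phi = c_n(\eps^n-(2d\e)^n)2^{-n}\,d\e^{1-n}\|u\|^2_{\L(\partial D\ie)}$, which up to the constant equals a multiple of $\eps^n d\e^{1-n}\|u\|^2_{\L(\partial D\ie)}$ (using $d\e\le\eps/4$ from \eqref{de}); the right-hand side becomes a multiple of $\|u\|^2_{\L(Y\ie)}+I_\eps\|\nabla u\|^2_{\L(Y\ie)}$. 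Dividing through by the scalar factor multiplying $\|u\|^2_{\L(\partial D\ie)}$, i.e. by $c_n\eps^n d\e^{1-n}$ up to constants, and then multiplying both sides by $\gamma\e$, I would obtain
\begin{gather*}
\gamma\e\|u\|^2_{\L(\partial D\ie)}\le C\,\gamma\e d\e^{n-1}\eps^{-n}\Big(\|u\|^2_{\L(Y\ie)}+I_\eps\|\nabla u\|^2_{\L(Y\ie)}\Big).
\end{gather*}
Finally I would insert the identities already recorded in the proof of Lemma~\ref{lemma:crucial}: $\gamma\e d\e^{n-1}\eps^{-n}=\varkappa_n^{-1}P\e$ (from the definition \eqref{peqe} of $P\e$), and $I_\eps\le\varkappa_n Q\e^{-1}\eps^{-n}$ for $n\ge3$, respectively $I_\eps\le 2\pi Q\e^{-1}\eps^{-2}$ for $n=2$, which in both cases gives $\gamma\e d\e^{n-1}\eps^{-n}\cdot I_\eps\le C\,P\e Q\e^{-1}$. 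Combining these yields exactly \eqref{lemma:crucial:est+}. The passage from smooth functions to all of $\H^1(Y\ie)$ is by density, as in Lemma~\ref{lemma:crucial}.

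I do not expect a genuine obstacle here; the statement is essentially the ``dual'' rearrangement of Lemma~\ref{lemma:crucial}. The only point requiring a little care is the bookkeeping of the scalar prefactors after interchanging the order of $r$- and $\phi$-integration — one must make sure that the factor $\int_{d\e}^{\eps/2}r^{n-1}\d r$ that now multiplies the boundary term is of order $\eps^n$ (again using $d\e=\deps\,\eps$ with $\deps\le 1/4$), so that after division it produces precisely the weight $\gamma\e d\e^{n-1}\eps^{-n}\sim P\e$ in front of $\|u\|^2_{\L(Y\ie)}$ and the weight $P\e Q\e^{-1}$ in front of $\|\nabla u\|^2_{\L(Y\ie)}$. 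Everything else is identical to the computation \eqref{crucial:1}--\eqref{PPQQ} already carried out.
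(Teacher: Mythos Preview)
Your approach is exactly the one the paper uses (the paper's proof consists of the single sentence ``swap $r$ and $d_\eps$ in \eqref{FTC} and repeat verbatim''). One small bookkeeping slip in precisely the place you flagged as delicate: after integrating, the gradient term on the right is $2I_\eps J_\eps\|\nabla u\|^2_{\L(Y\ie)}$ with $J_\eps=\int_{d_\eps}^{\eps/2}r^{n-1}\,dr\sim\eps^n$, so upon dividing by $J_\eps d_\eps^{1-n}$ the coefficient of the gradient term is $C\gamma_\eps d_\eps^{n-1} I_\eps$, \emph{not} $C\gamma_\eps d_\eps^{n-1}\eps^{-n} I_\eps$ as in your displayed inequality (the claim $\gamma_\eps d_\eps^{n-1}\eps^{-n} I_\eps\le C P_\eps Q_\eps^{-1}$ is in fact false, off by $\eps^{-n}$). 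With the correct coefficient one has $\gamma_\eps d_\eps^{n-1} I_\eps=\varkappa_n^{-1}P_\eps\eps^n\cdot I_\eps\le C P_\eps Q_\eps^{-1}$ directly from \eqref{PPQQ}, and everything else is as you describe.
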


}

\section{Realisation of the abstract scheme}\label{sec:5}

 {
In this section we apply the abstract Theorems~\ref{thA1}, \ref{thA1+} and \ref{thA2} to the 
homogenization of the problem \eqref{BVP:0}. Through this section we always assume that
\begin{gather}\label{PfinQfin}
P<\infty\text{ or }Q<\infty.
\end{gather}}

We denote $$\HS\e\ceq \L(\Omega\e),\quad \HS\ceq \L(\Omega).$$ 
Recall that the forms $\a\e$ and $\a$ in $\HS\e$ and $\HS$ are defined by \eqref{ae} and \eqref{a}, respectively;
$\A\e$ and $\A$ are non-negative, self-adjoint unbounded operators associated with these forms. 
We introduce the spaces $\HS\e^1$, $\HS^1$ as in \eqref{scale} (cf.~\eqref{scale+}):
\begin{gather}
\begin{array}{ll}
\HS^1\e=\left\{u\in \H^1(\Omega\e):\ u\restriction_{\partial\Omega}=0\right\},
&
\ds\|u\|^2_{\HS^1\e}=\|u\|^2_{\H^1(\Omega\e)}+
\gamma\e\sum_{i\in\I\e}\|u\|_{\L(\partial D\ie)}^2,\\
\HS^1=\H_0^1(\Omega),
&
\ds\|f\|^2_{\HS^1}=\|f\|^2_{\H^1(\Omega)}+
V \|f\|_{\L(\Omega)}^2.
\end{array}\label{H1}
\end{gather}
Finally, we introduce the space $\HS^2$ as in \eqref{scale}:
\begin{gather}\label{H2}
\HS^2=\H^2(\Omega)\cap\H^1_0(\Omega),\quad
\|f\|_{\HS^2}=\|(-\Delta +1)f\|_{\L(\Omega)}.
\end{gather}
Note that, due to uniform regularity of $\Omega$, the norm $\|\cdot\|_{\HS^2}$
is equivalent to the standard norm in $\H^2(\Omega)$; see the estimate \eqref{el:reg} below.
 
Along with   $\J\e:\HS\to\HS\e$ (see \eqref{J}) we   introduce the operator 
$\wt\J\e:\HS\e\to\HS$ via
\begin{gather}\label{wtJ}
(\wt\J\e u)(x)\ceq
\begin{cases}
u(x),&x\in\Omega\e,\\
0,&x\in\Omega\setminus\Omega\e=\overline{\cup_{i\in\I\e}D_{i\e}}.
\end{cases}
\end{gather}

It is known that  there exists a linear operator
$\wt\J\e^1: \H^1(\Omega\e)\to \H^1(\Omega)$
satisfying 
\begin{gather}\label{wtJ1prop}
(\wt\J\e^1 u)\restriction_{\Omega\e}=u,\qquad
\|\wt\J\e^1 u\|_{\H^1(\Omega)}\leq C\| u\|_{\H^1(\Omega\e)},\ \forall u\in \H^1(\Omega\e).
\end{gather}
As before, by $C$ we denote a generic positive constant  independent of $\eps$ and of functions occurring at the estimates  where these constants occur; it may vary from line to line.  
For the construction of such  an operator   we refer, e.g., to \cite[Example~4.10]{MK06}
\footnote{For the construction of the  operator satisfying  \eqref{wtJ1prop},
the holes have to be sufficiently far away from each other; namely,  
$\mathrm{dist}(D\ie,\cup_{i\not=j}D_{j,\eps})\geq \wt C d\e$ should hold.  
In our case (see~\eqref{de})  this inequality holds  with $\wt C=2$.}.
In the following, we shall use the notation  $\wt\J\e^1$ for the restriction of the operator $\wt\J\e^1$ to $\dom(\a\e)=\{u\in\H^1(\Omega\e):\ u\restriction_{\partial\Omega}=0\}$. Evidently, $\mathrm{ran}(\wt\J\e^1)\subset\H^1_0(\Omega)$,
thus $\wt\J\e^1$ is a well-defined linear operator from $\HS\e^1$ to $\HS^1$.
It follows from \eqref{wtJ1prop} that
\begin{gather}\label{wtJ1prop+}
\|\wt\J\e^1 u\|_{\H^1(\Omega)}\leq C\| u\|_{\HS^1\e},\ \forall u\in \dom(\a\e).
\end{gather}

To employ Theorems~\ref{thA1} and \ref{thA1+},
 we also need  a suitable operator $\J\e^1:\HS^1\to\HS\e^1$. For $f\in \H^1_0(\Omega)$ we define $\J\e^1 f$ as follows:
\begin{gather}\label{J1}
(\J\e^1 f)(x) \ceq 
f(x) +\suml_{i\in\I\e}\left[(f\ie-f(x))\wt\phi\ie(x) +  f\ie G\ie(x) \phi\ie(x)\right],\
x\in\Omega\e.
\end{gather}
Here
\begin{itemize}

\item $f\ie$ is the mean value of the function $f(x)$ in the domain $\square\ie$, i.e.,
\begin{gather*}\label{fie}
f\ie\ceq \eps^{-n}\int_{\square\ie}f(x)\d x,
\end{gather*}

\item for $n\geq 3$ the function $\wt\phi\ie $ is given by
\begin{gather*}
\wt\phi\ie\ceq \phi\left({|x-x\ie|\over d\e}\right),
\end{gather*}
where  $\phi:[0,\infty)\to\R$ is a smooth function satisfying \eqref{Phi},

\item for $n=2$ the  function $\wt\phi\ie$ is given by
\begin{equation*}\wt\phi\ie(x)\ceq  \
\begin{cases}
\dfrac{\ln|x-x\ie|-\ln \sqrt{\eps d\e}}{\ln d\e -\ln \sqrt{\eps d\e}}&
   \text{as }\, |x-x\ie|\in (d\e,\sqrt{\eps d\e}),\\
0&\text{as }\, |x-x\ie|\geq \sqrt{\eps d\e},
\end{cases}
\end{equation*}

\item 
$G\ie$, $\phi\ie\in C^\infty(\overline{\Omega\e})$ are given in  \eqref{Gie}, \eqref{Phiie}, respectively.
   
\end{itemize}
Note that 
\begin{gather}\label{supp:phi}
\supp(\wt\phi\ie(x))\subset\overline{Y\ie},\quad
\supp(\phi\ie(x))\subset\overline{Y\ie},
\end{gather}
where $Y\ie$ is given in \eqref{Yie} (for $n=2$ the first inclusion follows from \eqref{de}).
It is easy to see that if $f\in\H^1_0(\Omega)$, then 
$\J\e^1 f\in \H^1(\Omega\e)$ and $\J\e^1 f=0$ on $\partial\Omega$. Thus $\J\e^1$ is indeed a well-defined linear operator from $\HS^1$ to $\HS^1\e$.

It is straightforward to check that 
\begin{gather}\label{cond0:final}
(u,\J\e f)_{\HS\e} - (\wt\J\e u,f)_{\HS}=0,\qquad \forall f\in\HS,\, u\in\HS\e,
\end{gather}
i.e., the condition \eqref{thA1:0}  holds true with any $\delta\e\ge 0$.
Our  goal is to show that the operators $\J\e,\,\wt\J\e,\,\J\e^1,\,\wt\J\e^1$   satisfy the conditions 
\eqref{thA1:1}--\eqref{thA1:3}  with $\delta\e\leq C\eta\e$; here $\eta\e$ is given in \eqref{eta}.
We prove this in Subsections~\ref{subsec:5:1}--\ref{subsec:5:4}.

{
\begin{remark}
The function $\J\e^1 f$ defined by \eqref{J1} resembles  special
test-functions one uses in the so-called \emph{Tartar's energy method} (see, e.g., \cite[Chapter~8]{CD99}). For homogenization problems in  domains with tiny holes this method was used
by Cioranescu and Murat in \cite{CM82}. Our present construction \eqref{J1} is inspired by
test-functions elaborated in \cite{BK97,BCK97} for homogenization problems on manifolds with complicated microstructure.
We adjusted $\J\e^1$ in such a way that for $f\in \H^2(\Omega)$ 
the function $u\ceq \J\e^1 f$ satisfies the Robin boundary conditions $\partial_{n}u+\gamma\e u = 0$
on $\partial D\ie$ (this follows easily from the properties of the cut-off functions $\wt\phi\ie$, and $\phi\ie$ and the
behavior of $G\ie$ on $\partial D\ie$, see~\eqref{G:bk} below). 
\end{remark}

\begin{remark}
One can also define the operator $\J\e^1$ similarly to the operator $\J\e$, i.e. 
\begin{gather}\label{J1:new}
J\e^1 f = f\restriction_{\partial\Omega\e}. 
\end{gather}
In Section~\ref{sec:7} we show that, if $\J\e^1$ is defined by \eqref{J1:new} and $P,Q$ satisfy  \eqref{spec.case},
then the conditions of Theorem~\ref{thA1} are fulfilled  with $\delta\e= C\eta''\e\to 0$, where $\eta''\e $ is defined by  \eqref{eta:prime:2}. In some cases $\eta\e''$ gives better convergence rates comparing with the one in \eqref{th1:est} and \eqref{th2:est}. 
\end{remark}
}

\subsection{Check of condition \eqref{thA1:1}}\label{subsec:5:1}

Let $f\in\dom(\a)=\H^1_0(\Omega)$.
Recall that  the set $Y\ie$ is defined  by \eqref{Yie}.
Using $|\wt\phi\ie|\le 1$, $|\phi\ie|\le 1$ and \eqref{supp:phi}, we get
\begin{align}
\notag
\|\J\e^1 f -\J\e f \|^2_{\HS\e}&
= 
\suml_{i\in\I\e} \left\|(f\ie-f )\wt\phi\ie  +  f\ie G\ie  \phi\ie\right\|^2_{\L(Y\ie)}\\ \label{cond1:1}
&\leq
2\suml_{i\in\I\e} \|f\ie-f  \|^2_{\L(Y\ie)} +  
2\suml_{i\in\I\e}|f\ie|^2 \| G\ie \|^2_{\L(Y\ie)}.
\end{align}

The first term in the right-hand-side of \eqref{cond1:1} is  estimated via
the Poincar\'{e} inequality \eqref{Poincare}. Applying it for $g\ceq f-f\ie$, we  obtain
\begin{equation}\label{P:est}
  \suml_{i\in\I\e}
\|f-f\ie\|^2_{\L(\square\ie)}\leq  
 \pi^{-2}\eps^2 \suml_{i\in\I\e}
\|\nabla f\|^2_{\L(\square\ie)}\leq
 \pi^{-2}\eps^2 \|\nabla f\|^2_{\L(\Omega)}.
\end{equation}

Now, we estimate the second term. 
The Cauchy-Schwarz inequality yields
\begin{gather}\label{CaSw}
|f\ie|^2=\eps^{-2n}\left(\int_{\square\ie} f(x)\d x\right)^2\leq \eps^{-n}\|f\|^2 _{\L(\square\ie)}.
\end{gather}
Let us prove  that
\begin{gather}\label{L2Gie:final} 
\|G\ie \|^2_{\L(Y\ie)}\leq C\eps^{n+2}.
\end{gather} 
Indeed, straightforward calculation gives 
\begin{multline}\label{L2Gie:1}\hspace{-2ex}
 \|G\ie\|^2_{\L(Y\ie)}= 
 C V\e^2\eps^{2n}
 \cdot
\begin{cases}
\ds d\e^{4-n}-\left({\eps\over 2}\right)^{4-n},&n\ge 5,\\[1ex]
\ds \ln{\eps\over 2}-\ln d\e,&n=4,\\[1ex] 
\ds {\eps\over 2} -d\e ,&n= 3,\\[1ex]
\ds {\eps^2\over 4}\left(2 \ln^2{\eps\over 2}-2\ln{\eps\over 2} +1\right)-
 d\e^2\left(2 \ln^2 d\e -2\ln{d\e} +1\right),&n=2,
\end{cases}
\end{multline}
where $C>0$ depends only on $n$.
From \eqref{Vpq}, \eqref{pq+} we conclude
that $\sup_{\eps\in (0,\eps_0]}V\e Q\e^{-1/2}<\infty$ provided \eqref{PfinQfin} holds.
Hence (see the definition of $Q\e$) 
\begin{gather}\label{Ve:de}
V\e^2\leq  C 
d\e^{n-2}\eps^{-n}\text{ if }n\not= 2.
\end{gather}
Using \eqref{Ve:de} and  taking into account that 
$2d\e< {\eps}\le {\eps_0 }<1$, we deduce from \eqref{L2Gie:1}:
\begin{gather}\label{L2Gie:2a} 
\|G\ie \|^2_{\L(Y\ie)} \le C\eps^{n+2}\cdot
\begin{cases}
 \Lambda\e^2 ,&n\ge 5,\\ 
 \deps^2|\ln \deps| ,&n=4
\end{cases}
\end{gather}
(recall that $\Lambda\e$ is given in \eqref{Lambda}).
Moreover, since $\sup_{\eps\in (0,\eps_0]} V\e<\infty$, we also get  
\begin{gather}\label{L2Gie:2b} 
\|G\ie \|^2_{\L(Y\ie)}\leq C\eps^{n+2}\cdot
\begin{cases}
 \eps^2 ,&n=3,\\ 
 \eps^2|\ln\eps|^2 ,&n=2.
\end{cases}
\end{gather} 
Combining \eqref{L2Gie:2a}--\eqref{L2Gie:2b} and taking into account that 
$\eps<1$, $\Lambda\e< 1$ (and, consequently, $\eps|\ln\eps|<1$, $\Lambda\e|\ln\Lambda\e|<1$), we arrive at  the required estimate \eqref{L2Gie:final}
\footnote{In fact, since $\Lambda\e\to 0$ as $\eps\to 0$, one has even better asymptotics 
$\|G\ie \|^2_{\L(Y\ie)}=o(\eps^{n+2})$. 
However, it does not lead to an improvement of the resulting estimate \eqref{cond1:final} -- 
this is hindered by the inequality \eqref{P:est}. }.
Finally, by \eqref{CaSw} and \eqref{L2Gie:final}, we obtain the 
estimate for the second term on the right-hand-side of \eqref{cond1:1}:
\begin{gather}\label{Q:est}
\suml_{i\in\I\e}|f\ie|^2\|G\ie \|^2_{\L(Y\ie)}\leq
C\eps^2\suml_{i\in\I\e} \|f\|^2_{\L(\square\ie)}
\leq C\eps^2\|f\|^2_{\L(\Omega)}.
\end{gather}
 
It follows from \eqref{cond1:1}, \eqref{P:est}, \eqref{Q:est} that
\begin{gather}\label{cond1:final}
\|\J\e^1 f -\J\e f \|_{\HS\e}\leq C\eps\|f\|_{\HS^1}.
\end{gather}

\subsection{Check of condition \eqref{thA1:2}} \label{subsec:5:2}

Let $u\in\dom(\a\e)$.
 One has
\begin{gather}
\label{cond2:1}
\|\wt\J\e^1 u -\wt\J\e u \|^2_{\HS}=
\suml_{i\in\I\e}\|\wt\J\e^1 u\|^2_{\L(D\ie)}.
\end{gather}
Applying Lemma~\ref{lemma:MK06} for $D\ceq\square\ie$, $D_1\ceq D\ie$, $D_2\ceq\square\ie$, $g\ceq \wt\J\e u$, we get
\begin{align}\label{cond2:2}
\|\wt\J\e^1 u\|^2_{\L(D\ie)}
\leq C\left(\deps^n
\|\wt\J\e^1 u\|^2_{\L(\square\ie)} + \eps d\e
\|\nabla \wt\J\e^1 u\|^2_{\L(\square\ie)}\right).
\end{align}
Using  \eqref{wtJ1prop+}, \eqref{cond2:2}, 
we can conclude \eqref{cond2:1} as follows:
\begin{equation}\label{cond2:final} 
\|\wt\J\e u -\wt\J\e^1 u \|_{\HS\e}\leq
C \delta_{\eps,1} \|u\|_{\HS^1\e},
\end{equation}
where 
\begin{gather}\label{delta:1}
\delta_{\eps,1}\ceq \max\left\{\deps^{n/2};\, (\eps d\e)^{1/2}\right\}.
\end{gather}

\subsection{Check of condition~\eqref{thA1:3}}\label{subsec:5:3}

Let $u\in \dom(\a\e)$, $f\in \dom(\A)=\H^2(\Omega)\cap\H^1_0(\Omega)$. 
One has:
\begin{align} \notag
 \a\e[u,\J\e^1 f]-\a[\wt \J\e^1 u,f]=&
 -\underset{I_{\eps}^1\ceq }{\underbrace{\suml_{i\in\I\e}(\nabla \wt \J\e^1 u,\nabla f)_{\L(D\ie)}}}+
  \underset{I_{\eps}^2\ceq }{\underbrace{\suml_{i\in\I\e}\left(\nabla u,\nabla \left((f\ie-f)\wt\phi\ie\right)\right)_{\L(Y\ie)}}}\\&+
  \underset{I_{\eps}^3\ceq }{\underbrace{\suml_{i\in\I\e}(\nabla u,\nabla (f\ie G\ie \phi\ie))_{\L(Y\ie)}+
  \suml_{i\in\I\e}\ga\e\int_{\partial D\ie}
  u\, \overline{\J\e^1f}\d s
 -V(\wt \J\e^1 u,f)_{\L(\Omega)} }}.\label{4.1}
\end{align}
Below we estimate  each term on the right-hand-side of \eqref{4.1}. 

\subsubsection*{Estimate of $I_{\eps}^1$} 
One has using \eqref{wtJ1prop+}:
\begin{align}\notag
|I_{\eps}^1|^2\leq \suml_{i\in\I\e}\|\nabla (\wt \J\e^1 u)\|^2_{\L(D\ie)}\suml_{i\in\I\e}\|\nabla f\|^2_{\L(D\ie)}
&\leq 
\|\wt \J\e^1 u\|_{\H^1(\Omega)}^2\suml_{i\in\I\e}\|\nabla f\|^2_{\L(D\ie)}\\ 
&\leq
C\|u\|^2_{\HS^1\e}\suml_{i\in\I\e}\|\nabla f\|^2_{\L(D\ie)}.
\label{I1:1}
\end{align}
Recall that $\delta_{\eps,1}$ is defined in \eqref{delta:1}.
Applying Lemma~\ref{lemma:MK06} for 
$D\ceq\square\ie$, $D_1\ceq D\ie$, $D_2\ceq \square\ie$ and $g\ceq |\nabla f|$, one has (cf.~\eqref{cond2:2})
\begin{gather}\label{I1:2}
\suml_{i\in\I\e}\|\nabla f\|^2_{\L(D\ie)}\leq
C \delta_{\eps,1}^2
\suml_{i\in\I\e}\|f\|^2_{\H^2(\square\ie)}\le 
C \delta_{\eps,1}^2\|f\|^2_{\H^2(\Omega)}.
\end{gather}
Furthermore, uniform regularity of $\Omega$ yields the  estimate \cite[Theorem~2]{Br61}
\begin{gather}\label{el:reg}
\forall f\in\dom(\A):\quad 
\|f\|^2_{\H^2(\Omega)}\leq 
C\left(\|\A f\|^2_{ \HS}+\|f\|^2_{\HS}\right)\leq
C\|f\|^2_{\HS^2}
\end{gather}
(the last inequality in \eqref{el:reg} follows from $(\A f,f)_{\HS}\ge 0$).
Combining \eqref{I1:1}--\eqref{el:reg}, we get
\begin{gather} \label{I1:final}
|I_{\eps}^1|\leq C\delta_{\eps,1}\|f\|_{\HS^2}\|u\|_{\HS\e^1}.
\end{gather}

\subsubsection*{Estimate of $I_{\eps}^2$}

We denote
\begin{align}\label{Fie}
F\ie&\ceq
\left\{x\in\R^n:\ d\e<|x-x\ie|< \wt d\e\right\},\text{ where }
\wt d\e\ceq
\begin{cases}
2d\e,&n\ge 3,\\
\sqrt{\eps d\e},&n=2.
\end{cases}
\end{align}
It is easy to see that $\supp(\wt\phi\ie)\subset\overline{F\ie}$. Using this fact,
$\|\nabla u\|_{\L(\Omega\e)}\leq \|u\|_{\HS\e^1}$ and 
 $|\wt\phi\ie(x)|\leq 1$, we get 
\begin{align}  
|I_{\eps}^2|&=
\left|\suml_{i\in\I\e}\left[(\nabla u, \wt\phi\ie\nabla f)_{\L(F\ie)} + (\nabla u,(f-f\ie)\nabla \wt\phi\ie)_{\L(F\ie)}\right]\right| \notag \\ 
&\leq 
\|u\|_{\HS\e^1}\left[
\Bigl( {\suml_{i\in \I\e}\|\nabla f\|_{\L(F\ie)}^2\Bigr)^{1/2}}  + 
 {\suml_{i\in \I\e}\|(f-f\ie)\nabla \wt\phi\ie\|_{\L(Y\ie)}^2\Bigr)^{1/2}} \right].\label{I2:1}
\end{align} 
Using   Lemma~\ref{lemma:MK06}
with $D\ceq \square\ie$, $D_1\ceq F\ie$, $D_2\ceq \square\ie$ and $g\ceq |\nabla f|$,
we obtain
\begin{gather}\label{delta:2}
   \|\nabla f\|_{\L(F\ie)}
  \leq C \delta_{\eps,2}\|f\|_{\H^2(\square\ie)},\text{ where }
 \delta_{\eps,2}\ceq
\begin{cases}
 \delta_{\eps,1},&n\ge 3,\\
\max\left\{\deps^{1/2} ;\,\eps^{3/4} d\e^{1/4}\right\},&n=2,
\end{cases}
\end{gather}
where $\delta_{\eps,1}$ is given in \eqref{delta:1}. 
Taking into account \eqref{el:reg}, we get finally
\begin{gather}\label{sest:final}
\left(\suml_{i\in \I\e}\|\nabla f\|_{\L(F\ie)}\right)^{1/2} 
\leq 
C  \delta_{\eps,2} \|f\|_{\HS^2}.
\end{gather}

To proceed further we need the classical H\"older inequality 
$$\|FG\|_{\mathsf{L}^1(Y\ie)}\leq \|F\|_{\mathsf{L}^\mathbf{p}(Y\ie)}\|G\|_{\mathsf{L}^\mathbf{q}(Y\ie)},\quad
\forall 
\mathbf{p},\mathbf{q}\in[1,\infty],\ \mathbf{p}^{-1}+\mathbf{q}^{-1}=1$$
(for $\mathbf{p}=\infty$ one has the convention $\mathbf{p}^{-1}=0$).
Setting  $\mathbf{p}\ceq p/2$, $\mathbf{q}\ceq q/2$, $F\ceq |f-f\ie|^2$ and $G\ceq |\nabla \wt\phi\ie|^2$, we obtain
\begin{gather}\label{Hoelder}
\|(f-f\ie)\nabla \wt\phi\ie\|^2_{\L(Y\ie)}\leq 
\|f-f\ie\|^2_{\mathsf{L}^p(Y\ie)} \|\nabla \wt\phi\ie\|^2_{\mathsf{L}^q(Y\ie)}
,\quad \forall  p,q\in[2,\infty],\  {1\over p}+ {1\over q}={1\over 2}.
\end{gather} 
Recall that $\Lambda\e$ is given in \eqref{Lambda}; 
 due to \eqref{de},  $|\ln \Lambda\e|\ge \ln 4 >1$. We choose $p,\, q$ as follows:
\begin{align}\label{pmax}
&p=\frac {2n}{n-4} \text{\, if }n\geq 5,\quad
&&p=2|\ln \Lambda\e|\text{\, if }n=4,\quad
&&p=\infty\text{\, if }n=2,3,\\\label{qmax}
&q={n\over 2}\text{\, if }n\geq 5,\quad 
&&q={2\over 1-|\ln\Lambda\e|^{-1}}\text{\, if }n=4,\quad 
&&q=2\text{\, if }n=2,3
\end{align} 
(thus $p^{-1}+q^{-1}=1/2$, moreover,  $p$ satisfies \eqref{p}).
Using \eqref{sobolev} for $g\ceq f-f\ie$ and $p$ as in \eqref{pmax} and taking into account \eqref{lemma:c4p:est}, we obtain the estimate  
\begin{gather}\label{pmax+}
\|f-f\ie\|_{\mathsf{L}^p(\square\ie)}\leq C
\|f\|_{\mathsf{H}^2(\square\ie)}
\begin{cases}
\eps^{-1}&n\geq 5,\\
|\ln\Lambda\e|\cdot\eps^{2|\ln\Lambda\e|^{-1}-1}&n= 4,\\
\eps^{-1/2}&n= 3,\\
1&n= 2.\\
\end{cases}
\end{gather} 
Moreover, straightforward calculation gives
  \begin{gather}
    \label{wtphi-est}
      \|\nabla\wt\phi\ie\|_{\mathsf{L}^q(Y\ie)}\leq 
       C\begin{cases}
       d\e,&n\ge 5,\\
       d\e^{1-2|\ln\Lambda\e|^{-1}},&n=4,\\
       d\e^{1/2},&n=3,\\
       |\ln \deps|^{-1/2},&n=2,
    \end{cases} 
  \end{gather} 
where $q$ is defined by \eqref{qmax}.  
Using \eqref{Hoelder}, \eqref{pmax+}, \eqref{wtphi-est} and  
$\Lambda\e^{-2|\ln\Lambda\e|^{-1}}=\exp(2)$,
we get
\begin{gather}
 \|(f-f\ie)\nabla\wt\phi\ie\|_{\L(Y\ie)}\leq  C \delta_{\eps,3}
\|f\|_{\H^2(\square\ie)},\quad\text{where }
\delta_{\eps,3} \ceq 
\begin{cases}
\deps,&n\ge 5,
\\[1mm] \deps |\ln\Lambda\e|,&n=4,
\\[1mm] 
   \deps^{1/2},&n=3,
\\[1mm] 
 |\ln \deps|^{-1/2},&n=2.
\end{cases}
\label{delta:3}
\end{gather}
Taking into account \eqref{el:reg}, we get from \eqref{delta:3}:
\begin{gather}\label{test:final}
  \left(\suml_{i\in\I\e}\|(f-f\ie)\nabla\wt\phi\ie\|_{\L(Y\ie)}^2\right)^{1/2}\leq  C \delta_{\eps,3}
\|f\|_{\HS^2}.
\end{gather}  

Combining \eqref{I2:1}, \eqref{sest:final}, \eqref{test:final}, we arrive at the estimate
\begin{gather}\label{I2:final} 
|I\e^2|\leq 
C \max\left\{\delta_{\eps,2} ;\,\delta_{\eps,3} \right\}\|f\|_{\HS^2}\|u\|_{\HS\e^1}.
\end{gather}
 
\subsubsection*{Estimate of $I_{\eps}^3$}
It is easy to see that 
\begin{gather}
\label{prop1}
(\J\e^1 f)(x)= f\ie(G\ie(x)+1),\quad x\in\partial D\ie.
\end{gather}
Moreover, since  
$$
(\phi\ie-1)\restriction_{\partial D\ie} =\phi\ie\restriction_{\partial Y\ie\setminus\partial D\ie}=0,\qquad
{\partial \phi\ie\over\partial n}\restriction_{\partial D\ie}={\partial \phi\ie\over\partial n}\restriction_{\partial Y\ie\setminus\partial D\ie}=0, 
$$
one has 
\begin{align}\label{prop2}
&{\partial (f\ie G\ie\phi\ie)\over \partial n}= f\ie {\partial G\ie \over \partial n}& &\text{on}\quad\partial D\ie,
\\\label{prop3}
&{\partial (f\ie G\ie\phi\ie)\over \partial n}= 0& &\text{on}\quad\partial Y\ie\setminus\partial D\ie.
\end{align}
Using \eqref{prop1}--\eqref{prop3} and integrating by parts we get
\begin{align*}
I_{\eps}^3
&=\suml_{i\in\I\e}(\nabla u, \nabla (f\ie G\ie\phi\ie) )_{\L(Y\ie)}+
  \suml_{i\in\I\e}\ga\e\int_{\partial D\ie}
  u\, \overline{f\ie}(G\ie+1)\d s
 -V(\wt \J\e^1 u,f)_{\L(\Omega)}\\
&=-\suml_{i\in\I\e}(u, \Delta (f\ie G\ie\phi\ie) )_{\L(Y\ie)}+
  \suml_{i\in\I\e}\int_{\partial D\ie}
  \overline{f\ie}\left({\partial  G\ie \over \partial n}+ \ga\e (G\ie +1)\right)u\d s
-V(\wt \J\e^1 u,f)_{\L(\Omega)}.
\end{align*}
Straightforward calculation, using \eqref{peqe}, \eqref{Ve}, \eqref{Gie}, yields
\begin{gather}\label{G:bk}
\begin{array}{rl}\ds
{\partial G\ie\over \partial n} +\gamma\e (G\ie+1)=0&\text{on } \partial  D\ie,
\end{array}
\end{gather}
whence
\begin{align}\label{I3:1}
I\e^3=-\suml_{i\in\I\e}\big(u, \Delta (f\ie G\ie\phi\ie)  \big)_{\L(Y\ie)}
-V(\wt \J\e^1 u,f)_{\L(\Omega)}. 
\end{align} 
It is easy to see that
\begin{gather}
\label{intGie}
-\int_{\partial D\ie}{\partial G\ie\over \partial n} \d s= V\e \eps^n.
\end{gather}
We denote by $u\ie$ the mean value of the function $(\wt\J\e^1 u)(x)$ in the domain $\square\ie$, i.e.,
$$u\ie=\eps^{-n}\int_{\square\ie}(\wt\J\e^1 u)(x)\d x.$$
Using \eqref{prop2}, \eqref{prop3}, \eqref{intGie} and Gauss's divergence theorem, we can transform \eqref{I3:1} as follows:
\begin{align*}
I\e^3
&= -\suml_{i\in\I\e}\big(u\ie, \Delta (f\ie G\ie\phi\ie)  \big)_{\L(Y\ie)}
-V(\wt \J\e^1 u,f)_{\L(\Omega)}
-\suml_{i\in\I\e}\big(u-u\ie, \Delta (f\ie G\ie\phi\ie)  \big)_{\L(Y\ie)}\\
\notag
&=-\suml_{i\in\I\e}u\ie \overline{f\ie} \int_{\partial D\ie}{\partial G\ie\over \partial n} \d s
-V(\wt \J\e^1 u,f)_{\L(\Omega)}
-\suml_{i\in\I\e}\big(u-u\ie, \Delta (f\ie G\ie\phi\ie)  \big)_{\L(Y\ie)}\\
\notag
&=\underset{I\e^{3,1}\ceq }{\underbrace{\suml_{i\in\I\e}u\ie \overline{f\ie} V\e\eps^n
-V\suml_{i\in\I\e}(\wt \J\e^1 u,f)_{\L(\square\ie)}}}
-\underset{I_{\eps}^{3,2}\ceq }{\underbrace{\suml_{i\in\I\e}\big(u-u\ie, \Delta (f\ie G\ie\phi\ie)  \big)_{\L(Y\ie)}}}\\
&
-\underset{I_{\eps}^{3,3}\ceq }{\underbrace{V(\wt \J\e^1 u,f)_{\L(\Omega\setminus\cup_{i\in\I\e}\square\ie)}}},
\end{align*}

Evidently, one has
$$I\e^{3,1}=
\suml_{i\in\I\e} (V\e-V) u\ie \overline{f\ie} \eps^n -  
\suml_{i\in\I\e}V (\wt \J\e^1 u,f-f\ie)_{\L(\square\ie)}.$$
Then, using \eqref{Poincare}, \eqref{CaSw} and similarly the Cauchy-Schwarz inequality for $u\ie$, we get
\begin{align*}
|I\e^{3,1}|&=
|V\e-V| 
\left(\suml_{i\in\I\e}|f\ie|^2\eps^n\right)^{1/2}
\left(\suml_{i\in\I\e}|u\ie|^2\eps^n\right)^{1/2}
\\ 
&+
V\left(\suml_{i\in\I\e}\|f-f\ie\|^2_{\L(\square\ie)}\right)^{1/2}\left(\suml_{i\in\I\e}\|\wt \J\e^1 u\|^2_{\L(\square\ie)}\right)^{1/2}\\
&\leq |V\e-V|\|f\|^2_{\L(\Omega)}\|\wt \J\e^1 u\|^2_{\L(\Omega)}+
V\eps\|\nabla f\|^2_{\L(\Omega)}\|\wt \J\e^1 u\|^2_{\L(\Omega)},
\end{align*}
whence, taking into account \eqref{wtJ1prop+}, we conclude
\begin{gather}\label{I31:final}
|I\e^{3,1}|\le C\max\{|V\e-V|;\, \eps\}\|f\|_{\HS^1}\|u\|_{\HS^1\e}.
\end{gather}

We denote
\begin{gather}\label{wtY}
\wt Y\ie\ceq \left\{x\in\R^n:\ {\eps\over 4}<|x|<{\eps\over 2}\right\}.
\end{gather}
One has $\supp (D^\al\phi\ie)\subset\overline{\wt Y\ie}$, $|\al|=1,2$.
Using this and the fact that $\Delta G\ie=0$ for $x\not= x\ie$, we estimate the term $I\e^{3,2}$ as follows:
\begin{align*}
|I\e^{3,2}|&=\left|
\suml_{i\in\I\e}\big(u-u\ie, 2f\ie \nabla G\ie\cdot\nabla \phi\ie + f\ie  G\ie\Delta \phi\ie \big)_{\L(Y\ie)}\right|
\leq \left(\suml_{i\in\I\e}\|u-u\ie\|^2_{\L(Y\ie)}\right)^{1/2}
\\
&\times\left(\suml_{i\in\I\e}2|f\ie|^2\left(\|2\nabla G\ie\cdot\nabla\phi\ie\|^2_{\L(\wt Y\ie)}+ \|G\ie\Delta\phi\ie\|^2_{\L(\wt Y\ie)}\right)\right)^{1/2}.
\end{align*}
Using \eqref{Poincare} and \eqref{wtJ1prop+},  we get
\begin{equation}\label{uPoincare}\hspace{-1ex}
 \sum_{i\in\I\e}\|u-u\ie\|^2_{\L(Y\ie)}\leq
 \sum_{i\in\I\e}\|\wt\J\e^1 u-u\ie\|^2_{\L(\square\ie)}\leq
C\eps^2 \sum_{i\in\I\e}\|\nabla (\wt\J\e^1 u)\|^2_{\L(\square\ie)}\leq
C\eps^2\|u\|^2_{\HS\e^1}.
\end{equation}
Furthermore, taking into account \eqref{V+}, one can  easily get the pointwise estimate 
\begin{gather}\label{pointwiseYie}
\text{for }x\in  \wt Y\ie:
\quad
\begin{array}{ll}
|G\ie(x)|\leq 
C \eps \delta_{\eps,4},
&\quad
|\nabla G\ie(x)|\leq 
C \eps,\\[5mm]
|\Delta\phi\ie(x)|\leq 
C \eps^{-2},&\quad
|\nabla \phi\ie(x)|\leq 
C \eps^{-1},
\end{array}
\end{gather}
where
\begin{gather}
\label{delta:4}
\delta_{\eps,4} \ceq 
\begin{cases}
\eps,&n\ge 3,\\ 
 \eps|\ln\eps|,&n=2
\end{cases}
\end{gather}
Using \eqref{CaSw}, \eqref{pointwiseYie} and   $|\wt Y\ie|\leq C\eps^n$, one gets
\begin{gather}\label{long:est}
|f\ie|^2\left(\|2\nabla G\ie\cdot\nabla\phi\ie\|^2_{\L(\wt Y\ie)}+ \|G\ie\Delta\phi\ie\|^2_{\L(\wt Y\ie)}\right)
\leq C(1+\eps^{-2}(\delta_{\eps,4})^2)\|f\|^2_{\L(\square\ie)}.
\end{gather}
Taking into account 
\begin{gather}\label{delta:4:rem}
\eps\leq |\ln\eps_0|^{-1} \delta_{\eps,4}\text{ as }n=2
\end{gather}
(recall that $\eps\leq\eps_0<1$)
we infer from  \eqref{uPoincare}, \eqref{long:est} that
\begin{align} 
\label{I32:final}
|I\e^{3,2}|\leq 
C\delta_{\eps,4} \|f\|_{\HS}\|u\|_{\HS^1}
\end{align}

Finally, using Lemma~\ref{lemma:layer} (for $g\ceq f$ and  $g\ceq \J\e^1 u$) and \eqref{wtJ1prop+},  one gets the estimate for $I\e^{3,3}$:
\begin{gather}\label{I33:final}
|I\e^{3,3}|\leq  
C\eps^2\|\nabla f\|_{\L(\Omega)}\|\nabla (\wt\J\e^1 u)\|_{\L(\Omega)}\leq
C\eps^2\|f\|_{\HS^1}\|u\|_{\HS^1\e},
\end{gather} 
and, as a result (see \eqref{I31:final}, \eqref{I32:final}, \eqref{I33:final}), we arrive at the estimate for 
$I\e^3$:
\begin{gather}\label{I3:final}
|I\e^3|
\leq C\max\{|V\e-V|;\,\eps;\,\delta_{\eps,4};\,\eps^2  \}\|f\|_{\HS^1}\|u\|_{\HS^1\e}
\leq
C\max\{|V\e-V|;\, \delta_{\eps,4}  \}\|f\|_{\HS^1}\|u\|_{\HS^1\e}
\end{gather} 
(in the last inequality in \eqref{I3:final} we use $\eps<1$ and \eqref{delta:4:rem}).

Combining \eqref{I1:final}, \eqref{I2:final},  \eqref{I3:final} and taking into account \eqref{scale:est}, 
we get 
\begin{equation}\label{cond3:final}
|\a\e[u,\J\e^1 f]-\a[\wt \J\e^1 u,f]|\leq
C\max\left\{\delta_{\eps,1};\,\delta_{\eps,2};\,\delta_{\eps,3};\,\delta_{\eps,4};\,|V\e-V|\right\} \|f\|_{\HS^2}\|u\|_{\HS^1\e}.
\end{equation} 

\subsection{Conclusion}\label{subsec:5:4}

Combining \eqref{cond1:final}, \eqref{cond2:final}, \eqref{cond3:final} 
and taking into account \eqref{delta:4:rem}, we conclude 
that the
conditions \eqref{thA1:1}--\eqref{thA1:3} of Theorem~\ref{thA1} hold with
$$ 
\delta\e\ceq C
\max\left\{\delta_{\eps,1};\,\delta_{\eps,2};\,\delta_{\eps,3};\,\delta_{\eps,4};\,|V\e-V|\right\},
$$
where $\delta_{\eps,k}$, $k=1,2,3,4$ are defined in
\eqref{delta:1}, \eqref{delta:2}, \eqref{delta:3}, \eqref{delta:4}, respectively.
It is straightforward to check that  
\begin{gather}\label{delta:eta}
\delta\e\leq C\eta\e,
\end{gather}
where $\eta\e$ is defined 
by \eqref{eta}, Q.E.D.

\begin{remark}\label{remark:smooth}
Recall that $\partial\Omega$ is assumed to be uniformly regular \cite{Br61}.
We need it to guarantee the fulfillment of \eqref{el:reg}.
However, it is well-known that \eqref{el:reg} remains valid under less restrictive assumptions on $\Omega$, for example, if $\partial\Omega$ is compact and belongs to the $\mathsf{C}^{1,1}$ class or if $\Omega$ is a convex domain with Lipschitz boundary  \cite[Theorems~2.2.2.3 and 3.2.1.2]{Gr85}.
\end{remark}

\section{Proof of the main results}\label{sec:6}

\subsection{Proof of Theorem~\ref{th1}}\label{subsec:6:1}

In Section~\ref{sec:5} we established  the fulfillment of the properties \eqref{thA1:0}--\eqref{thA1:3}
with $\delta\e\le C\eta\e$. Then, by Theorem~\ref{thA1}, we immediately
arrive at the required estimate \eqref{th1:est}. Theorem~\ref{th1} is proven.

\subsection{Proof of Theorem~\ref{th2}}\label{subsec:6:1}
Let $g\in\L(\Omega)$, and $f\ceq (\A+\Id)^{-1} g$.
One has  
\begin{align}\notag
&\|(\A\e+\Id)^{-1}\J\e g  - \J\e(\A+\Id)^{-1} g \|_{\H^1(\Omega\e)}
\\ \notag
&\qquad\leq
\|(\A\e+\Id)^{-1}\J\e g  - \J\e^1(\A+\Id)^{-1} g \|_{\H^1(\Omega\e)}+
\|(\J\e^1 - \J\e)(\A+\Id)^{-1} g \|_{\H^1(\Omega\e)}\\  
&\qquad\leq
\|(\A\e+\Id)^{-1}\J\e g  - \J\e^1(\A+\Id)^{-1} g \|_{\HS\e^1}+
\|(\J\e^1 - \J\e)f\|_{\H^1(\Omega\e)}.\label{HL:1} 
\end{align} 
In Section~\ref{sec:5} we proved that \eqref{thA1:0}--\eqref{thA1:3} hold with $\delta\e\leq C\eta\e$. 
Hence by Theorem~\ref{thA1+} we have
\begin{align}
\|(\A\e+\Id)^{-1}\J\e g  - \J\e^1(\A+\Id)^{-1} g \|_{\HS\e^1}\leq C\eta\e\|g\|_{\L(\Omega)}.\label{HL:1+} 
\end{align} 

Now, we estimate the second term in the right-hand-side of \eqref{HL:1+}. 
Recall that $Y\ie $ is given in \eqref{Yie} and the inclusions \eqref{supp:phi} hold.
One has: 
\begin{gather}\label{HL:2}
 \|(\J\e^1 - \J\e)f \|_{\H^1(\Omega\e)}^2\leq 
2\suml_{i\in\I\e}\|(f-f\ie)\wt\phi\ie\|^2_{\H^1(Y\ie)}+
2\suml_{i\in\I\e}\|f\ie G\ie \phi\ie\|^2_{\H^1(Y\ie)}.
\end{gather} 
Using \eqref{P:est}, \eqref{Q:est} and $|\phi\ie|\leq 1$, $|\wt\phi\ie|\leq 1$,  we get
\begin{align}\label{HL:3a}
\suml_{i\in\I\e}\|(f-f\ie)\wt\phi\ie\|^2_{\L(Y\ie)}
&\leq \suml_{i\in\I\e}\|f-f\ie \|^2_{\L(\square\ie)}\leq 
C\eps^2\suml_{i\in\I\e}\|\nabla f\|^2_{\L(\square\ie)}\leq 
C\eps^2\|f\|^2_{\HS^1},\\\label{HL:3b}
\suml_{i\in\I\e}\|f\ie G\ie \phi\ie\|^2_{\L(Y\ie)}&\leq
\suml_{i\in\I\e}|f\ie|^2\|G\ie\|^2_{\L(Y\ie)} \leq C\eps^2
\suml_{i\in\I\e}\|f\|^2_{\L(\square\ie)}
\le
C\eps^2\|f\|^2_{\HS^1}.
\end{align}
Moreover, one has (see~\eqref{sest:final}, \eqref{test:final} and note
$\supp(\wt\phi\ie)\subset\overline{F\ie}$, where the set $F\ie$ is defined in \eqref{Fie})
\begin{align}\notag
\suml_{i\in\I\e}\|\nabla((f-f\ie)\wt\phi\ie)\|^2_{\L(Y\ie)}
&\leq 
2\suml_{i\in\I\e}\|\nabla f\|^2_{\L(F\ie)}+
2\suml_{i\in\I\e}\| (f-f\ie)\nabla \wt\phi\ie)\|^2_{\L(Y\ie)}
\\&\le
C \max\left\{(\delta_{\eps,2})^2;\,(\delta_{\eps,3})^2 \right\}\|f\|^2_{\HS^2},\label{HL:4}
\end{align}
where $\delta_{\eps,2} $, $\delta_{\eps,3} $ are given in \eqref{delta:2}, \eqref{delta:3}, respectively.
Finally,
using  \eqref{CaSw} and taking into account that
$|\phi\ie|\leq 1$, $|\nabla\phi\ie|\leq C\eps^{-1}$, we obtain
\begin{align} 
\suml_{i\in\I\e}\|\nabla(f\ie G\ie \phi\ie)\|^2_{\L(Y\ie)}
&\leq C\eps^{-n}\suml_{i\in\I\e}\|f\|_{\L(\square\ie)}^2\left(\|\nabla G\ie \|^2_{\L(Y\ie)}+
  \eps^{-2}\| G\ie \|^2_{\L(\wt Y\ie)}\right),
\label{HL:5}
\end{align}
where $\wt Y\ie$ is given in \eqref{wtY}; here we have used that 
$\supp(\nabla\phi\ie)\subset\overline{\wt Y\ie}$.
Straightforward calculation yields
\begin{gather}
\begin{array}{ll}
\|\nabla G\ie \|^2_{\L(Y\ie)}\le \|\nabla G\ie\|^2_{\L(\R^n\setminus\overline D\ie)}={V\e^2 Q\e^{-1}}\eps^n,&n\ge 3,\\[2mm]
\|\nabla G\ie \|^2_{\L(Y\ie)}\le \|\nabla G\ie\|^2_{\L(B_1(x\ie)\setminus\overline D\ie)}={V\e^2 Q\e^{-1}}\eps^2,&n=2,
\end{array}\label{nablaGie}
\end{gather} 
where $B_1(x\ie)$ is the unit ball with the center at $x\ie$.
Moreover, using the pointwise estimates  \eqref{pointwiseYie} and $|\wt Y\ie|\leq C\eps^n$, we get
\begin{gather}
\begin{array}{ll}
\| G\ie\|^2_{\L(\wt Y\ie)}\le C\eps^{n+2}(\delta_{\eps,4})^2,
\end{array}\label{L2Gie:3}
\end{gather}
where $\delta_{\eps,4}$ is given in \eqref{delta:4}.
It follows from \eqref{HL:5}--\eqref{L2Gie:3} that
\begin{gather}\label{HL:6}
\suml_{i\in\I\e}\|\nabla(f\ie G\ie \phi\ie)\|^2_{\L(Y\ie)}\leq 
C\max\left\{{V\e^2 Q\e^{-1}};\,(\delta_{\eps,4})^2\right\} \|f\|^2_{\L(\Omega)}.
\end{gather}
Combining \eqref{HL:2}--\eqref{HL:4}, \eqref{HL:6} and taking into account \eqref{scale:est}, \eqref{delta:eta}
and $\eps\leq C\delta_{\eps,4}$,
we arrive at  
\begin{align} \notag
 \|(\J\e^1 - \J\e)f \|_{\H^1(\Omega\e)}&\leq 
  C \max\left\{V\e Q\e^{-1/2};\, \delta_{\eps,2} ;\, \delta_{\eps,3} ;\,\delta_{\eps,4} \right\}\|f\|^2_{\HS^2}\\&\leq
  C \max\left\{V\e Q\e^{-1/2};\,\eta\e\right\}\|f\|_{\HS^2}=
  C \max\left\{V\e Q\e^{-1/2};\,\eta\e\right\}\|g\|_{\L(\Omega)}.\label{HL:7}
\end{align}

The required estimate \eqref{th2:est} follows from
 \eqref{HL:1}, \eqref{HL:1+}, \eqref{HL:7} and the definition \eqref{eta:prime:1} of $\eta\e'$.

\subsection{Proof of Theorem~\ref{th3}}\label{subsec:6:3}
Let $g\in\L(\Omega)$, and $f\ceq (\A+\Id)^{-1} g$.
One has
\begin{multline} 
\|(\A\e+\Id)^{-1}\J\e g  - (\Id+G\e)\J\e(\A+\Id)^{-1} g \|_{\H^1(\Omega\e)}
\\   
\leq  
\|(\A\e+\Id)^{-1}\J\e g  - \J\e^1(\A+\Id)^{-1} g \|_{\HS^1\e}+
\|(\J\e^1 - (\Id+G\e)\J\e))f \|_{\H^1(\Omega\e)}.
\label{corrector:1} 
\end{multline}
Again, using Theorem~\ref{thA1+} and taking into account \eqref{eta:eta}, we have
\begin{gather}
\label{corrector:1a}
\|(\A\e+\Id)^{-1}\J\e g  - \J\e^1(\A+\Id)^{-1} g \|_{\HS^1\e}\leq C\wt\eta\e\|g\|_{\L(\Omega)},
\end{gather}
where $\wt\eta\e$ is defined by \eqref{wtdeltan}.

Now, we estimate the second term on the right-hand-side of \eqref{corrector:1}.
One has
\begin{align}\label{corrector:1b}
(\J\e^1 - (\Id+G\e)\J\e))f=
\suml_{i\in\I\e}(f\ie-f)\wt\phi\ie
+
\suml_{i\in\I\e}(f\ie-f)G\ie\phi\ie.
\end{align} 
Using \eqref{eta:eta}, \eqref{delta:eta},
\eqref{HL:3a},  \eqref{HL:4}, we get the estimate 
\begin{align}\notag
\Bigl\|\suml_{i\in\I\e}(f-f\ie)\wt\phi\ie\Bigr\|_{\H^1(\Omega\e)}
=
\left(\suml_{i\in\I\e}\|(f-f\ie)\wt\phi\ie\|_{\H^1(Y\ie)}^2\right)^{1/2}
&\leq
C\max\{\eps;\, \delta_{\eps,2};\, \delta_{\eps,3}\}\|f\|_{\HS^2}
\\ \label{corrector:1c}
&\le C\wt\eta\e\|f\|_{\HS^2}=
C\wt\eta\e\|g\|_{\HS}.
\end{align}
To proceed further, we observe that
\begin{gather}\label{Linfty}
\left.
\begin{array}{ll}
n\geq 3:& \|G\ie\|_{\mathsf{L}^\infty(\R^n\setminus D\ie)}\\[2mm]
n= 2:   & \|G\ie\|_{\mathsf{L}^\infty(B_1(x\ie)\setminus D\ie)}
\end{array}\right\}
\leq \|G\ie\|_{\mathsf{L}^\infty(\partial D\ie)}={V\e Q\e^{-1}}
={P\e\over P\e+Q\e}
\leq 1.
\end{gather}
Using \eqref{Poincare}, \eqref{Linfty}, and $|\phi\ie|\le 1$, 
we get
\begin{equation}\hspace{-1ex}
\Big\|\suml_{i\in\I\e}(f\ie-f)G\ie\phi\ie\Big\|_{\L(\Omega\e)}\leq 
 \left(\suml_{i\in\I\e}\|f\ie-f\|_{\L(\square\ie)}^2\right)^{1/2}\leq
C\eps\|\nabla f\|_{\L(\Omega)}\leq C\wt\eta\e\|f\|_{\HS^1}.
\label{corrector:2} 
\end{equation} 
Furthermore, we have
\begin{equation*}
\Bigl\|\nabla\Bigl(\suml_{i\in\I\e}(f\ie-f)G\ie\phi\ie\Bigr)\Bigr\|_{\L(\Omega\e)}
\leq
\underbrace{\left(\suml_{i\in\I\e}\| G\ie\phi\ie \nabla f\|^2_{\L(Y\ie)}\right)^{1/2}}_{\mathcal{I}\e^1\ceq}
+
\underbrace{\left(\suml_{i\in\I\e}\|(f-f\ie)\nabla(G\ie\phi\ie) \|^2_{\L(Y\ie)}\right)^{1/2}}_{\mathcal{I}\e^2\ceq}.
\end{equation*}

\subsubsection*{Estimate of $\mathcal{I}\e^1$}
One has (cf.~\eqref{Hoelder}):
\begin{gather}\label{corrector:3}
\mathcal{I}\e^1\le 
\left(\suml_{i\in\I\e}\|\nabla f\|^2_{\mathsf{L}^p(Y\ie)}
\|G\ie\phi\ie \|^2_{\mathsf{L}^q(Y\ie)}\right)^{1/2},\ p,q\in[2,\infty],\ {1\over p}+{1\over q}={1\over 2}.
\end{gather}
 We choose  $p$ and $q$ as follows:
\begin{gather*}
p=\ds\frac {2n}{n-2},\ q=n\text{ if }n\geq 3
\quad\text{and}\quad
p=6,\ q=3 \text{ if }n=2,
\end{gather*}
that is $p^{-1}+q^{-1}=1/2$, moreover, $p$ satisfies \eqref{p:grad}. 
Then, using the estimate \eqref{sobolev:grad}, we get
\begin{gather}\label{pmax+:grad}
\|\nabla f\|_{\mathsf{L}^p(\square\ie)}\leq C
\|f\|_{\mathsf{H}^2(\square\ie)}
\begin{cases}
\eps^{-1}&n\geq 3,\\
\eps^{-2/3}&n= 2.
\end{cases}
\end{gather} 
Moreover, using \eqref{Qpositive} and $|\phi\ie|\le 1$, we obtain via direct calculation (see the similar calculations in \eqref{L2Gie:1}, where $q=2$):
\begin{gather}\label{wtphi-est:grad}
\|G\ie\phi\ie\|_{\mathsf{L}^q(Y\ie)}\le 
\|G\ie\|_{\mathsf{L}^q(Y\ie)}
 \leq  C
 \begin{cases}
       \eps^{n/(n-2)},&n\ge 4,\\
       \eps^3|\ln\eps|^{1/3},&n=3,\\
       \eps^{8/3}|\ln\eps|  ,&n=2.
    \end{cases} 
  \end{gather}
Combining \eqref{corrector:3}, \eqref{pmax+:grad}, \eqref{wtphi-est:grad},
we arrive at the estimate
\begin{gather}\label{mathI1:final}
\mathcal{I}^1\e\leq C\|f\|_{\H^2(\Omega)}
\begin{cases}
       \eps^{2/(n-2)},&n\ge 4,\\
       \eps^{2}|\ln\eps |^{1/3},&n=3,\\
       \eps^{2}|\ln\eps| ,&n=2.
    \end{cases} 
\end{gather}

\subsubsection*{Estimate of $\mathcal{I}\e^2$}
One has (cf.~\eqref{Hoelder}):
\begin{gather}\label{corrector:3+}
\mathcal{I}\e^2\le 
\left(\suml_{i\in\I\e}\|f-f\ie\|^2_{\mathsf{L}^p(Y\ie)}
\|\nabla (G\ie\phi\ie) \|^2_{\mathsf{L}^q(Y\ie)}\right)^{1/2},\ p,q\in[2,\infty],\ 
{1\over p}+{1\over q}={1\over 2}.
\end{gather}
We choose $p$ and $q$ by \eqref{pmax} and \eqref{qmax}, respectively. 
The first factor on the right-hand-side of \eqref{corrector:3+} is already estimated in \eqref{pmax+}.
The second factor can be estimated via simple calculation (taking into account \eqref{eta:eta}):
\begin{gather}\label{corrector:4}
\|\nabla (G\ie\phi\ie) \|_{\mathsf{L}^q(Y\ie)}\leq 
C\left(\|\nabla G\ie \|_{\mathsf{L}^q(Y\ie)}+\eps^{-1}\| G\ie \|_{\mathsf{L}^q(\wt Y\ie)}\right)\leq C
\begin{cases}
\eps^{n/(n-2)},&n\ge 5,\\
\eps^{2-4|\ln \deps |^{-1}},&n= 4,\\
\eps^{3/2},&n= 3,\\
\eps,&n= 2
\end{cases}
\end{gather}
(recall that $\wt Y\ie$ is define by \eqref{wtY}, $\supp(\nabla \phi\ie)\subset\overline{\wt Y\ie}$,  and
$|\nabla\phi\ie|\leq C\eps^{-1}$).
Combining \eqref{el:reg}, \eqref{pmax+}, \eqref{corrector:3+}, \eqref{corrector:4},    
and taking into account that $
\eps^{-2|\ln\Lambda\e|^{-1}}\leq C
$ as $n=4$ (this estimate follows from \eqref{eta:eta}), 
we get
\begin{gather}\label{mathI2:final}
\mathcal{I}^2\e\leq C\|f\|_{\HS^2}
\begin{cases}
       \eps^{2/(n-2)},&n\ge 5,\\
         \eps|\ln\eps|,&n=   4,\\
       \eps,&n=2,3.
    \end{cases}
\end{gather}
It follows from  \eqref{mathI1:final}, \eqref{mathI2:final} that
\begin{gather}\label{corrector:5}
\big\|\nabla(\suml_{i\in\I\e}(f\ie-f)G\ie\phi\ie)\big\|_{\L(\Omega\e)}\leq C\wt\eta\e \|f\|_{\HS^2}=
C\wt\eta\e\|g\|_{\L(\Omega)}
\end{gather} 
(in the case $n=2$ we use $\eps\leq C\eps|\ln\eps|$ with $C=|\ln\eps_0|^{-1}$).

Combining \eqref{corrector:1}--\eqref{corrector:1c}, \eqref{corrector:2}, \eqref{corrector:5},
we arrive at the required estimate \eqref{th3:est}.

\subsection{Proof of Theorem~\ref{th4}}\label{subsec:6:4}

Let us apply Theorem~\ref{thA2}.
We have been  already proven  that the estimate \eqref{thA2:1}  holds with 
$\rho\e=\wt\rho\e= C\eta\e$. 
Moreover, due to \eqref{cond0:final}, one has $\J\e^*=\wt\J\e$ and thus 
$$
\left((\A\e+\Id)^{-1} \J\e-\J\e(\A+\Id)^{-1}\right)^*=
\wt\J\e(\A\e+\Id)^{-1} - (\A+\Id)^{-1}\wt\J\e,
$$
whence the estimate \eqref{thA2:2} also holds with $\wt\rho\e = C\eta\e$.

Since $\|u\|_{\L(\Omega\e)}=\|\wt\J\e u\|_{\L(\Omega)}$, we conclude that the estimate
\eqref{thA2:4} holds with $\wt\mu\e=1$, $\wt\nu\e=0$.
Finally, one has
\begin{gather}\label{spec:1}
\|f\|^2_{\HS}=\|\J\e f\|^2_{\HS\e}+\suml_{i\in\I\e}\|f\|^2_{\L(D\ie)}.
\end{gather}
Using Lemma~\ref{lemma:MK06} (for
$D\ceq \square\ie$, $D_1\ceq D\ie$, $D_2\ceq \square\ie\setminus \overline{D\ie}$, $g\ceq f$), 
we get 
\begin{align}\notag
\suml_{i\in\I\e}\|f\|_{\L(D\ie)}^2
&\leq
C\suml_{i\in\I\e}\left(\Lambda\e^n\|f\|_{\L(\square\ie\setminus \overline{D\ie})}^2+ \eps d\e 
\|\nabla f\|_{\L(\square\ie)}^2 \right)\\
&\label{spec:2}\leq 
C\left(\Lambda\e^n\|J\e f\|_{\HS\e}^2+
\eps d\e\cdot \a[f,f]\right)
\end{align}
(here we use $|\square\ie\setminus  {D\ie}|\ge C\eps^n$).
It follows from \eqref{spec:1}--\eqref{spec:2} that
\begin{gather*}\label{spec:3}
\|f\|^2_{\HS}\leq (1+C\Lambda\e^n)\|\J\e f\|^2_{\HS\e}+C\eps d\e\cdot \a[f,f],
\end{gather*}
i.e., the estimate \eqref{thA2:3} holds with $\mu\e\ceq 1+C\Lambda\e^n$ and $\nu\e\ceq C\eps d\e$.
Then,
applying Theorem~\ref{thA2}, we arrive at  
\begin{gather}\label{spec:4}
\widetilde{d_H}\left(\sigma(\A\e),\,\sigma(\A)\right)\leq 
C \left({\eps d\e}+
\sqrt{(\eps d\e)^2+\eta\e^2(1+C\Lambda\e^n)}
\right).
\end{gather}
Evidently, $\eps d\e <\eta\e$; also, we have $\Lambda\e< 1$. Hence \eqref{spec:4} implies the required estimate \eqref{th4:est}.

\subsection{Proof of Theorem~\ref{th5}}\label{subsec:6:5}

Let $v\in\L(\Omega\e)$. We set $u\ceq (\A\e+\Id)^{-1}v$. 
Lemma~\ref{lemma:crucial} yields
\begin{align}\label{infty:2}
\suml_{i\in\I\e}\|u\|_{\L(Y\ie)}^2\leq
C\max\left\{P\e^{-1},Q\e^{-1}\right\}\a\e[u,u]
\end{align}
(recall that $Y\ie$ is given in \eqref{Yie}).
We denote $R\ie\ceq \square\ie\setminus\overline{D\ie\cup Y\ie}.$
Using Lemma~\ref{lemma:MK06} for
$D\ceq \square\ie$, $D_1\ceq R\ie$, $D_2\ceq Y\ie$, $g\ceq \wt\J\e^1 u$ (note that $|R\ie|\leq C\eps^n$, $|Y\ie|\geq C\eps^n$) and \eqref{infty:2}, we obtain
\begin{align}\notag
\suml_{i\in\I\e}\|u\|_{\L(R\ie)}^2&\leq
C\suml_{i\in\I\e}\left(\|u\|_{\L(Y\ie)}^2+\eps^2
\|\nabla (\wt\J\e^1 u)\|_{\L(\square\ie)}^2 \right)\\ \label{infty:1}
&
\leq C\max\left\{P\e^{-1},Q\e^{-1}\right\}\a\e[u,u]+
C \eps^2 \|\nabla (\wt\J\e^1 u)\|_{\L(\Omega)}^2.
\end{align}  
Finally,  by 
Lemma~\ref{lemma:layer} applied for $g\ceq \wt\J\e^1 u$ we have
\begin{gather}\label{infty:3}
\|u\|_{\L(\Omega\setminus\overline{\cupl_{i\in\I\e}\square\ie})}
\leq 
C\eps\|\nabla (\wt J\e^1 u)\|_{\L(\Omega)}.
\end{gather}
Combining \eqref{wtJ1prop+}, \eqref{infty:2}--\eqref{infty:3},  we  get
\begin{align}\notag\hspace{-2ex}
\|(\A\e+\Id)^{-1}v\|_{\L(\Omega\e)}^2=
\|u\|_{\L(\Omega\e)}^2&=
\suml_{i\in\I\e}\|u\|_{\L(Y\ie)}^2+\suml_{i\in\I\e}\|u\|_{\L(R\ie)}^2+
\|u\|^2_{\L(\Omega\setminus\overline{\cupl_{i\in\I\e}\square\ie})}
\\\notag
&\leq
C\max\left\{P\e^{-1};\, Q\e^{-1}\right\}
\a\e[u,u]+ C\eps^2\|\nabla (\wt\J\e^1 u)\|_{\L(\Omega)}^2\\\notag
&
\leq
C\max\left\{P\e^{-1};\, Q\e^{-1};\, \eps^2\right\}\|u\|^2_{\HS^1\e}
\\\notag
&=
C\max\left\{P\e^{-1};\, Q\e^{-1};\, \eps^2\right\}(\A\e u+u,u)_{\L(\Omega\e)}
\\\notag
&\le
C\max\left\{P\e^{-1};\, Q\e^{-1};\, \eps^2\right\}\|\A\e u+u\|_{\L(\Omega\e)}\|u\|_{\L(\Omega\e)}
\\
&=
C\max\left\{P\e^{-1};\, Q\e^{-1};\, \eps^2\right\}\|v\|_{\L(\Omega\e)}\|(\A\e+\Id)^{-1}v\|_{\L(\Omega\e)}
\label{long:formula}
\end{align} 
(note that the constant $C$ in \eqref{long:formula} changes from line to line).
The required estimate \eqref{th5:est} follows immediately from \eqref{long:formula}.
Theorem~\ref{th5} is proven.

{ 

\section{Case \eqref{spec.case} revisited}\label{sec:7}
 
In this section we obtain another
$(\H^1\to \L)$ operator estimate for $P,Q$ satisfying \eqref{spec.case}. 
In some cases (see the discussion after Theorem~\ref{th2a}) the new estimate gives better convergence rate than the estimates \eqref{th1:est} and \eqref{th2:est}.
 
We  define for $P<\infty$:
\begin{gather}\label{eta:prime:2}
\eta\e''\ceq \max\left\{P\e Q\e^{-1/2};\,|P\e-P|;\,\eps;\,\Lambda\e^{n/2}\right\}.
\end{gather}
It is easy to see that  $\eta\e''\to 0$  as $\eps\to 0$
provided \eqref{spec.case} holds.
 
\begin{theorem}\label{th2a}
Let $P,Q$ satisfy \eqref{spec.case}.
Then one has 
\begin{gather}\label{th2a:est} 
\left\|(\A\e+\Id)^{-1}\J\e  - \J\e(\A+\Id)^{-1}\right\|_{\L(\Omega)\to \H^1(\Omega\e)}\le 
C\eta\e''.
\end{gather}
\end{theorem}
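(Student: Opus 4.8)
The plan is to apply Theorem~\ref{thA1+} with the operators $\J\e$, $\wt\J\e$ and $\wt\J\e^1$ exactly as constructed in Section~\ref{sec:5}, but with the new choice $\J\e^1\ceq\J\e$, i.e.\ $\J\e^1 f=f\restriction_{\Omega\e}$ now regarded as a map $\HS^1\to\HS\e^1$ (legitimate, since $f\restriction_{\Omega\e}\in\dom(\a\e)$ whenever $f\in\H^1_0(\Omega)$). The key point is that the verifications of \eqref{thA1:0} and \eqref{thA1:2} from Section~\ref{sec:5} do not involve $\J\e^1$ and remain valid verbatim, with $\delta\e$ bounded there by $C\eps$ and by $C\delta_{\eps,1}\le C\max\{\Lambda\e^{n/2};\eps\}$ respectively, while \eqref{thA1:1} becomes trivial, since now $\J\e^1 f-\J\e f=0$. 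Hence everything reduces to re-checking condition \eqref{thA1:3} for this simpler $\J\e^1$, which is where $\eta\e''$ enters; once that is done, Theorem~\ref{thA1+} gives $\|\Res\e\J\e-\J\e^1\Res\|_{\HS\to\HS\e^1}\le 6\delta\e\le C\eta\e''$, and since $\J\e^1=\J\e$ and $\|\cdot\|_{\H^1(\Omega\e)}\le\|\cdot\|_{\HS\e^1}$, this is precisely \eqref{th2a:est}.

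For \eqref{thA1:3}, let $u\in\dom(\a\e)$ and $f\in\dom(\A)$. Since both instances of \eqref{spec.case} give $V=P$, a direct computation (using $\wt\J\e^1 u=u$ on $\Omega\e$) yields
\begin{gather*}
\a\e[u,\J\e^1 f]-\a[\wt\J\e^1 u,f]=
-\suml_{i\in\I\e}(\nabla\wt\J\e^1 u,\nabla f)_{\L(D\ie)}
+\suml_{i\in\I\e}\ga\e\int_{\partial D\ie}u\,\overline f\d s
-V(\wt\J\e^1 u,f)_{\L(\Omega)}.
\end{gather*}
The first sum is the term $I\e^1$ of Section~\ref{sec:5} and is bounded, as in \eqref{I1:final} (via Lemma~\ref{lemma:MK06} and \eqref{el:reg}), by $C\delta_{\eps,1}\|f\|_{\HS^2}\|u\|_{\HS\e^1}$. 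For the remaining two terms I would introduce the means $f\ie,u\ie$ of $f$ and $\wt\J\e^1 u$ over $\square\ie$ and the means $\wt f\ie,\wt u\ie$ over $\partial D\ie$, then split off the ``cell'' part $\suml_i\eps^n\bigl(P\e\wt u\ie\overline{\wt f\ie}-Pu\ie\overline{f\ie}\bigr)$ using $\ga\e|\partial D\ie|=P\e\eps^n$ and $V|\square\ie|=P\eps^n$. The oscillating remainders $\suml_i\ga\e\int_{\partial D\ie}(u-\wt u\ie)(\overline f-\overline{\wt f\ie})\d s$, $V\suml_i\int_{\square\ie}(\wt\J\e^1 u-u\ie)(\overline f-\overline{f\ie})\d x$ and the boundary-layer piece $V(\wt\J\e^1 u,f)_{\L(\Omega\setminus\cup_i\square\ie)}$ are estimated by the trace--Poincar\'e inequality \eqref{lemma:tildeg:est2} (together with $\ga\e d\e\le CP\e Q\e^{-1}$, immediate from the definitions of $P\e,Q\e$ and \eqref{lnln}), by the Poincar\'e inequality \eqref{Poincare}, and by Lemma~\ref{lemma:layer}, giving a total of size $C\max\{P\e Q\e^{-1};\eps\}\|f\|_{\HS^1}\|u\|_{\HS\e^1}$. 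In the cell part one writes $P\e\wt u\ie\overline{\wt f\ie}-Pu\ie\overline{f\ie}=(P\e-P)u\ie\overline{f\ie}+P\e(\wt u\ie-u\ie)\overline{\wt f\ie}+P\e u\ie(\overline{\wt f\ie}-\overline{f\ie})$ and, after Cauchy--Schwarz in $i$ using $|g\ie|^2\eps^n\le\|g\|_{\L(\square\ie)}^2$ and estimate \eqref{lemma:tildeg:est1} (which bounds $|\wt g\ie-g\ie|$ by $C\eps^{-n/2}(\eps^2+Q\e^{-1})^{1/2}\|\nabla g\|_{\L(\square\ie)}$), obtains for the three parts the bounds $|P\e-P|$, $P\e(\eps+Q\e^{-1/2})$, $P\e(\eps+Q\e^{-1/2})$, times $\|f\|_{\HS^2}\|u\|_{\HS\e^1}$.

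Finally, using that under \eqref{spec.case} the quantities $Q\e^{-1/2}$ and $P\e$ are bounded on $(0,\eps_0]$ by \eqref{pq+}, so that $P\e Q\e^{-1}\le CP\e Q\e^{-1/2}$ and $P\e\eps\le C\eps$, every contribution above is $\le C\eta\e''\|f\|_{\HS^2}\|u\|_{\HS\e^1}$; hence \eqref{thA1:3} holds with $\delta\e\le C\eta\e''$ and the theorem follows. The main obstacle is the bookkeeping for the boundary integral $\suml_i\ga\e\int_{\partial D\ie}u\overline f\d s$: one has to recognise that its leading order is exactly $V(\wt\J\e^1 u,f)_{\L(\Omega)}$ and that every error produced by replacing boundary traces by cell and sphere means carries either a factor $\ga\e d\e\sim P\e Q\e^{-1}$ or a factor $P\e(\eps^2+Q\e^{-1})^{1/2}$ --- it is precisely here, through Lemma~\ref{lemma:tildeg}, that the term $P\e Q\e^{-1/2}$ in $\eta\e''$ is forced, whereas the purely geometric errors are only of size $\delta_{\eps,1}\le\max\{\Lambda\e^{n/2};\eps\}$, which is why $\eta\e''$ can be sharper than $\eta\e'$ in some parameter ranges.
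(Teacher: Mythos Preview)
Your proposal is correct and follows essentially the same approach as the paper's proof: both take $\J\e^1=\J\e$, reuse the verifications of \eqref{thA1:0}--\eqref{thA1:2} from Section~\ref{sec:5}, and reduce everything to re-checking \eqref{thA1:3} via a decomposition that isolates the cell averages $P\e\wt u\ie\overline{\wt f\ie}-Pu\ie\overline{f\ie}$ and controls the resulting mean differences through Lemma~\ref{lemma:tildeg}. The only cosmetic difference is that your remainder terms are organised symmetrically in $u$ and $f$ (you subtract both sphere means in the boundary remainder and both cell means in the volume remainder), whereas the paper subtracts only the mean of $f$ and then invokes Lemma~\ref{lemma:crucial+} to control $\gamma\e\sum_i\|u\|_{\L(\partial D\ie)}^2$; your symmetric splitting yields the slightly smaller constant $P\e Q\e^{-1}$ in place of the paper's $P\e Q\e^{-1/2}$ for that term, but since both are absorbed into $\eta\e''$ the outcome is identical.
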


Before to present the proof of Theorem~\ref{th2a},
we compare it with the estimates \eqref{th1:est} and \eqref{th2:est}.
Let $P,Q$ satisfy \eqref{spec.case}.
In this case $P\e=P=V$, whence
\begin{gather}\label{kappa}
V\e-V=P\e-P-\kappa\e,\quad\text{where }\kappa\e\ceq{P^2\e\over  Q\e }{1\over 1+P\e Q\e^{-1}}\overset{\text{via }\eqref{spec.case}}\sim {P^2\e\over  Q\e}.
\end{gather}
Using \eqref{kappa}, we get
\begin{align}\label{eta:altern}
\eta\e&=
\max\left\{|P\e-P|+\mathcal{O}(P\e^2 Q\e^{-1});\,\eps;\,R\e\Lambda\e^{n/2} \right\},\quad
R\e\ceq 
\begin{cases}
\Lambda\e^{1-n/2},& n\ge 5,\\[1mm]
\Lambda\e^{-1}|\ln\deps|,&n=4,\\[1mm]
\Lambda\e^{-1},& n= 3,  \\[1mm]
\Lambda\e^{-1}|\ln\deps|^{-{1/2}} ,& n=2,
\end{cases}
\\\label{eta:1:altern}
\eta\e'&= \max\left\{r\e P\e Q\e^{-1/2};\,|P\e-P|+\mathcal{O}(P\e^2 Q\e^{-1});\,\eps;\,R\e\Lambda\e^{n/2}\right\},
\quad r\e\ceq (1+P\e Q\e^{-1})^{-1}.
\end{align}
Taking into account that $R\e\to \infty$ and $r\e\to 1$ as $\eps\to 0$, we 
conclude from \eqref{eta:altern}--\eqref{eta:1:altern} that 
the estimate \eqref{th2a:est} gives a better convergence rate than the estimates \eqref{th1:est} and \eqref{th2:est}
if the term $\Lambda\e^{n/2}$ prevails in $\eta\e''$,  i.e., if  
$P\e Q\e^{-1/2}=o(\Lambda\e^{n/2})$, $|P\e-P|=o(\Lambda\e^{n/2})$, and $\eps=o(\Lambda\e^{n/2})$.
 
We illustrate the above conclusion with an example. Let $n\ge 5$, $\gamma\e=\eps^{n-(n-1)s}$ and $d\e=\eps^s$ with $s\in (1,\frac{n}{n-2})$.  
We have $P\e=\varkappa_n>0$ and $Q\e\to \infty$ which corresponds to the dashed bold open interval on Figure~\ref{fig2}.
One has:
\begin{gather*}
\eta\e=\max\left\{ \mu\e\eps^{n-s(n-2)} ;\,\eps;\ \eps^{s-1}\right\},\
\eta\e'=\max\left\{\nu\e\eps^{n-s(n-2)\over 2}; \mu\e\eps^{n-s(n-2)};\,\eps;\ \eps^{s-1}\right\},\\ 
\eta\e''=\max\left\{\nu\eps^{n-s(n-2)\over 2};\, \eps;\ \eps^{(s-1)n\over 2}\right\},
\end{gather*}
where  $\nu\ceq\sqrt{\varkappa_n\over n-2}$, $\nu\e\sim\nu$, $\mu\e\sim \nu^2$.
Evidently, $\eta\e''=o(\eta\e)$ and $\eta\e''=o(\eta\e')$ for $s\in (1,{n+2\over n})$.

\begin{proof}[Proof of Theorem~\ref{th2a}]

As before, let $\HS\e\ceq \L(\Omega\e)$ and $\HS\ceq \L(\Omega)$, and
the forms $\a\e$ and $\a$ in $\HS\e$ and $\HS$ be defined by \eqref{ae} and \eqref{a}, respectively. 
Again we introduce the spaces $\HS\e^1$, $\HS^1$ as in \eqref{H1}, and the space $\HS^2$ as in \eqref{H2}.
Finally, let the operators $\J\e$, $\wt\J\e$ and $\wt\J\e^1$ be defined by \eqref{J}, \eqref{wtJ} and \eqref{wtJ1prop}, respectively, while this time the operator $\J\e^1$ be defined by \eqref{J1:new}. 
Below we demonstrate that the conditions of Theorem~\ref{thA1} hold with $\delta\e = C\eta\e''$.

Recall that
  \eqref{thA1:0} is fulfilled for any $\delta\e\ge 0$. Moreover (see \eqref{cond2:final}), the condition
 \eqref{thA1:2} is fulfilled with $\delta\e=C\delta_{\eps,1}$, where $\delta_{\eps,1}$
is given in \eqref{delta:1}. Also, since now $\J\e^1 f = \J\e f$ for $f\in\dom(\a)$, we conclude that the condition
\eqref{thA1:1} holds with any $\delta\e\ge 0$. 

We examine the fulfilment of the condition \eqref{thA1:3}.
Let $f\in\HS^2$ and $u\in\HS^1\e$. Recall that 
$f\ie$ and $u\ie$ stand for the mean values of $f$ and $\wt\J\e^1 u$ over 
$\square\ie$. We also denote by $\wt f\ie$ and $\wt u\ie$ the mean 
values of $f$ and $\wt\J\e^1 u$ over $\partial D\ie$, i.e.
$$
\wt f\ie\ceq {1\over \varkappa_n d\e^{n-1}}\int_{\partial D\ie}f\d s,\quad
\wt u\ie\ceq {1\over \varkappa_n d\e^{n-1}}\int_{\partial D\ie}\wt\J\e^1 u\d s.
$$
Taking into account the definition of $P\e$ and that 
$V=P$ (this follows from \eqref{spec.case}), we have
\begin{gather*}
\a\e[u,\J^1\e f]-\a[\wt\J^{1}\e u,f]
=-\suml_{i\in\I\e}(\nabla \J\e^1 u,\nabla f)_{\L(D\ie)}+
\gamma\e\suml_{i\in\I\e}(u,f)_{\L(\partial D\ie)}-P(\J\e^1 u,f)_{\L(\Omega)}
=
\suml_{k=1}^7 L_{\eps,k}, 
\end{gather*}
where $\ds L_{\eps,1}=-\suml_{i\in\I\e}(\nabla \J\e^1 u,\nabla f)_{\L(D\ie)}$, and
\begin{gather*}
\begin{array}{lll}
\ds L_{\eps,2}=({ P\e}-{ P})\suml_{i\in\I\e}u\ie\overline{f\ie}\eps^n,
&
\ds L_{\eps,3}=P\e\suml_{i\in\I\e} ({ \wt u\ie} - { u\ie})\overline{\wt f\ie }\eps^n,\\
\ds L_{\eps,4}=P\e\suml_{i\in\I\e}u\ie\overline{({ \wt f\ie} - { f\ie})}\eps^n,&
\ds
L_{\eps,5}=\gamma\e\suml_{i\in\I\e} (u, f-{ \wt f\ie})_{\L(\partial D\ie)},\\
\ds L_{\eps,6}=P\suml_{i\in\I\e} (\J\e^1 u,{ f\ie}- f)_{\L(\square\ie)},
&
\ds L_{\eps,7}=-P\suml_{i\in\I\e}(u,f)_{\L(\Omega\setminus\cup_{i\in\I\e}\square\ie)}.
\end{array}
\end{gather*}
We have already examined the terms $L_{\eps,1}$ and $L_{\eps,7}$ in \eqref{I1:final} and \eqref{I33:final}, respectively:
\begin{gather}\label{L1}
|L_{\eps,1}|\leq C\delta_{\eps,1}\|f\|_{\HS^2}\|u\|_{\HS\e^1},\\
\label{L7}
|L_{\eps,7}|\leq C\eps^2\|f\|_{\HS^1}\|u\|_{\HS\e^1}.
\end{gather}
Using the Cauchy-Schwarz inequality \eqref{CaSw}   and a similar inequality for $u\ie$, one has
\begin{align}\notag
|L_{\eps,2}|&\leq |P\e-P|\left(\suml_{i\in\I\e}\eps^{n}|f\ie|^2\right)^{1/2}
\left(\suml_{i\in\I\e}\eps^{n}|u\ie|^2\right)^{1/2}\\
&\leq |P\e-P|\|f\|_{\L(\Omega)}\|\wt J\e^1 u\|_{\L(\Omega)}
\leq C|P\e-P|\|f\|_{\HS}\|u\|_{\HS^1\e}\label{L2}
\end{align}
(the last step in \eqref{L2} relies on \eqref{wtJ1prop+}). 
Similarly, using the Cauchy-Schwarz inequality for $\wt f\ie$ and
taking into account the definition of $P\e$, we obtain
\begin{align*} \notag
|L_{\eps,3}|\leq P\e 
\left( P\e^{-1}\ga\e\suml_{i\in\I\e}\|f\|_{\L(\partial D\ie)}^2\right)^{1/2}
\left(\suml_{i\in\I\e}\eps^{n}|\wt u\ie - u\ie|^2\right)^{1/2}.
\end{align*}
Note that, due to \eqref{pq+}, $\sup_{\eps\in (0,\eps_0]}Q\e^{-1}<\infty$ if $Q\not =0$ (cf.~\eqref{spec.case}).
Then, using the estimate
\eqref{lemma:crucial:est+} (for $ f$) and the estimate 
\eqref{lemma:tildeg:est1} (for $ \wt\J\e^1 u$),    we obtain
\begin{align}
|L_{\eps,3}|&\leq 
C\, P\e\left(\suml_{i\in\I\e}\left(\|f\|_{\L(Y\ie)}^2+ Q\e^{-1}\|\nabla f\|_{\L(Y\ie)}^2\right)\right)^{1/2}
\left(\left(\eps^2+Q\e^{-1}\right)\suml_{i\in\I\e}\|\nabla  \wt J\e^1 u\|_{\L(\square\ie)}^2\right)^{1/2}
\notag
\\
&\leq C\, P\e (\eps^2+Q\e^{-1})^{1/2} \|f\|_{\H^1(\Omega)}\|\nabla \wt J\e^1 u\|_{\L(\Omega)}
\leq C\, \max\{P\e\eps;\,P\e Q\e^{-1/2}\}  \|f\|_{\HS^1}\|  u\|_{\HS^1\e}\label{L3}
\end{align}
(in the last step we again use  \eqref{wtJ1prop+}). 
Further, using  the estimate \eqref{lemma:tildeg:est1} for $f$, the Cauchy-Schwarz 
inequality for $u\ie$, and \eqref{wtJ1prop+}, we derive
\begin{align} \notag
|L_{\eps,4}|&\leq P\e  
\left(\eps^n\suml_{i\in\I\e}|\wt f\ie- f\ie|^2\right)^{1/2}\left(\suml_{i\in\I\e}\eps^{n}| u\ie|^2\right)^{1/2}
%\\&\leq 
%C\, P\e\left((\eps^2+Q\e^{-1})\suml_{i\in\I\e} \|\nabla f\|_{\L(\square\ie)}^2\right)^{1/2}
%\left( \suml_{i\in\I\e}\|  \wt J\e^1 u\|_{\L(\square\ie)}^2\right)^{1/2}
%\notag
\\
&\leq C\, P\e (\eps^2+Q\e^{-1})^{1/2} \|\nabla f\|_{\L(\Omega)}\| \wt J\e^1 u\|_{\L(\Omega)}
\leq C\, \max\{P\e\eps;\,P\e Q\e^{-1/2}\}\|f\|_{\HS^1}\|  u\|_{\HS^1\e}.\label{L4}
\end{align}
To estimate $L_{\eps,5}$ we utilize  \eqref{wtJ1prop+},
\eqref{lemma:tildeg:est2}  
and
\eqref{lemma:crucial:est+}:
\begin{align}
\notag
|L_{\eps,5}|&\leq 
\left(\gamma\e \suml_{i\in\I\e}\|f- \wt f\ie\|_{\L(\partial D\ie)}^2\right)^{1/2}
\left(\gamma\e\suml_{i\in\I\e}      \|u\|_{\L(\partial D\ie)}^2\right)^{1/2}
\\\notag
&\leq
 C\left(\gamma\e d\e\suml_{i\in\I\e}\|\nabla f\|^2_{\L(D\ie)} \right)^{1/2} 
 \left(P\e\suml_{i\in\I\e}\left(\|u\|_{\L(Y\ie)}+Q\e^{-1}\|\nabla u\|_{\L(Y\ie)}\right)\right)^{1/2}
\\
&\leq
 C\,P\e Q\e^{-1/2}\|f\|_{\HS^1}\|u\|_{\HS\e^1} .
\label{L5}
\end{align}
The last step of the above estimate relies on 
 $\sup_{\eps\in (0,\eps_0]}Q\e^{-1}<\infty$ (for $Q$ as in \eqref{spec.case}), and
the equalities
$$
\gamma\e d\e = P\e Q\e^{-1}(n-2),\ n\ge 3,
\qquad
\gamma\e d\e = P\e Q\e^{-1}|\ln d\e|^{-1},\ n=2
$$
(note   $|\ln d\e|^{-1}< |\ln\eps_0|^{-1}$).
Finally, we estimate $L_{\eps,6}$ via   \eqref{Poincare}
(with $g\ceq f-f\ie$) and \eqref{wtJ1prop+}:
\begin{align}
\notag
|L_{\eps,6}|&\leq 
P\left(\suml_{i\in\I\e}\|f- f\ie\|_{\L(\square\ie)}^2\right)^{1/2}
\left(\suml_{i\in\I\e} \|\wt\J\e u\|_{\L(\square\ie)}^2\right)^{1/2}
\\&
\leq C\eps\|\nabla f\|_{\L(\Omega)}\|\wt \J\e u\|_{\L(\Omega)}\leq
C\eps\|f\|_{\HS^1}\|u\|_{\HS^1\e}.
\label{L6}
\end{align}
\smallskip
Combining \eqref{L1}--\eqref{L6} and taking into account that $\sup_{\eps\in(0,\eps_0)}P\e<\infty$, we arrive at
\begin{gather*}\label{L:all}
|\a\e[u,\J^1\e]-\a[\wt\J^{1}\e u,f]|\leq 
C\max\left\{P\e Q\e^{-1/2};\, |P\e-P|;\ \eps;\ \delta_{\eps,1} \right\}\|f\|_{\HS^2}\|u\|_{\HS\e^1},
\end{gather*}

Taking into account that $\delta_{\eps,1}=\max\{\Lambda\e^{n/2};\, (\eps d\e)^{1/2}\}\leq 
\max\{\Lambda\e^{n/2};\, \eps\}$, we conclude  that
the
conditions \eqref{thA1:0}--\eqref{thA1:3}  are fulfilled with
$ \delta\e=C \eta\e''$, where $\eta\e''$ is defined by \eqref{eta:prime:2}.
Hence, by Theorem~\ref{thA2}, we obtain the estimate
\begin{gather}\label{th2a:est:prelim} 
\left\|(\A\e+\Id)^{-1}\J\e  - \J\e^1(\A+\Id)^{-1}\right\|_{\HS \to \HS^1\e}\le 
C\eta\e''.
\end{gather} 
Since $\|v\|_{\H^1(\Omega\e)}\leq \|v\|_{\HS^1\e}$ and $\J\e^1 f = \J\e f $, 
we immediately get from \eqref{th2a:est:prelim} the required estimate 
\eqref{th2a:est}. Theorem~\ref{thA1+} is proven.
\end{proof}

}

\section*{Acknowledgements}

The work of the first author is partly supported by the Czech Science Foundation (GA\v{C}R) through the project 21-07129S.  The second author gratefully acknowledges financial support by 
Deutsche Forschungsgemeinschaft (DFG, German Research Foundation) -- Project-ID 258734477 -- SFB 1173.

\end{document}